\newif{\ifarxiv}
\newtheorem{thm}{Theorem}[section]
\newtheorem{lemma}[thm]{Lemma}
\newtheorem{proposition}[thm]{Proposition}
\newtheorem{corollary}[thm]{Corollary}
\theoremstyle{definition}
\newtheorem{definition}[thm]{Definition}
\newtheorem{example}[thm]{Example}
\numberwithin{equation}{section}
\newcommand\Sober{\mathcal S}
\newcommand\nat{\mathbb{N}}
\newcommand\lat[2]{{#1} ({#2})}
\newcommand\dc{\mathop{\downarrow}\nolimits}
\newcommand\uuarrow{\rlap{$\uparrow$}\raise.5ex\hbox{$\uparrow$}}
\newcommand\ddarrow{\rlap{$\downarrow$}\raise.5ex\hbox{$\downarrow$}}
\newcommand\Img{\mathop{\mathrm{Im}}\nolimits}
\newcommand\eqdef{\mathrel{\buildrel \text{def}\over=}}
\newcommand\diff{\smallsetminus}
\newcommand\bm[1]{\mathtt{#1}}
\newcommand\bP{\bm{P}}
\newcommand\bQ{\bm{Q}}
\newcommand\bB{\bm{B}}
\newcommand\bA{\bm{A}}
\newcommand\adjsw[1]{{#1}^\circ}
\newcommand\flt[1]{[{#1}]}
\newcommand\rsob{\mathrm{rsob}\;}
\newcommand\supp[1]{\mathrm{supp}\;{#1}}
\newcommand\ls{\leq^{\mathrm{top}}_*}
\begin{document}


\baselineskip=17pt


\title[Infinitary Noetherian Constructions II]{Infinitary Noetherian
  Constructions II. Transfinite Words and the Regular Subword Topology}

\author[Goubault-Larrecq]{Jean Goubault-Larrecq}
\thanks{The first and third authors were supported by grant ANR-17-CE40-0028 of the
  French National Research Agency ANR (project BRAVAS)}
\address{Universit\'e Paris-Saclay, CNRS, ENS Paris-Saclay, Laboratoire M\'ethodes Formelles, 91190, Gif-sur-Yvette, France}
\email{goubault@lsv.fr}

\author[Halfon]{Simon Halfon}
\address{Universit\'e Paris-Saclay, CNRS, ENS Paris-Saclay, Centre
  Borelli, 91190, Gif-sur-Yvette, France.}
\email{simon.halfon@ens-paris-saclay.fr}

\author[Lopez]{Aliaume Lopez}
\address{Universit\'e Paris-Saclay, CNRS, ENS Paris-Saclay,
  Laboratoire M\'ethodes Formelles, 91190, Gif-sur-Yvette, France}
\address{Universit\'e de Paris, IRIF, CNRS, F-75013 Paris, France}
\email{aliaume.lopez@ens-paris-saclay.fr}

\date{}

\begin{abstract}
  We show that the spaces of transfinite words, namely ordinal-indexed
  words, over a Noetherian space, is also Noetherian, under a natural
  topology which we call the regular subword topology.  We
  characterize its sobrification and its specialization ordering, and
  we give an upper bound on its sobrification rank and on its stature.
\end{abstract}

\subjclass[2010]{Primary
54G99; 
Secondary 
06A07, 
06B30
}

\keywords{Noetherian spaces, well-quasi-orders, infinite words}

\maketitle

\section{Introduction}
\label{sec:intro}

Given a well-quasi-order $X$ (wqo, for short), the space of infinite
words $X^\omega$ (of length $\omega$) need not be wqo in the subword
preordering.  One way of correcting this anomaly is to turn to the
stronger notion of better quasi-orderings \cite{NW:bqo}.  Another one
is to turn to the weaker notion of Noetherian space.  Noetherian
spaces are a natural, topological generalization of wqos with many
similar properties \cite[Section~9.7]{JGL-topology}.  For example,
there are Noetherian analogues of Higman's theorem and of Kruskal's
theorem.  Noetherianity is also preserved by some infinitary
constructions such as powerset.

In part~I of this work \cite{JGL:noethinf} we have shown that, given a
Noetherian space $X$, $X^\omega$ is again Noetherian, with a natural
topology, the subword topology.  The same works for the set of
finite-or-infinite words $X^{\leq\omega}$.  The purpose of the present
paper is to extend this to spaces $X^{<\alpha}$ of transfinite words,
namely words indexed by ordinals strictly smaller than a fixed ordinal
bound $\alpha$.

The topology we choose is a natural generalization of
that of part~I.
The bulk of the work consists in showing that if $X$ is Noetherian,
then so is $X^{<\alpha}$.

\paragraph{\emph{Outline.}}  Section~\ref{sec:preliminaries}
recapitulates a few basic notions, and is also where we state our
basic tool for showing that $X^{<\alpha}$ is Noetherian, as
Proposition~\ref{prop:S}.  We define the regular subword topology on
$X^{<\alpha}$ in Section~\ref{sec:transf-sequ}, based on so-called
transfinite products.  After a few basic results on transfinite words
in Section~\ref{sec:comb-transf-words}, we show that the
$\alpha$-products, a specific kind of transfinite products, form an
irredundant subbase of closed subsets of $X^{<\alpha}$, where $\alpha$
is a special kind of ordinal which we call a bound; this is one of the
conditions of Proposition~\ref{prop:S}.  We characterize inclusion of
transfinite products in Section~\ref{sec:incl-transf-prod}, and we
show that every transfinite product has a canonical
form 
in Section~\ref{sec:normal-forms}.  This allows us to show that the
collection of $\alpha$-products is well-founded in
Section~\ref{sec:well-foundedness}; this is another condition of
Proposition~\ref{prop:S}.  The final condition requires us to express
finite intersections of $\alpha$-products as finite unions of
$\alpha$-products, which we do in
Section~\ref{sec:inters-transf-prod}.  In the process, we obtain an
upper bound on the stature and reduced sobrification rank of
$X^{<\alpha}$.  Finally, we describe the specialization preordering of
$X^{<\alpha}$
in Section~\ref{sec:spec-order-xalpha}.

\section{Preliminaries}
\label{sec:preliminaries}

\subsection{Topology.}

Most of the following can be found in \cite{JGL-topology}.

Every topological space has a \emph{specialization} preordering
$\leq$, defined by $x \leq y$ if and only if every open neighborhood
of $x$ contains $y$.  The closure of $\{x\}$ is the \emph{principal
  ideal} $\dc x \eqdef \{y \in X \mid y \leq x\}$.  
We silently consider any topological space as a preordered set
under $\leq$.

The \emph{Alexandroff topology} of a preordering $\leq$ is its family
of upwards-closed sets.  Among the topologies with a given
specialization preordering $\leq$, it is the finest.  The coarsest is
the \emph{upper topology}, whose closed sets are intersections of sets
of the form $\dc E$, $E$ finite; we write $\dc E$ for
$\bigcup_{x \in E} \dc x$.


A \emph{Noetherian} space is a topological space in which every open
set is compact.  We do not require compactness to imply separation.

A strict partial ordering $<$ is \emph{well-founded} if and only if
there is no infinite strictly descending chain.
By a slight abuse of language, we say that a preordering $\leq$ is
well-founded if and only if its strict part ($x < y$ if and only if
$x \leq y$ and $y \not\leq x$) is well-founded. A space is Noetherian
if and only if its set of closed subsets is well-founded under
inclusion.

A closed subset $C$ 
is \emph{irreducible} if and only if $C \neq \emptyset$, and for all
closed sets $F_1$, $F_2$ such that $C \subseteq F_1 \cup F_2$, $C$ is
included in $F_1$ or in $F_2$.  The closure $\dc x$ of every point $x$
is irreducible closed.  The product $C_1 \times C_2$ of two
irreducible closed subsets is irreducible in the product topology.  A
space is \emph{sober} if and only if every irreducible closed subset
is the closure $\dc x$ of a unique point $x$.

An important property is that, in a Noetherian space $X$, every closed
subset is a \emph{finite} union of irreducible closed subsets.

The \emph{sobrification} $\Sober X$ of a topological space $X$ is its
set of irreducible closed subsets, with the sets
$\diamond U \eqdef \{C \in \Sober X \mid C \cap U \neq \emptyset\}$,
$U$ open in $X$, as open sets.  The specialization ordering of $\Sober X$
is inclusion.  $\Sober X$ is sober, and its lattice of open subsets is
order-isomorphic to that of $X$, through 
$U \mapsto \diamond U$.  In particular, $X$ is Noetherian if and only
if $\Sober X$ is.

$\Sober$ defines a functor: for every continuous map
$f \colon X \to Y$, $\Sober f$ maps every $C \in \Sober X$ to the
closure $cl (f [C])$ in $Y$ of $f [C]$.  (We write $f [C]$ for the
direct image $\{f (x) \mid x \in C\}$.)  In particular, $cl (f [C])$
is irreducible closed.

The sober Noetherian spaces can be characterized
order-theoretic\-ally: they are exactly the sets $X$ with a
well-founded preordering $\leq$ such that every finite intersection of
principal ideals can be expressed as a finite union of principal
ideals; the topology of $X$ is the upper topology of $\leq$. Then the
closed subsets are exactly the sets $\dc E$ with $E$ finite.

The following proposition refines this, and will be the core of our
constructions.  A family of sets $\mathcal P$ is \emph{irredundant} if
and only if every element of $\mathcal P$ is irreducible in
$\mathcal P$, namely: no element of $\mathcal P$ is empty, and for all
$P, P_1, P_2 \in \mathcal P$, if $P \subseteq P_1 \cup P_2$ then
$P \subseteq P_1$ or $P \subseteq P_2$.
\begin{proposition}
  \label{prop:S}
  Let $\mathcal P$ be a family of subsets of a set $X$, such that:
  \begin{enumerate}
  \item $\mathcal P$ is well-founded under inclusion;
  \item $X$ can be written as a finite union of elements of $\mathcal
    P$;
  \item for all $P, Q \in \mathcal P$, $P \cap Q$ is a
    finite union of elements of $\mathcal P$.
  \end{enumerate}
  Then $X$, with the coarsest topology that makes every element of
  $\mathcal P$ closed, is Noetherian.
  If $\mathcal P$ is irredundant, its irreducible closed subsets are
  exactly the elements of $\mathcal P$, and $\Sober X$ equals
  $\mathcal P$ with the upper topology of inclusion.
\end{proposition}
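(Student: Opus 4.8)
The plan is to reduce to the case where $\mathcal P$ is irredundant and, in that case, to identify $\Sober X$ with the poset $(\mathcal P,\subseteq)$ equipped with its upper topology, using the order-theoretic characterization of sober Noetherian spaces recalled above; Noetherianity of $X$ will follow. Throughout, let $\mathcal D$ denote the family of all finite unions of members of $\mathcal P$, the empty union $\emptyset$ included. By (2), $X \in \mathcal D$, and by (3) together with distributivity $\mathcal D$ is closed under finite intersections; being obviously closed under finite unions, it is precisely the family of ``basic'' closed sets of the stated topology, so the closed subsets of $X$ are exactly the arbitrary intersections of members of $\mathcal D$. To reduce the general Noetherianity claim to the irredundant case, let $\mathcal P_0$ be the set of members of $\mathcal P$ that are irreducible in the lattice $\mathcal D$ (equivalently, irreducible as closed subsets of $X$). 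A well-founded induction along $\subseteq$ shows that every member of $\mathcal P$ is a finite union of members of $\mathcal P_0$: if $P$ is neither empty nor irreducible then $P = (P\cap A) \cup (P \cap B)$ for some $A, B \in \mathcal D$ with $P \not\subseteq A$ and $P \not\subseteq B$, and the $\mathcal P$-summands of $P \cap A$ and of $P \cap B$ (which lie in $\mathcal D$) are members of $\mathcal P$ strictly below $P$, hence finite unions of members of $\mathcal P_0$ by induction. Consequently $\mathcal P_0$ generates the same $\mathcal D$, hence the same topology, still satisfies (1)--(3), and is irredundant. We may thus assume $\mathcal P$ irredundant, and we prove all three assertions.

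Set $Z \eqdef (\mathcal P, \subseteq)$, a well-founded poset by (1). Given $P, Q \in \mathcal P$, write $P \cap Q = P_1 \cup \dots \cup P_n$ with each $P_i \in \mathcal P$ using (3); irreducibility of the members of $\mathcal P$ — in the form that a member of $\mathcal P$ included in a finite union of members is included in one of them — shows $\dc P \cap \dc Q = \dc P_1 \cup \dots \cup \dc P_n$ as down-sets inside $Z$, a finite union of principal ideals. Hence $Z$ satisfies the hypotheses of the recalled characterization: $Z$ with the upper topology of $\subseteq$ is a sober Noetherian space, whose closed sets are exactly the $\dc E$, $E \subseteq \mathcal P$ finite. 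It remains to identify $\Sober X$ with this $Z$.

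First, each $P \in \mathcal P$ is irreducible closed in $X$: it is nonempty and closed, and if $P \subseteq F_1 \cup F_2$ with $F_1, F_2$ closed, then writing $F_1 = \bigcap_j D_j$, $F_2 = \bigcap_k D'_k$ with $D_j, D'_k \in \mathcal D$ and distributing gives $P \subseteq D_j \cup D'_k \in \mathcal D$ for all $j, k$, so by irreducibility $P \subseteq D_j$ or $P \subseteq D'_k$ for each pair, which forces $P \subseteq F_1$ or $P \subseteq F_2$. Conversely, let $C$ be irreducible closed in $X$; write $C = \bigcap_j D_j$ with $D_j \in \mathcal D$, including $X$ among the $D_j$. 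For each $j$, irreducibility of $C$ applied to the $\mathcal P$-summands of $D_j$ yields a summand $P'_j \in \mathcal P$ with $C \subseteq P'_j \subseteq D_j$, so $C = \bigcap_j P'_j$ and $\mathcal F \eqdef \{P \in \mathcal P \mid C \subseteq P\}$ is nonempty. It is well-founded (a subset of $\mathcal P$) and downward directed — for $P, P' \in \mathcal F$, writing $P \cap P'$ as a finite union of members of $\mathcal P$ and applying irreducibility of $C$ produces a member of $\mathcal F$ below both $P$ and $P'$. A nonempty, well-founded, downward-directed poset has a least element $P_*$, and then $C = \bigcap \mathcal F = P_* \in \mathcal P$. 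Hence the irreducible closed subsets of $X$ are exactly the members of $\mathcal P$, so $\Sober X = \mathcal P = Z$ as sets, with specialization order $\subseteq$ in both cases. Finally, the closed sets $\{C \in \Sober X \mid C \subseteq F\}$ of $\Sober X$, for $F$ closed in $X$, become — on writing $F = \bigcap_j D_j$ with $D_j = \bigcup_i P_{j,i} \in \mathcal D$ and using irreducibility once more — exactly the sets $\bigcap_j \bigcup_i \dc P_{j,i}$, i.e.\ the arbitrary intersections of finite principal down-sets of $Z$; these are the closed sets of the upper topology on $Z$. Thus $\Sober X$ equals $\mathcal P$ with the upper topology of inclusion, and $X$ is Noetherian since $\Sober X$ is.

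The main obstacle is the converse inclusion above: extracting one $\mathcal P$-summand $P'_j$ from each basic closed set $D_j$ containing the given irreducible closed set $C$, and then collapsing the possibly infinite intersection $\bigcap_j P'_j$ to a single member of $\mathcal P$ through the least-element principle for nonempty well-founded downward-directed posets — this is where hypotheses (1) and (3) and irredundancy are all used together. By comparison, the reduction to the irredundant case and the verification that $Z$ is a sober Noetherian space are routine.
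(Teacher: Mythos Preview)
Your proof is correct and follows a genuinely different route from the paper's.

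The paper does not reduce to the irredundant case for the Noetherianity claim. Instead, it constructs for each $x \in X$ the least element $\eta(x) \in \mathcal P$ containing $x$ (which exists by well-foundedness together with condition~(3), no irredundancy needed), shows that $\eta \colon X \to (\mathcal P, \text{upper topology})$ is continuous and \emph{full}, and then invokes two lemmas from Part~I of the series: a full map into a Noetherian space forces the domain to be Noetherian, and every irreducible closed subset of the domain is the preimage of one in the codomain. Irredundancy is used only at the end, to verify that each $P \in \mathcal P$ is itself irreducible in $X$.

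Your argument is self-contained: you first pass to the irreducible subfamily $\mathcal P_0$ by a clean well-founded induction, then identify $\Sober X$ with $(\mathcal P,\subseteq)$ directly via the filter argument (the nonempty well-founded downward-directed set $\mathcal F = \{P \in \mathcal P : C \subseteq P\}$ has a least element, which must equal $C$), and read off Noetherianity of $X$ from that of $\Sober X$. This avoids the external lemmas entirely, at the cost of making Noetherianity depend on the full sobrification computation rather than being established independently. The paper's approach is more modular (it reuses machinery from Part~I and separates the two conclusions cleanly); yours is more elementary and stays within the order-theoretic characterization already recalled in the preliminaries.
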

\begin{proof}
  By assumption, $\mathcal P$ is sober Noetherian in the upper
  topology of $\subseteq$.
  
  For every $x \in X$, since $\mathcal P$ is well-founded
  (property~(1)), there is a minimal element $P$ of $\mathcal P$ that
  contains $x$.  For every $Q \in \mathcal P$ that contains $x$, we
  can write $P \cap Q$ as $\bigcup_{i=1}^n P_i$ where each $P_i$ is in
  $\mathcal P$, by~(3).  Then $x$ is in one $P_i$, and by minimality
  of $P$, $P = P_i$.  It follows that $P = P_i \subseteq P \cap Q$, so
  $P \subseteq Q$.  Hence $P$ is the smallest element of $\mathcal P$
  that contains $x$.  Let us write that element as $\eta (x)$; for
  every $Q \in \mathcal P$, $\eta (x) \subseteq Q$ if and only if
  $x \in Q$.

  This defines a map $\eta \colon X \to \mathcal P$.  For every finite
  subset $E \eqdef \{P_1, \cdots, P_n\}$ of $\mathcal P$,
  $\eta^{-1} (\dc E) = \{x \in X \mid \exists i, \eta (x) \subseteq
  P_i\} = \bigcup_{i=1}^n P_i$ is closed in $X$. Hence $\eta$ is
  continuous.  Taking $n \eqdef 1$, we obtain that every element $P$
  of $\mathcal P$ can be written as $\eta^{-1} (\dc \{P\})$.  Since
  $\eta^{-1}$ commutes with all intersections and (finite) unions,
  $\eta$ is a \emph{full} map, viz.\ every closed subset of $X$ is the
  inverse image of some closed subset of $\mathcal P$---this was
  called an \emph{initial} map in \cite{JGL:noethinf}.  Lemma~8 of
  that paper states that any space from which there is a full map to a
  Noetherian space is itself Noetherian.  Therefore $X$ is Noetherian.

  By Lemma~9 of the same paper, every irreducible closed subset $C$ of
  $X$ is of the form $\eta^{-1} (\mathcal C)$ for some irreducible
  closed subset $\mathcal C$ of $\mathcal P$.  Since $\mathcal P$ is
  sober, $\mathcal C = \dc P$ for some unique $P \in \mathcal P$,
  hence $C = P$.


  Conversely, and assuming $\mathcal P$ irredundant, we claim that
  every element $P$ of $\mathcal P$ is irreducible in $X$.  By
  assumption, $P$ is non-empty.  Since $\eta$ is full, the closed
  subsets of $X$ are exactly the inverse images of closed subsets of
  $\mathcal P$, namely the finite unions of elements of $P$.  Hence it
  suffices to show that if $P$ is included in a finite union of
  elements of $\mathcal P$, then it is included in one of them.  This
  follows directly from the fact that $\mathcal P$ is irredundant.
\end{proof}

\subsection{Ordinals.}

An \emph{indecomposable} ordinal is one of the form $\omega^\beta$,
where $\beta$ is an ordinal; equivalently, an ordinal that cannot be
written as the sum of two strictly smaller ordinals.  The other
ordinals are \emph{decomposable}.  For all ordinals $\alpha$, $\beta$,
$\gamma$, we have:
\begin{enumerate*}
\item if $\gamma$ is indecomposable and
  $\alpha, \beta < \gamma$, then $\alpha+\beta < \gamma$;
\item if $\gamma$ is indecomposable,
  $\alpha < \gamma$, and $\beta \leq \gamma$, then
  $\alpha+\beta \leq \gamma$;
\item every ordinal $\alpha$ can be written in a unique way as a
  finite sum of indecomposable ordinals $\gamma_1 + \cdots + \gamma_m$
  (its \emph{Cantor normal form}), where
  $\alpha \geq \gamma_1 \geq \cdots \geq \gamma_m$;
\item the ordering on Cantor normal forms is lexicographic: $\gamma_1
  + \cdots + \gamma_m < \gamma'_1 + \cdots + \gamma'_n$ (where both
  sides are in Cantor normal form) if and only if for some $i \geq 1$,
  $\gamma_1=\gamma'_1$, \ldots,
  $\gamma_{i-1}=\gamma'_{i-1}$ and either $i=m+1 \leq n$, or $i \leq m, n$ and
  $\gamma_i < \gamma'_i$.
\end{enumerate*}

\section{Transfinite words}
\label{sec:transf-sequ}

We call \emph{transfinite word} on a set $X$ any map from $\alpha$ to
$X$, where $\alpha$ is any ordinal.  Such a word $w$ has \emph{length}
$|w| \eqdef \alpha$.  Seen as the set of ordinals $\beta$ strictly
less than $\alpha$, $|w|$ is also the \emph{domain} of $w$, and we
write $\lat w \beta$ for the letter at position $\beta$ in $w$, for
every $\beta \in |w|$ (equivalently, $\beta < |w|$).

When $X$ is preordered by $\leq$,
the \emph{subword preordering} $\leq$ is defined on transfinite words
by $w \leq_* w'$ if and only if there is a strictly increasing map
$f \colon |w| \to |w'|$ such that
$\lat w \beta \leq \lat {w'} {f (\beta)}$ for every $\beta$,
$0\leq \beta < |w|$.
We also say that $f$ \emph{exhibits} $w$ as a subword of $w'$.

We write $X^{<\alpha}$ for the set of transfinite words $w$ of length
$|w| < \alpha$.  $X^{\leq\alpha}$ denotes $X^{<\alpha+1}$.  For
example, $X^{<\omega}$ is the set $X^*$ of finite words on $X$, and
$X^{<\omega+1} = X^{\leq\omega}$ is the set of finite-or-infinite
words studied in Part~I \cite{JGL:noethinf}.

The \emph{concatenation} $ww'$ of $w$ and $w'$ is the 
transfinite word of length $|w|+|w'|$ such that
$\lat {(ww')} \beta = \lat w \beta$ for every $\beta < |w|$, and
$\lat {(ww')} {|w|+\beta} = \lat {w'} \beta$ for every $\beta < |w'|$.
We write $AB$ for the set $\{ww' \mid w \in A, w' \in B\}$.

We are interested in the following topology.  We cannot work on the
(proper) class of all transfinite words over $X$, for foundational
reasons.  Instead we work on \emph{sets} $Y$ of transfinite words;
this leads us to take intersections with $Y$ here and there.  Usually,
$Y$ will be a set of the form $X^{<\alpha}$.
\begin{definition}[Regular subword topology]
  \label{defn:subword:topo}
  The \emph{regular subword topology} on any set of transfinite words
  $Y$ on a space $X$ is the coarsest topology that makes the sets
  $(F_1^{<\alpha_1} F_2^{<\alpha_2} \allowbreak \cdots \allowbreak
  F_n^{<\alpha_n}) \cap Y$ closed, where $n \in \nat$, $F_1$, $F_2$,
  \ldots, $F_n$ are closed subsets of $X$, and $\alpha_1$, $\alpha_2$,
  \ldots, $\alpha_n$ are ordinals.
\end{definition}

The following class of ordinals will be ubiquitous.
\begin{definition}[Bound]
  \label{defn:bound}
  A \emph{bound} is an ordinal of the form $\omega^\beta$ or
  $\omega^\beta+1$, $\beta \geq 0$.  The \emph{trivial bound} is
  $\omega^0$ ($=1$), all others are \emph{non-trivial}.  A
  \emph{proper bound} is one of the form $\omega^\beta$ or
  $\omega^\beta+1$ with $\beta \geq 1$.
\end{definition}

\begin{definition}[Preatom, atom, product]
  \label{defn:product}
  Let $X$ be a topological space.  A \emph{preatom} is
  an expression of the form $F^{<\gamma}$, where $F$ is a closed
  subset of $X$ and $\gamma$ is a bound.
  An \emph{atom} is a preatom $F^{< \gamma}$ such that $\gamma$ is
  non-trivial, and if $\gamma = \omega^0+1$ then $F$ is irreducible
  closed in $X$.

  A \emph{transfinite product} $P$ is any set of the form
  $A_1 A_2 \cdots A_n$, where $n \in \nat$ and each $A_i$ is an atom.
  We write $\varepsilon$ when $n=0$, namely
  $\varepsilon \eqdef \{\epsilon\}$.
\end{definition}
When $\beta=0$, $F^{<\omega^0}=\varepsilon = \{\epsilon\}$, and
$F^{<\omega^0+1}$ is sometimes written as $F^?$: that is the set of
words of length at most $1$, whose only letter if any is in $F$.

\begin{lemma}
  \label{lemma:F:split}
  For every set $F$, for every ordinal $\alpha$, one can write
  $F^{<\alpha}$ as
  $F^{<\gamma_1} F^{<\gamma_2} \allowbreak \cdots \allowbreak
  F^{<\gamma_m}$, where $m \in \nat$ and each $\gamma_i$ is a
  non-trivial bound.
\end{lemma}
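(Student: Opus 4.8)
The plan is to prove the statement by induction on $\alpha$, stripping off the leading term of the Cantor normal form at each step, after first isolating one elementary fact about concatenation: for every ordinal $\alpha$ and every ordinal $\beta \geq 1$, one has $F^{<\alpha+\beta} = F^{\leq\alpha} F^{<\beta}$, where $F^{\leq\alpha}$ abbreviates $F^{<\alpha+1}$. The inclusion $\supseteq$ is immediate: if $|w_1| \leq \alpha$ and $|w_2| < \beta$ then $|w_1 w_2| = |w_1| + |w_2| \leq \alpha + |w_2| < \alpha + \beta$, using weak monotonicity of ordinal addition in the left argument and strict monotonicity in the right argument. For $\subseteq$, take $w$ with $|w| < \alpha + \beta$: if $|w| \leq \alpha$ then $w = w\epsilon$ with $\epsilon \in F^{<\beta}$ (this is where $\beta \geq 1$ enters); otherwise $|w| = \alpha + \delta$ for a unique $\delta$, which satisfies $\delta < \beta$, and cutting $w$ right after position $\alpha$ writes it as $w_1 w_2$ with $|w_1| = \alpha$, hence $w_1 \in F^{\leq\alpha}$, and $|w_2| = \delta < \beta$. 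It is worth noting the asymmetry here: one must keep $F^{\leq\alpha}$, not $F^{<\alpha}$, on the left, since otherwise every length in the interval $[\alpha, \alpha+\beta)$ would be unreachable.

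Granting this, I proceed by induction on $\alpha \geq 1$; the degenerate case $\alpha = 0$, where $F^{<0} = \emptyset$, is set aside. If $\alpha = 1$, then $F^{<1} = \varepsilon$ and $m \eqdef 0$ works. If the Cantor normal form of $\alpha$ is a single indecomposable ordinal $\omega^{\beta_1}$ with $\beta_1 \geq 1$, then $\alpha$ is itself a non-trivial bound and we take $m \eqdef 1$, $\gamma_1 \eqdef \alpha$. Otherwise the Cantor normal form of $\alpha$ has at least two terms, so $\alpha = \omega^{\beta_1} + \delta$ where $\delta$ is the same normal form with its leading term deleted; by the lexicographic description of the ordering on Cantor normal forms recalled above, $1 \leq \delta < \alpha$. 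The peeling identity, applied with $\beta \eqdef \delta$, gives $F^{<\alpha} = F^{\leq\omega^{\beta_1}} F^{<\delta} = F^{<\omega^{\beta_1}+1} F^{<\delta}$, and $\omega^{\beta_1}+1$ is a non-trivial bound, being of the form $\omega^\beta+1$ and distinct from $\omega^0 = 1$. By the induction hypothesis applied to $\delta$, $F^{<\delta} = F^{<\gamma_2} \cdots F^{<\gamma_m}$ with each $\gamma_j$ a non-trivial bound, so setting $\gamma_1 \eqdef \omega^{\beta_1}+1$ completes the step. Unwinding the recursion turns the Cantor normal form $\alpha = \omega^{\beta_1} + \cdots + \omega^{\beta_k}$ into $F^{<\alpha} = F^{<\omega^{\beta_1}+1} F^{<\omega^{\beta_2}+1} \cdots F^{<\omega^{\beta_{k-1}}+1} F^{<\omega^{\beta_k}}$, where the last factor $F^{<\omega^{\beta_k}}$ is dropped (it equals $\varepsilon$) exactly when $\beta_k = 0$.

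The argument is short, and the only step that needs genuine care is pinning down the precise shape of the peeling identity — in particular the $F^{\leq\alpha}$-versus-$F^{<\alpha}$ distinction — together with the small observation that $\omega^{\beta_i}+1$ is always a non-trivial bound. Everything else is routine: ordinal subtraction, monotonicity of ordinal addition, and the bookkeeping of Cantor normal forms; and the whole statement is purely combinatorial, requiring no hypothesis on $F$ beyond being a set.
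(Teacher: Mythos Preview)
Your proof is correct and takes essentially the same route as the paper: both use the Cantor normal form $\alpha = \omega^{\beta_1} + \cdots + \omega^{\beta_k}$ to reach the identical decomposition $F^{<\alpha} = F^{<\omega^{\beta_1}+1} \cdots F^{<\omega^{\beta_{k-1}}+1} F^{<\omega^{\beta_k}}$ (with the last factor dropped when $\beta_k=0$). The only difference is presentational---you isolate the binary peeling identity $F^{<\alpha+\beta}=F^{\leq\alpha}F^{<\beta}$ and induct, whereas the paper proves the $m$-fold identity in one shot via the lexicographic comparison of normal forms---and you are in fact more careful than the paper about $\alpha=0$, where the statement as written is false since $F^{<0}=\emptyset$ while every product of the required form contains $\epsilon$.
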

\begin{proof}
  If $\alpha=0$, we take $m \eqdef 0$.  Otherwise, let us write
  $\alpha$ in Cantor normal form, as a finite sum
  $\gamma_1 + \cdots + \gamma_m$ of indecomposable ordinals, where
  $m \neq 0$ and $\alpha \geq \gamma_1 \geq \cdots \geq \gamma_m$.  We
  claim that $F^{< \alpha}$ is equal to
  $F^{\leq \gamma_1} F^{\leq \gamma_2} \cdots F^{\leq \gamma_{m-1}}
  F^{<\gamma_m}$.  The result will follow, since the latter is equal
  to
  $F^{< \gamma_1+1} F^{< \gamma_2+1} \cdots F^{< \gamma_{m-1}+1} F^{<
    \gamma_m}$: when $\gamma_m \neq 1$, all the superscripts are
  non-trivial bounds; when $\gamma_m = 1$, this simplifies to
  $F^{< \gamma_1+1} F^{< \gamma_2+1} \allowbreak \cdots \allowbreak
  F^{< \gamma_{m-1}+1}$, where all superscripts are non trivial
  bounds, too.

  $F^{\leq \gamma_1} 
  \cdots F^{\leq \gamma_{m-1}} F^{<\gamma_m}$ is included in
  $F^{< \alpha}$.  Conversely, let $w \in F^{< \alpha}$, and let us
  write $|w|$ in Cantor normal form as
  $\gamma'_1 + \cdots + \gamma'_n$.  We can write $w$ as a
  concatenation $w_1 \cdots w_n$ where $|w_1|=\gamma'_1$, \ldots,
  $|w_n|=\gamma'_n$. Since $|w| < \alpha$, there is a number
  $i \geq 1$ such that $\gamma'_1=\gamma_1$, \ldots,
  $\gamma'_{i-1}=\gamma_{i-1}$ and either $i=n+1 \leq m$, or
  $i \leq m, n$ and $\gamma'_i < \gamma_i$. In the first case,
  $w \in F^{\leq \gamma_1} \cdots F^{\leq \gamma_n} \subseteq F^{\leq
    \gamma_1} 
  \cdots F^{\leq \gamma_{m-1}} F^{<\gamma_m}$.  In the second case,
  since
  $\gamma_i > \gamma'_i \geq \gamma'_{i+1} \geq \cdots \geq \gamma'_n$
  and $\gamma_i$ is indecomposable,
  $\gamma'_i + \cdots + \gamma'_n < \gamma_i$.  Then
  $w_1 \in F^{\leq \gamma_1}$, \ldots,
  $w_{i-1} \in F^{\leq \gamma_{i-1}}$, and $w_i w_{i+1} \cdots w_n$ is
  in $F^{<\gamma_i}$, hence in
  $F^{\leq \gamma_i} \cdots F^{\leq \gamma_{m-1}} F^{< \gamma_m}$.
\end{proof}

\begin{proposition}
  \label{prop:subword:topo}
  The regular subword topology on any set of transfinite words $Y$ on
  a Noetherian space $X$ is the coarsest topology that has the
  intersections of $Y$ with transfinite products as closed sets.
\end{proposition}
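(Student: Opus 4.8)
\emph{Proof proposal.} The plan is to prove that the two topologies on $Y$ coincide by showing that each one refines the other. Write $\tau_{\mathrm{sub}}$ for the regular subword topology and $\tau_{\mathrm{prod}}$ for the coarsest topology whose closed sets include every $P \cap Y$ with $P$ a transfinite product.

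One inclusion is immediate. A transfinite product $P = A_1 \cdots A_n$ has each atom $A_i$ of the shape $F_i^{<\gamma_i}$ with $F_i$ closed in $X$ and $\gamma_i$ a (non-trivial) bound, in particular an ordinal, so $P = F_1^{<\gamma_1} \cdots F_n^{<\gamma_n}$ is literally one of the generating closed sets of Definition~\ref{defn:subword:topo}. Hence $P \cap Y$ is $\tau_{\mathrm{sub}}$-closed for every transfinite product $P$, and since $\tau_{\mathrm{prod}}$ is the coarsest topology with that property, $\tau_{\mathrm{prod}}$ is coarser than $\tau_{\mathrm{sub}}$.

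For the reverse inclusion I would check that each generating closed set $(F_1^{<\alpha_1} \cdots F_n^{<\alpha_n}) \cap Y$ of $\tau_{\mathrm{sub}}$ is $\tau_{\mathrm{prod}}$-closed; this suffices because the $\tau_{\mathrm{prod}}$-closed sets are stable under finite unions and arbitrary intersections. First apply Lemma~\ref{lemma:F:split} to each factor $F_i^{<\alpha_i}$ to rewrite the whole word-set as a concatenation $G_1^{<\delta_1} \cdots G_k^{<\delta_k}$ of finitely many preatoms, each $\delta_j$ a non-trivial bound (with the degenerate case $k=0$ giving $\varepsilon$ itself, which is a transfinite product). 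Then observe that every preatom $G^{<\delta}$ with $\delta$ a non-trivial bound is a finite union of transfinite products: if $G = \emptyset$ it equals $\varepsilon$; if $\delta \neq \omega^0+1$ it is already an atom; and if $\delta = \omega^0+1$ with $G \neq \emptyset$, then Noetherianity of $X$ lets us write $G = C_1 \cup \cdots \cup C_r$ with $r \geq 1$ and each $C_j$ irreducible closed, so $G^? = C_1^? \cup \cdots \cup C_r^?$ is a finite union of atoms. Distributing concatenation over these finite unions turns $F_1^{<\alpha_1} \cdots F_n^{<\alpha_n}$ into a finite union of transfinite products, and distributing $(-) \cap Y$ over that union exhibits $(F_1^{<\alpha_1} \cdots F_n^{<\alpha_n}) \cap Y$ as a finite union of sets $P \cap Y$ with $P$ a transfinite product, hence as a $\tau_{\mathrm{prod}}$-closed set. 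Therefore $\tau_{\mathrm{sub}}$ is coarser than $\tau_{\mathrm{prod}}$, and the two topologies coincide.

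The only genuinely non-formal step is the reduction of the preatoms $F^? = F^{<\omega^0+1}$ with $F$ reducible (together with the edge case $F = \emptyset$) to finite unions of atoms, which is exactly where Noetherianity of $X$ enters; everything else reduces to Lemma~\ref{lemma:F:split}, associativity and distributivity of concatenation over finite unions, and the closure properties of the closed sets of a topology.
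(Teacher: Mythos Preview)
Your argument is correct and mirrors the paper's proof: reduce each factor to preatoms with non-trivial bounds via Lemma~\ref{lemma:F:split}, replace each $F^?$ by a finite union of $C^?$'s using Noetherianity of $X$, and distribute concatenation over the resulting finite unions. The only detail you elide that the paper makes explicit is the degenerate case where some $\alpha_i=0$, so that $F_1^{<\alpha_1}\cdots F_n^{<\alpha_n}=\emptyset$; this should be disposed of before invoking Lemma~\ref{lemma:F:split}, but otherwise the two arguments coincide.
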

\begin{proof}
  Let us consider a set of the form
  $F_1^{<\alpha_1} F_2^{<\alpha_2} \cdots F_n^{<\alpha_n}$, where
  $F_1$, \ldots, $F_n$ are closed subsets of $X$, and $\alpha_1$,
  $\alpha_2$, \ldots, $\alpha_n$ are ordinals.  We claim that we can
  rewrite it as a finite union of transfinite products.

  If some $\alpha_i$ equals $0$, then
  $F_1^{<\alpha_1} F_2^{<\alpha_2} \cdots F_n^{<\alpha_n}$ is empty.
  Otherwise, using Lemma~\ref{lemma:F:split}, we may assume that every
  $\alpha_i$ is a non-trivial bound.  We may also remove the preatoms
  $F_i^{<\alpha_i}$ such that $F_i = \emptyset$, since in that case
  $F_i^{\alpha_i} = \varepsilon$.  Let $I$ be the subset of those
  indices $i$, $1\leq i\leq n$, such that $\alpha_i = \omega^0+1$.
  For each $i \in I$, we can write $F_i$ as a finite union of
  irreducible closed subsets $C_{i1}$, \ldots, $C_{ik_i}$ (and
  $k_i \neq 0$ since $F_i \neq \emptyset$), since $X$ is Noetherian.
  For every function $f$ mapping each $i \in I$ to an element of
  $\{1, \cdots, k_i\}$, let $P_f$ be the transfinite product obtained
  from $F_1^{<\alpha_1} F_2^{<\alpha_2} \cdots F_n^{<\alpha_n}$ by
  replacing each preatom $F_i^{<\alpha_i}$, $i \in I$, by
  $C_{if (i)}^?$.  Then
  $F_1^{<\alpha_1} F_2^{<\alpha_2} \cdots F_n^{<\alpha_n}$ is the
  finite union of the transfinite products $P_f$, when $f$ varies over
  the finitely many possible functions.
\end{proof}


\section{Elementary combinatorics on transfinite words}
\label{sec:comb-transf-words}

\begin{lemma}
  \label{lemma:bounds}
  Let $\gamma$, $\gamma'$ be two bounds, and $u$ and $v$ be
  transfinite words.
  \begin{enumerate}
  \item If $\gamma$ is indecomposable and $|u|, |v| < \gamma$, then
    $|uv| < \gamma$.
  \item If $\gamma < \gamma'$ and $|u| < \gamma$, $|v| < \gamma'$,
    then $|uv| < \gamma'$.
  \item If $\gamma$ is proper and $|u|<\gamma$, then for every
    $x \in X$, $|xw| < \gamma$.
  \end{enumerate}
\end{lemma}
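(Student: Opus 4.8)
The plan is to reduce all three items to ordinal arithmetic. By the definition of concatenation, $|uv| = |u| + |v|$, and prepending a single letter gives $|xu| = 1 + |u|$, so each statement becomes an inequality between sums of ordinals, to which I will apply the two facts about indecomposable ordinals recalled in Section~\ref{sec:preliminaries}: for indecomposable $\gamma$, (i) $\alpha,\beta < \gamma$ implies $\alpha + \beta < \gamma$, and (ii) $\alpha < \gamma$, $\beta \leq \gamma$ implies $\alpha + \beta \leq \gamma$. The one structural remark I need about bounds is that among the bounds $\omega^\delta$ and $\omega^\delta+1$ (for $\delta \geq 0$), exactly the $\omega^\delta$ are indecomposable (each $\omega^\delta+1$ splits as $\omega^\delta + 1$ with both summands strictly smaller), and that any bound $\gamma'$ is either $\omega^{\beta'}$ or of the form $\omega^{\beta'}+1$, in which latter case $\alpha < \gamma'$ is equivalent to $\alpha \leq \omega^{\beta'}$.

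For~(1): since $\gamma$ is a bound and indecomposable, it must be $\omega^\beta$ for some $\beta$, and then $|u|,|v| < \gamma$ gives $|uv| = |u| + |v| < \gamma$ directly by~(i). For~(2): write $\gamma'$ as $\omega^{\beta'}$ or $\omega^{\beta'}+1$. In the first case $\gamma'$ is indecomposable, $|u| < \gamma < \gamma' = \omega^{\beta'}$ and $|v| < \omega^{\beta'}$, so $|uv| < \omega^{\beta'} = \gamma'$ again by~(i). In the second case, $\gamma < \gamma' = \omega^{\beta'}+1$ forces $\gamma \leq \omega^{\beta'}$, hence $|u| < \omega^{\beta'}$, while $|v| < \omega^{\beta'}+1$ gives $|v| \leq \omega^{\beta'}$; then~(ii) yields $|uv| = |u| + |v| \leq \omega^{\beta'} < \gamma'$.

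For~(3): $\gamma$ proper means $\gamma = \omega^\beta$ or $\gamma = \omega^\beta + 1$ with $\beta \geq 1$, so in particular $1 < \omega^\beta$. I then rerun the case analysis of~(2) with $1$ in place of $|u|$ and $|u|$ in place of $|v|$: if $\gamma = \omega^\beta$, then $1 < \omega^\beta$ and $|u| < \omega^\beta$ give $|xu| = 1 + |u| < \omega^\beta = \gamma$ by~(i); if $\gamma = \omega^\beta + 1$, then $|u| < \gamma$ gives $|u| \leq \omega^\beta$, and~(ii) gives $|xu| = 1 + |u| \leq \omega^\beta < \gamma$. There is no genuinely hard step; the only things to watch are the bookkeeping between strict and non-strict inequalities (notably the passage from $\gamma < \omega^{\beta'}+1$ to $\gamma \leq \omega^{\beta'}$) and remembering why properness, not mere non-triviality, is needed in~(3) --- for the non-trivial bound $\gamma = \omega^0+1 = 2$ and $|u| = 1$ one would get $|xu| = 2 \not< 2$, so $1 < \omega^\beta$ really must hold.
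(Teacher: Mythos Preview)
Your proof is correct and follows essentially the same approach as the paper's: the same reduction to ordinal arithmetic via $|uv|=|u|+|v|$ and the same case split on whether the larger bound is of the form $\omega^{\beta'}$ or $\omega^{\beta'}+1$. The only cosmetic difference is in~(3), where the paper simply observes $|x|=\omega^0<\gamma$ (since $\gamma$ is proper) and invokes~(2), whereas you rerun the case analysis by hand.
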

\begin{proof}
  (1) By definition of indecomposability, using $|uv| = |u| + |v|$.

  (2) follows from (1) if $\gamma'$ is indecomposable.  Otherwise, let
  us write $\gamma'$ as $\omega^\beta + 1$.  Then $\gamma < \gamma'$
  means $\gamma \leq \omega^\beta$.  Since $|u| < \gamma$,
  $|u| < \omega^\beta$ and since $|v| < \gamma'$,
  $|v| \leq \omega^\beta$.  Hence
  $|uv| = |u| + |v| \leq \omega^\beta < \gamma'$, because
  $\omega^\beta$ is indecomposable.

  (3) Since $\gamma$ is proper, we have $|x| = \omega^0 < \gamma$;
  then (3) follows from (2).
\end{proof}

\begin{lemma}
  \label{lemma:prod}
    Every closed set in the regular subword topology is
    downwards-closed with respect to $\leq_*$.
\end{lemma}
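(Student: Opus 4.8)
The plan is to peel the regular subword topology back to its generating family. By Definition~\ref{defn:subword:topo}, a subset of $Y$ is closed exactly when it is $Y$ itself or an arbitrary intersection of finite unions of sets $(F_1^{<\alpha_1} \cdots F_n^{<\alpha_n}) \cap Y$ with each $F_i$ closed in $X$ and each $\alpha_i$ an ordinal. The $\leq_*$-downwards-closed subsets of $Y$ form a family that contains $Y$ and is stable under arbitrary unions and arbitrary intersections, so it suffices to show that each subbasic closed set is downwards-closed; since intersecting with $Y$ clearly preserves this, it is in fact enough to prove that every set of the form $F_1^{<\alpha_1} \cdots F_n^{<\alpha_n}$, regarded as a set of transfinite words over $X$, is downwards-closed under $\leq_*$. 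Note that this reduction does not need $X$ to be Noetherian (so we do not invoke Proposition~\ref{prop:subword:topo}).

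For the core claim I would argue directly. Take $w' \in F_1^{<\alpha_1} \cdots F_n^{<\alpha_n}$, fix a factorization $w' = w'_1 \cdots w'_n$ with $w'_i \in F_i^{<\alpha_i}$, and take $w \leq_* w'$ witnessed by a strictly increasing map $f \colon |w| \to |w'|$. The idea is to transport the factorization of $w'$ back along $f$. The positions of $w'$ split into the consecutive blocks $B_i \eqdef [\sigma_{i-1},\sigma_i)$, where $\sigma_i = |w'_1| + \cdots + |w'_i|$ and $\sigma_n = |w'|$, with $w'_i$ the restriction of $w'$ to $B_i$. Since $f$ is strictly increasing and the $B_i$ are consecutive intervals, each $f^{-1}(B_i)$ is an order-convex subset of $|w|$, and the $f^{-1}(B_i)$ partition $|w|$ into consecutive pieces (they exhaust $|w|$ because $f(\beta) < |w'| = \sigma_n$ for every $\beta < |w|$). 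Let $w_i$ be the corresponding factor of $w$, so that $w = w_1 \cdots w_n$ and $|w_1| + \cdots + |w_n| = |w|$.

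Then I would verify $w_i \in F_i^{<\alpha_i}$ on two counts. Every letter of $w_i$ is $\lat w \beta$ for some $\beta$ with $f(\beta) \in B_i$, so $\lat w \beta \leq \lat {w'} {f(\beta)} \in F_i$; as $F_i$ is closed, hence downwards-closed for the specialization ordering, $\lat w \beta \in F_i$. For the length, $f$ restricts to a strictly increasing map from $|w_i|$ (viewed as an ordinal, after reindexing $f^{-1}(B_i)$ from $0$) into $B_i$, which has order type $|w'_i|$; a strictly increasing map between ordinals cannot strictly increase them, so $|w_i| \leq |w'_i| < \alpha_i$. Hence $w = w_1 \cdots w_n \in F_1^{<\alpha_1} \cdots F_n^{<\alpha_n}$, as desired.

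The only genuinely delicate point is the position bookkeeping in the middle step: one must be sure that pulling the block boundaries of $w'$ back through $f$ really produces a concatenation $w_1 \cdots w_n$ of consecutive factors of $w$ with $|w_1| + \cdots + |w_n| = |w|$, i.e.\ that the sets $f^{-1}(B_i)$ are order-convex and cover $|w|$. This is exactly where $f$ being strictly increasing (for order-convexity and the length bound) and $f(\beta) < \sigma_n$ for all $\beta$ (for coverage) are used; once that is nailed down, the checks on letters and on lengths are immediate from $F_i$ being downwards-closed and from the basic fact about strictly increasing maps of ordinals.
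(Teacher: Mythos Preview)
Your proof is correct and follows essentially the same approach as the paper's: reduce to the subbasic closed sets and, given $w \leq_* w'$ with $w'$ in a product, pull back a factorization of $w'$ along the witnessing embedding $f$ to obtain a matching factorization of $w$. The only cosmetic difference is that the paper handles the binary case $AB$ (showing that the concatenation of two $\leq_*$-downwards-closed sets is $\leq_*$-downwards-closed, via the same pullback of a splitting) and then concludes by induction on $n$, whereas you carry out the $n$-fold pullback in one pass and verify $w_i \in F_i^{<\alpha_i}$ directly; both arguments rest on the same two observations about letters (closed $F_i$ is specialization-downwards-closed) and lengths (a strictly increasing map of ordinals does not increase order type).
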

\begin{proof}
  Given any two downwards-closed subsets $A$ and $B$ with respect to
  $\leq_*$, their product $AB$ is, too: if $w \leq_* w'$ and
  $w' \in AB$, then we can write $w'$ as $u'v'$ where $u' \in A$ and
  $v' \in B$, and it is then easy to show that $w = uv$ for some
  $u \leq_* u'$ and $v \leq_* v'$.  It is clear that every atom is
  downwards-closed, as well as $\varepsilon$, so every transfinite
  product is downwards-closed, hence also every intersection of finite
  unions of transfinite products.
\end{proof}

For every indecomposable ordinal $\gamma$, there is a so-called
\emph{Hessenberg pairing} map
$H \colon \gamma \times \gamma \to \gamma$, which is injective; for
all $\alpha < \beta < \gamma$ and $\delta < \gamma$,
$H (\alpha, \delta) < H (\beta, \delta)$ and
$H (\delta, \alpha) < H (\delta, \beta)$
\cite[Exercise~2.23~(ii)]{Levy:basicset},
and it is easy to see that
$H (\alpha, 0) \geq \alpha$ for every $\alpha < \gamma$.

\begin{lemma}
  \label{lemma:stutter}
  Let $F$ be a non-empty subset of a set $X$.  For every transfinite
  word $w$ on $X$ such that $|w|$ is an indecomposable ordinal
  $\gamma$, and whose letters are in $F$, there is a word $w'$ of
  length $\gamma$, whose letters are in $F$ again, such that for every
  way of writing $w'$ as a concatenation $uv$ with $|u| < \gamma$,
  $w \leq_* v$.
\end{lemma}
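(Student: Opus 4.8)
The statement asks, given a word $w$ of indecomposable length $\gamma$ with letters in a nonempty set $F$, for a "stuttered" word $w'$ of the same length $\gamma$, letters still in $F$, such that chopping off any proper prefix $u$ (with $|u| < \gamma$) leaves a suffix $v$ that still contains $w$ as a subword. The idea is to interleave $\gamma$-many copies of $w$ so densely that no "small" prefix can destroy all of them: we want a single copy of $w$ to survive inside every long-enough suffix. I would use the Hessenberg pairing map $H \colon \gamma \times \gamma \to \gamma$ to lay out these copies.

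First I would define $w'$ by specifying its letter at each position $\xi < \gamma$. Since $H$ is injective, each $\xi$ either lies in the image of $H$ or not. For $\xi = H(\alpha,\delta)$ with $\alpha,\delta < \gamma$, set $\lat{w'}{\xi} \eqdef \lat w \alpha$ — so the $\delta$-th "copy" of $w$ has its $\alpha$-th letter deposited at position $H(\alpha,\delta)$. For $\xi$ not in the image of $H$, set $\lat{w'}{\xi}$ to be any fixed element of $F$ (here I use $F \neq \emptyset$). Then all letters of $w'$ are in $F$, and $|w'| = \gamma$. The key monotonicity property I need is: for fixed $\delta$, the map $\alpha \mapsto H(\alpha,\delta)$ is strictly increasing, so the positions carrying copy $\delta$ appear in the right order to exhibit $w$ as a subword of that copy.

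Now take any decomposition $w' = uv$ with $|u| < \gamma$; I must find one surviving copy. The suffix $v$ occupies positions $\xi$ with $|u| \le \xi < \gamma$. Since $\gamma$ is indecomposable and $|u| < \gamma$, I want some $\delta < \gamma$ such that for \emph{all} $\alpha < \gamma$, $H(\alpha,\delta) \ge |u|$ — equivalently $H(0,\delta) \ge |u|$, using that $\alpha \mapsto H(\alpha,\delta)$ is increasing. This is where I need a second property of the Hessenberg pairing beyond what the excerpt explicitly states for the first coordinate: that $\delta \mapsto H(0,\delta)$ is strictly increasing and hence $H(0,\delta) \ge \delta$ (the same easy argument giving $H(\alpha,0)\ge\alpha$ works symmetrically, since $H(0,\cdot)$ is strictly increasing from $\gamma$ to $\gamma$). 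So choosing $\delta \eqdef |u|$ gives $H(0,\delta) \ge |u|$, and then every position $H(\alpha,\delta)$, $\alpha<\gamma$, lies in $v$. Reading off the letters of copy $\delta$ from $v$ via the strictly increasing map $\alpha \mapsto H(\alpha,\delta) - |u|$ (ordinal subtraction, well-defined since $H(\alpha,\delta)\ge|u|$) exhibits $w$ as a subword of $v$, since $\lat{v}{H(\alpha,\delta)-|u|} = \lat{w'}{H(\alpha,\delta)} = \lat w \alpha$.

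\textbf{Main obstacle.} The delicate point is the bookkeeping with ordinal subtraction and the verification that $\alpha \mapsto H(\alpha,\delta) - |u|$ is genuinely strictly increasing $\gamma \to |v|$ — this needs that $H(\cdot,\delta)$ is strictly increasing (stated) and that all its values are $\ge |u|$ (which I get from the choice $\delta = |u|$ together with $H(0,\delta) \ge \delta$, i.e.\ the symmetric analogue of the stated inequality $H(\alpha,0)\ge\alpha$). A secondary nuisance is making sure $|w'| = \gamma$ exactly: the image of $H(\cdot,\cdot)$ is a subset of $\gamma$, positions outside it are padded with a letter of $F$, and since $H(0,0)\ge 0$ and $H$ is injective on $\gamma\times\gamma$ one checks $w'$ is a genuine word of length $\gamma$ (not less). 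Everything else is routine; the real content is just the indexing scheme via $H$.
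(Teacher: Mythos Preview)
Your proposal is correct and follows essentially the same approach as the paper: lay down $\gamma$-many copies of $w$ via the Hessenberg pairing, pad the remaining positions with a fixed element of $F$, and observe that the copy indexed by $|u|$ survives in any suffix $v$. The only (cosmetic) difference is that you swap the roles of the two Hessenberg coordinates---you set $\lat{w'}{H(\alpha,\delta)}=\lat w\alpha$ and need $H(0,\delta)\ge\delta$, whereas the paper sets $\lat{w'}{H(\alpha,\beta)}=\lat w\beta$ and uses the already-stated $H(\alpha,0)\ge\alpha$; your symmetric inequality follows by the same one-line induction, so there is no real gap.
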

\begin{proof}
  Let us pick $x \in F$.  We build $w'$ as the following word of
  length $\gamma$: $\lat {w'} {H (\alpha, \beta)} \eqdef \lat w \beta$
  for all $\alpha, \beta < \gamma$, and $\lat {w'} \delta \eqdef x$
  for every position $\delta < \gamma$ that is not in the range of
  $H$.  Now let us write $w'$ as $uv$ with $|u| < \gamma$.  Then
  $H (|u|, \_)$ exhibits $w$ as a subword of $v$, using the fact that
  $H (|u|, 0) \geq |u|$.
\end{proof}

\section{Continuity and irredundancy}
\label{sec:cont-prop}

\begin{lemma}
  \label{lemma:cat:cont}
  Let $X$ be a Noetherian space.
  \begin{enumerate}
  \item For any set $Y$ of transfinite words on $X$ containing
    $X^{\leq 1}$, the function $i \colon X \to Y$ mapping $x \in X$ to
    the one-letter word $x$ is continuous.
  \item For every ordinal $\beta$, the concatenation map
    $cat \colon X^{<\omega^\beta} \times X^{\leq \omega^\beta} \to
    X^{\leq \omega^\beta}$ (resp.,
    $cat \colon X^{<\omega^\beta} \times X^{< \omega^\beta} \to X^{<
      \omega^\beta}$) is continuous.
  \end{enumerate}
\end{lemma}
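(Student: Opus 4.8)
The plan is to check continuity directly against the subbase of closed sets, using Proposition~\ref{prop:subword:topo}: a map into a set of transfinite words with the regular subword topology is continuous iff the preimage of every transfinite product (intersected with the codomain) is closed in the domain. For part~(1), I would take any transfinite product $P = A_1 \cdots A_n$ and intersect with $Y$; since $i$ sends $x$ to the length-$1$ word $x$, the preimage $i^{-1}(P \cap Y)$ is the set of $x \in X$ such that the one-letter word $x$ lies in $P$. A one-letter word belongs to $A_1 \cdots A_n$ exactly when its single letter can be absorbed by one atom $A_j = F_j^{<\gamma_j}$ that can contain a one-letter word (i.e.\ $\gamma_j$ is a non-trivial bound, which it always is for an atom), with all other atoms contributing $\epsilon$; so the preimage is $\bigcup_j F_j$ (ranging over those $j$ for which the remaining atoms can all produce $\epsilon$, which they can since every atom contains $\epsilon$), a finite union of closed sets, hence closed. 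I should be slightly careful about the $\gamma_j = \omega^0+1$ case, where $F_j$ is irreducible closed, but it is still closed, so the argument goes through.

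For part~(2), the two cases ($X^{\leq\omega^\beta}$ and $X^{<\omega^\beta}$) are parallel, so I would treat them together, writing $\gamma$ for $\omega^\beta$ and letting the codomain be $Z$, where $Z = X^{\leq\gamma}$ or $Z = X^{<\gamma}$ accordingly. By Proposition~\ref{prop:subword:topo} applied to $Z$ (and to $X^{<\gamma}$ for the first factor), it suffices to show that for every transfinite product $P = A_1 \cdots A_n$, the set $cat^{-1}(P \cap Z)$ is closed in $X^{<\gamma} \times Z$. The key combinatorial fact is a \emph{decomposition of a product along a cut}: given atoms $A_1,\dots,A_n$, the pairs $(u,v)$ with $u \in X^{<\gamma}$, $v \in Z$, and $uv \in A_1 \cdots A_n$ are exactly the finite union, over all ways of ``splitting'' the list of atoms, of products of a transfinite product for $u$ and a transfinite product for $v$. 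Concretely, $uv \in A_1 \cdots A_n$ iff there is an index $k$ and a factorization $A_k = A_k' A_k''$ (where $A_k'$ and $A_k''$ are obtained from $A_k = F_k^{<\gamma_k}$ by splitting the exponent $\gamma_k$, using Lemma~\ref{lemma:F:split} and the fact that a bound splits as a sum of two smaller bounds in boundedly many ways) with $u \in A_1 \cdots A_{k-1} A_k'$ and $v \in A_k'' A_{k+1} \cdots A_n$. This exhibits $cat^{-1}(P \cap Z)$ as a finite union of sets of the form $(Q_1 \cap X^{<\gamma}) \times (Q_2 \cap Z)$ with $Q_1, Q_2$ transfinite products, each of which is closed in the product topology since each factor is closed by Proposition~\ref{prop:subword:topo}.

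The main obstacle is the bookkeeping in this decomposition, and in particular verifying that the pieces $Q_1, Q_2$ actually land in the right spaces, i.e.\ that $Q_1 \cap X^{<\gamma}$ and $Q_2 \cap Z$ are exhaustive: one must check that whenever $uv \in P$ with $|u| < \gamma$ and $v \in Z$, the induced split $A_k = A_k' A_k''$ can be chosen with $|u|$-part a word of length $<\gamma$ — this is where the indecomposability of $\gamma = \omega^\beta$ is essential (via Lemma~\ref{lemma:bounds}(1): a prefix of a word of length $<\gamma$ has length $<\gamma$, and the corresponding suffix does too when $\gamma$ is indecomposable, whereas the suffix may only have length $\le\gamma$ in the $X^{\leq\gamma}$ variant, matching $Z = X^{\leq\gamma} = X^{<\gamma+1}$). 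I also need to confirm that the collection of possible splits of a single atom $F_k^{<\gamma_k}$ into $F_k^{<\delta} F_k^{<\delta'}$ is finite: since $\gamma_k$ is a bound, either $\gamma_k = \omega^{\beta'}$ (splits as $\omega^{\beta'}$ times two pieces, but by Lemma~\ref{lemma:F:split}-style reasoning one only needs the trivial split $F_k^{<\gamma_k} F^{<\omega^0}$ together with $F^{<\omega^0} F_k^{<\gamma_k}$, plus $F^{\le\delta}F^{<\gamma_k}$ shapes) or $\gamma_k = \omega^{\beta'}+1$ (which splits as $\omega^{\beta'}+1$ with an $\epsilon$, or as $\omega^{\beta'}$ and $\omega^0+1$, etc.) — in all cases finitely many shapes, each a transfinite product. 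Once this is set up cleanly the continuity of $cat$ is immediate; I would state the atom-splitting fact as an auxiliary sublemma to keep the argument readable.
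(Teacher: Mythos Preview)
Your plan matches the paper's proof. Part~(1) is identical; for part~(2) the paper organizes the same decomposition as an induction on the number $n$ of atoms (peel off the leading atom $A_1$, apply the induction hypothesis to $P' = A_2\cdots A_n$, then analyze the set of pairs $(u,v)$ such that $v = v_1 v_2$ with $uv_1 \in A_1$ and $v_2 \in P'$), which unwinds to your all-cuts-at-once description.

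One point in your sketch needs cleaning up. Your ``atom split'' is not obtained by writing the exponent $\gamma_k$ as a sum of two smaller bounds, and Lemma~\ref{lemma:F:split} is not the relevant tool---an indecomposable $\gamma_k$ does not decompose as such a sum at all. What you actually need is the shape of $cat^{-1}(F^{<\gamma_k})$, and this comes from Lemma~\ref{lemma:bounds}: if $\gamma_k = \omega^{\beta'}$ is indecomposable then $uv \in F^{<\gamma_k}$ iff $u, v \in F^{<\gamma_k}$, so $cat^{-1}(F^{<\gamma_k}) = F^{<\gamma_k} \times F^{<\gamma_k}$; if $\gamma_k = \omega^{\beta'}+1$ then $cat^{-1}(F^{<\gamma_k}) = (F^{<\omega^{\beta'}} \times F^{<\gamma_k}) \cup (F^{<\gamma_k} \times \varepsilon)$. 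This is exactly the case analysis the paper carries out. Your ``trivial splits $F_k^{<\gamma_k} F^{<\omega^0}$ and $F^{<\omega^0} F_k^{<\gamma_k}$ plus $F^{\le\delta}F^{<\gamma_k}$ shapes'' phrasing obscures this and, read literally, would miss the pairs with both $u$ and $v$ nonempty in the indecomposable case. Once you state the split this way, your direct argument and the paper's induction are the same proof.
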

\begin{proof}
  (1) For every preatom $F^{<\gamma}$ with $\gamma$ non-trivial,
  $i^{-1} (F^{<\gamma}) = F$ is closed.  Then, the inverse image of
  any transfinite product $A_1 A_2 \cdots A_n \cap Y$ is
  $i^{-1} (A_1) \cup \cdots \cup i^{-1} (A_n)$, which is closed.

  (2) First, $cat$ is well-defined by Lemma~\ref{lemma:bounds}~(1).
  As far as continuity is concerned, let $P \eqdef A_1 A_2 \cdots A_n$
  be any transfinite product.  We show that $cat^{-1} (P)$ (or rather,
  $cat^{-1} (P \cap X^{\leq \omega^\beta} )$) is (the intersection of
  $X^{<\omega^\beta} \times X^{\leq \omega^\beta}$ with) a finite
  union $\bigcup_{i=1}^m P_i \times Q_i$ of products of pairs of
  transfinite word products $P_i$, $Q_i$, by induction on $n$.  If
  $n=0$, then $cat^{-1} (P)$ only contains $(\epsilon, \epsilon)$,
  hence is equal to $\varepsilon \times \varepsilon$.  Otherwise, let
  $P' \eqdef A_2 \cdots A_n$, and let us write $A_1$ as $F^{<\gamma}$,
  where $\gamma$ is a non-trivial bound, and $F$ is closed.
  By induction hypothesis, $cat^{-1} (P')$ is a finite union
  $\bigcup_{i=1}^m P_i \times Q_i$, where $P_i$ and $Q_i$ are
  transfinite word products.  The pairs of words $u$, $v$ whose
  concatenation are in $P$ are those such that $u$ is of the form
  $u_1 u_2$ with $u_1 \in A_1$ and $(u_2, v) \in cat^{-1} (P')$
  (namely, the elements of $\bigcup_{i=1}^m A_1 P_i \times Q_i$,
  since, as one sees easily, concatenation distributes over union) or
  such that $v$ is of the form $v_1 v_2$ with $u v_1 \in A_1$ and
  $v_2 \in P'$.  In order to conclude, it therefore suffices to show
  that the set $A$ of pairs $(u, v)$ with $v$ of the form $v_1 v_2$,
  $u v_1 \in A_1$ and $v_2 \in P'$, is a finite union of products of
  pairs of transfinite products.

  If $\gamma$ is of the form $\omega^\beta$, then $u v_1 \in A_1$ if
  and only if $u \in F^{<\gamma}$ and $v_1 \in F^{<\gamma}$.  The only
  if direction is clear, and the if direction is by
  Lemma~\ref{lemma:bounds}~(1).  Hence $A = F^{<\gamma} \times
  F^{<\gamma} P'$ in this case.
  
  If $\gamma$ is of the form $\omega^\beta+1$, then $u v_1 \in A_1$ if
  and only if $u \in F^{<\omega^\beta}$ and
  $v_1 \in F^{\leq\omega^\beta}$, or $u \in F^{\leq\omega^\beta}$ and
  $v_1=\epsilon$.  In the only if direction, we reason by cases,
  depending whether $|u| = \omega^\beta$ or not.  In the if direction,
  the case $v_1 = \epsilon$ is obvious, while
  $u \in F^{<\omega^\beta}$ and $v_1 \in F^{\leq\omega^\beta}$ imply
  $u v_1 \in A_1 = F^{<\gamma}$ by Lemma~\ref{lemma:bounds}~(2).
  Hence
  $A = (F^{<\omega^\beta} \times F^{<\gamma} P') \cup (F^{<\gamma}
  \times P')$ in this case.
\end{proof}

On spaces of the form $X^{<\alpha}$, 
the following refinement of the notion of transfinite product will be
the family $\mathcal P$ we will use Proposition~\ref{prop:S} on.
\begin{definition}[$\alpha$-product]
  \label{defn:alpha:prod}
  For a topological space $X$ and a bound $\alpha$, the
  \emph{$\alpha$-products} are the products of the form
  ${F_1}^{<\gamma_1} {F_2}^{<\gamma_2} \cdots {F_n}^{<\gamma_n}$ where
  $n \in \nat$,
  \begin{itemize}
  \item $\gamma_i \leq \alpha$ and $F_i$ is non-empty for each $i$,
    $1\leq i\leq n$,
  \item and if $\alpha$ is decomposable, then
    $\gamma_i < \alpha$ for every $i$, $1\leq i < n$; namely, the only
    $\gamma_i$ that is equal to $\alpha$, if any, is obtained with
    $i=n$.
  \end{itemize}
\end{definition}

\begin{proposition}
  \label{prop:alpha:prod}
  For every topological space $X$ and every bound $\alpha$, the
  $\alpha$-products are closed in $X^{<\alpha}$, and their complements
  form a subbase of the regular subword topology.
\end{proposition}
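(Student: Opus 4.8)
The plan is to establish the two assertions separately: first that every $\alpha$-product is closed in $X^{<\alpha}$, and then that the complements of $\alpha$-products generate the regular subword topology. For the first, I would start from Proposition~\ref{prop:subword:topo}, which already tells us that intersections of $X^{<\alpha}$ with transfinite products are closed. An $\alpha$-product is in particular a transfinite product (each $F_i^{<\gamma_i}$ with $F_i$ non-empty and $\gamma_i\le\alpha$ a non-trivial bound is an atom, except possibly when $\gamma_i=\omega^0+1$ and $F_i$ is not irreducible; but by Proposition~\ref{prop:subword:topo} we do not actually need the atom condition, since any transfinite product, and more generally any set $F_1^{<\alpha_1}\cdots F_n^{<\alpha_n}$, is closed). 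So I just need to check that an $\alpha$-product $P={F_1}^{<\gamma_1}\cdots{F_n}^{<\gamma_n}$ satisfies $P\subseteq X^{<\alpha}$, i.e.\ that every word in $P$ has length $<\alpha$; then $P\cap X^{<\alpha}=P$ is closed in $X^{<\alpha}$. This is a length computation using Lemma~\ref{lemma:bounds}: if $\alpha=\omega^\beta$ is indecomposable, then each $\gamma_i\le\alpha$ so each factor has length $<\alpha$, and repeatedly applying Lemma~\ref{lemma:bounds}~(1) gives $|w|<\alpha$; if $\alpha=\omega^\beta+1$ is decomposable, the definition of $\alpha$-product forces $\gamma_i<\alpha$ (hence $\le\omega^\beta$) for $i<n$ while $\gamma_n\le\alpha$, so the prefix $w_1\cdots w_{n-1}$ has length $\le\omega^\beta$ by Lemma~\ref{lemma:bounds}~(1),(2) and appending $w_n$ of length $<\alpha=\omega^\beta+1$, i.e.\ $\le\omega^\beta$, keeps the total $\le\omega^\beta<\alpha$ by Lemma~\ref{lemma:bounds}~(2). (One should be slightly careful about the edge case where $\gamma_n$ itself can equal $\alpha$: then $|w_n|<\alpha$ still, so the bound $\le\omega^\beta$ holds.)

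For the second assertion, I need to show the complements of $\alpha$-products form a subbase of the regular subword topology on $X^{<\alpha}$ — equivalently, that the $\alpha$-products are a subbase of \emph{closed} sets. One inclusion is immediate from the first part: every $\alpha$-product is closed, so the topology they generate is coarser than the regular subword topology. For the other direction I must show every subbasic closed set of the regular subword topology — by Proposition~\ref{prop:subword:topo}, an intersection $T\cap X^{<\alpha}$ where $T$ is a transfinite product $A_1\cdots A_n$ — is a finite union (or more generally a closed set) built from $\alpha$-products. The key manipulation is to rewrite, atom by atom, each factor $F_i^{<\gamma_i}$ so that the bound is $\le\alpha$ rather than arbitrary. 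Concretely: if $\gamma_i>\alpha$, then inside $X^{<\alpha}$ any word has length $<\alpha$, so its $i$-th block (of length $<\gamma_i$) is automatically of length $<\alpha\le\gamma_i$; thus $F_i^{<\gamma_i}\cap X^{<\alpha}=F_i^{<\alpha}\cap X^{<\alpha}$ — or rather we should apply Lemma~\ref{lemma:F:split} to $F_i^{<\alpha}$ to further break it into non-trivial bounds $\le\alpha$. Doing this to every factor (and also splitting any $F_i^{<\gamma_i}$ with $\gamma_i=\alpha$ decomposable that is not in last position into $F_i^{<\omega^\beta}F_i^{<\alpha}$, then noting the trailing $F_i^{<\alpha}$ can be absorbed or, if not in last position overall, must itself be handled) yields a transfinite product all of whose bounds are $\le\alpha$ and which respects the "only the last $\gamma_i$ may equal $\alpha$ when $\alpha$ is decomposable" constraint. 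Finally, factors with $F_i=\emptyset$ are deleted (they contribute $\varepsilon$). The resulting object is an $\alpha$-product, or a finite union of them once we also intersect with $X^{<\alpha}$ to discard any residual over-long tail.

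The step I expect to be the main obstacle is the bound-rewriting in the second part, specifically handling the interaction between decomposability of $\alpha$ and the position of a factor. When $\alpha=\omega^\beta+1$ and some interior factor $F_i^{<\gamma_i}$ has $\gamma_i>\alpha$, replacing it by $F_i^{<\alpha}$ and then splitting via Lemma~\ref{lemma:F:split} produces $F_i^{\le\omega^\beta}\cdots$, i.e.\ a factor with bound $\omega^\beta+1=\alpha$ sitting in a non-final position — which violates the $\alpha$-product constraint. The fix is to observe that within $X^{<\alpha}=X^{<\omega^\beta+1}=X^{\le\omega^\beta}$, a word has total length $\le\omega^\beta$, so an interior block actually has length $<\omega^\beta$ (it cannot consume the whole length if a later non-empty block follows — unless later blocks are empty, a case which we can isolate by a finite disjunction on which trailing blocks are $\varepsilon$). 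So one replaces $F_i^{\le\omega^\beta}$ in interior position by $F_i^{<\omega^\beta}$ at the cost of splitting into finitely many cases according to which suffix of blocks is empty; in the case where blocks $i+1,\dots,n$ are all non-trivial (non-empty, non-$\varepsilon$), the interior bound is genuinely $<\alpha$. This bookkeeping is routine but fiddly; everything else is either immediate from Proposition~\ref{prop:subword:topo} and Lemma~\ref{lemma:F:split}, or a length computation via Lemma~\ref{lemma:bounds}.
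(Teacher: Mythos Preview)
Your proposal is correct and follows essentially the same route as the paper: first the containment $P\subseteq X^{<\alpha}$ via the length arithmetic of Lemma~\ref{lemma:bounds}, then reducing all exponents to $\le\alpha$ and, in the decomposable case $\alpha=\omega^\beta+1$, splitting each interior $F^{<\alpha}$-block according to whether its length is $<\omega^\beta$ or exactly $\omega^\beta$ (which forces all later blocks to be empty)---the paper packages this as the identity $F^{<\alpha}Q = F^{<\omega^\beta}Q \cup F^{<\alpha}$ inside $X^{<\alpha}$, iterated over the offending positions, which is your ``finite disjunction on which trailing blocks are $\varepsilon$.'' One small caveat: you invoke Proposition~\ref{prop:subword:topo} to describe the subbasic closed sets, but that proposition assumes $X$ Noetherian, whereas Proposition~\ref{prop:alpha:prod} is stated for arbitrary $X$; you should instead appeal directly to Definition~\ref{defn:subword:topo} (using Lemma~\ref{lemma:F:split} to reduce arbitrary ordinal exponents to bounds), as the paper implicitly does.
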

\begin{proof}
  Let
  $P \eqdef {F_1}^{<\gamma_1} {F_2}^{<\gamma_2} \cdots
  {F_n}^{<\gamma_n}$ be an $\alpha$-product.  Let us consider any
  transfinite word $w$ in $P$, and let us write it as
  $u_1 u_2 \cdots u_n$, where $u_i \in {F_i}^{<\gamma_i}$ for each
  $i$.  If $\alpha$ is indecomposable, then $|w| < \alpha$ because
  $|u_i| < \alpha$ for every $i$, and using
  Lemma~\ref{lemma:bounds}~(1).  Otherwise, let us write $\alpha$ as
  $\omega^\beta+1$.  If $\gamma_i \leq \omega^\beta$ for every $i$, by
  the same argument $|w| < \omega^\beta$, hence $|w| < \alpha$.  By
  the second item in the definition of $\alpha$-products, the only
  remaining possibility is that $\gamma_i \leq \omega^\beta$ for every
  $i$, $1\leq i< n$, that $n \geq 1$ and that
  $\gamma_n = \omega^\beta+1 = \alpha$.  Then $|u_n| < \alpha$; since
  $|u_{n-1}| < \gamma_{n-1} < \alpha$, we obtain
  $|u_{n-1} u_n| < \alpha$ by Lemma~\ref{lemma:bounds}~(2); then
  $|u_{n-2} u_{n-1} u_n| < \alpha$, \ldots, and eventually
  $|w| < \alpha$.

  To show the second part of the proposition, we claim that the
  intersection of every transfinite product
  $P \eqdef {F_1}^{\gamma_1} {F_2}^{<\gamma_2} \cdots
  {F_n}^{<\gamma_n}$ with $X^{<\alpha}$ is a finite union of
  $\alpha$-products.  If $n=0$, $P$ is already an $\alpha$-product, so
  let us assume $n \neq 0$.  Given any $w \in P \cap X^{<\alpha}$, we
  can write $w$ as $u_1 u_2 \cdots u_n$, where
  $u_i \in {F_i}^{<\gamma_i}$ for each $i$.  For each $i$, not only
  $|u_i| < \gamma_i$ but also $|u_i| \leq |w| < \alpha$, so we can
  assume without loss of generality that $\gamma_i \leq \alpha$ for
  every $i$, $1\leq i\leq n$.

  If $\alpha$ is indecomposable, then this makes $P$ an
  $\alpha$-product.  Henceforth let us assume that $\alpha$ is
  decomposable, say $\alpha \eqdef \omega^\beta+1$.  Then we can
  rewrite every transfinite product of the form $F^{<\alpha} Q$
  included in $X^{<\alpha}$ as $F^{<\omega^\beta} Q \cup F^{<\alpha}$.
  Indeed, every word $w \eqdef uv$ with $u \in F^{<\alpha}$ and
  $v \in Q$ is either such that $|u| < \omega^\beta$ (then
  $w \in F^{<\omega^\beta} Q$), or $|u| = \omega^\beta$.  If
  $|u|=\omega^\beta$, then $v=\epsilon$, otherwise
  $|w| > \omega^\beta$, which is impossible since
  $w \in X^{< \alpha}$; so $w \in F^{<\alpha}$.

  We can therefore rewrite $P$ as follows.  Let $i_1 < \cdots < i_k$
  be the list of indices $i$ between $1$ and $n-1$ such that
  $\gamma_i = \alpha$.  Let $\gamma'_i \eqdef \gamma_i$ for every $i$
  different from $i_1$, \ldots, $i_k$, and
  $\gamma'_i \eqdef \omega^\beta$ otherwise.  (I.e., we replace the
  exponents equal to $\alpha = \omega^\beta+1$ by $\omega^\beta$.)
  Then $P$ is the union of the $\alpha$-products
  ${F_1}^{<\gamma'_1} {F_2}^{<\gamma'_2} \cdots
  {F_{i_j-1}}^{<\gamma'_{i_j-1}} {F_{i_j}}^{<\alpha}$,
  $1 \leq j \leq k$, and
  ${F_1}^{<\gamma'_1} {F_2}^{<\gamma'_2} \cdots {F_n}^{<\gamma'_n}$.
\end{proof}

\begin{proposition}
  \label{prop:irred}
  Let $X$ be a Noetherian space.  For every bound $\alpha$, every
  $\alpha$-product is irreducible in $X^{<\alpha}$.  Hence
  the family of $\alpha$-products is irredundant.
\end{proposition}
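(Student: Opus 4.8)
### Proof strategy for Proposition~\ref{prop:irred}

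The plan is to show directly that an $\alpha$-product $P \eqdef {F_1}^{<\gamma_1} \cdots {F_n}^{<\gamma_n}$ is irreducible in $X^{<\alpha}$, and then to invoke Proposition~\ref{prop:alpha:prod} (which tells us that the $\alpha$-products are closed, with complements forming a subbase) together with Lemma~\ref{lemma:prod} to reduce irreducibility to a statement purely about subbasic open sets. Recall that a nonempty closed set $C$ is irreducible iff it is not contained in the union of two proper closed subsets; equivalently, iff whenever $C$ meets each of finitely many subbasic open sets $U_1, \dots, U_k$, it meets their intersection. Since every $F_i$ is nonempty and each $\gamma_i$ is a nontrivial bound (so $\gamma_i \geq \omega^0+1$ and $P$ always contains a word), $P$ is nonempty, so the real content is the meeting condition.

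The key step is a \emph{stuttering/amalgamation} argument: given finitely many subbasic open sets $\complement Q_1, \dots, \complement Q_k$ (complements of $\alpha$-products, or more generally of transfinite products) each meeting $P$, I want to produce a single word $w \in P$ lying in all of them, i.e.\ $w \notin Q_j$ for every $j$. For each $j$ pick a witness $w_j \in P \setminus Q_j$, written as $w_j = u_1^{(j)} \cdots u_n^{(j)}$ with $u_i^{(j)} \in {F_i}^{<\gamma_i}$. The idea is to build $w = v_1 v_2 \cdots v_n$ where $v_i \in {F_i}^{<\gamma_i}$ is obtained by concatenating (suitably padded/stuttered) copies of all the $u_i^{(j)}$, so that $w_j \leq_* w$ for every $j$ simultaneously. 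Then since each $Q_j$ is downwards-closed under $\leq_*$ (Lemma~\ref{lemma:prod}) and $w_j \notin Q_j$, the word $w$ — being $\geq_*$ a non-member — need not itself be a non-member in general, so this naive direction fails; instead I must exploit that the $Q_j$ are \emph{transfinite products}, for which membership of $w$ forces membership of some specific subword. The cleaner route is: show that if $w \in Q_j$ and $w_j \leq_* w$ then... no — rather, use that each preatom $G^{<\delta}$ appearing in $Q_j$ is closed under subwords but \emph{not} upward, and argue combinatorially that a sufficiently "generic" amalgam $w$ of the $w_j$'s that still lies in $P$ cannot lie in any $Q_j$. Concretely, within each block $v_i$ I use Lemma~\ref{lemma:stutter}: since $\gamma_i$ is a nontrivial bound, either $\gamma_i$ is indecomposable (and Lemma~\ref{lemma:bounds}~(1) lets me concatenate finitely many length-$<\gamma_i$ words while staying $<\gamma_i$, and Lemma~\ref{lemma:stutter} lets me arrange that every sufficiently long suffix of the block contains each $u_i^{(j)}$ as a subword), or $\gamma_i = \omega^0+1$ in which case $F_i$ is irreducible and each $u_i^{(j)}$ is a single letter or empty, and I use irreducibility of $F_i$ in $X$ to meet the relevant subbasic opens of $X$ at that coordinate.

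The main obstacle — and where I'd spend the most care — is the bookkeeping that shows the amalgam $w$ \emph{avoids} each $Q_j$. The right way to package this is probably a small lemma of the form: "if $P$ is an $\alpha$-product and $Q$ a transfinite product with $P \not\subseteq Q$, then one can exhibit, inside each block ${F_i}^{<\gamma_i}$, a word so long/generic that no factorization of the resulting $w$ witnesses $w \in Q$", proved by induction on the number of atoms of $Q$ and tracking how a purported factorization of $w$ according to $Q$ must split each block $v_i$. The indecomposable case leans on Lemma~\ref{lemma:bounds} to guarantee the padded blocks stay within their length bounds, and on the Hessenberg-pairing construction of Lemma~\ref{lemma:stutter} to make "every long suffix is big enough"; the $\omega^0+1$ case is exactly where irreducibility of $F_i$ (part of the definition of an atom) gets used, mirroring how irreducibility of the coordinate spaces makes a product of irreducibles irreducible. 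Once the meeting condition is established for all finite families of subbasic opens, $P$ is irreducible; and since this holds for every $\alpha$-product and the $\alpha$-products are pairwise "comparable or separable" in the required sense, the family is irredundant by definition (no element is empty, and an $\alpha$-product contained in a finite union of $\alpha$-products is, being irreducible, contained in one of them).
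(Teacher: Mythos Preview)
Your proposal contains a logical slip at the pivotal step. Having built an amalgam $w \in P$ with $w_j \leq_* w$ for each $j$, you claim that from downward-closedness of $Q_j$ and $w_j \notin Q_j$ one \emph{cannot} conclude $w \notin Q_j$. This is backwards: downward-closedness of $Q_j$ means its complement is upward closed, so $w_j \notin Q_j$ together with $w_j \leq_* w$ gives $w \notin Q_j$ immediately. The ``naive'' approach you abandon would therefore work --- \emph{if} the amalgam can actually be built inside $P$. That is the genuine obstacle, and you do not identify it: constructing $v_i \in F_i^{<\gamma_i}$ above all the $u_i^{(j)}$ requires the block $F_i^{<\gamma_i}$ to be directed under $\leq_*$. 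This holds when $\gamma_i$ is a proper bound (by concatenation, or by an interleaving trick when $\gamma_i = \omega^\beta+1$ and both words have length $\omega^\beta$), but it can fail for $\gamma_i = \omega^0+1$: an irreducible closed $F_i$ need not be directed in its specialization order (take $\nat$ with the cofinite topology, whose specialization order is equality). Your fallback --- ``use irreducibility of $F_i$ to meet the relevant subbasic opens of $X$ at that coordinate'' --- is not a proof, since the sets $Q_j$ do not factor coordinatewise along the atoms of $P$, and the ``small lemma'' you invoke is neither precisely stated nor argued.

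The paper sidesteps all of this combinatorics. It shows each atom is irreducible on its own: for a proper bound $\gamma$, the atom $F^{<\gamma}$ is directed under $\leq_*$ and hence irreducible (closed sets being downward closed by Lemma~\ref{lemma:prod}); for $\gamma = \omega^0+1$, continuity of $i \colon X \to X^{<\alpha}$ (Lemma~\ref{lemma:cat:cont}(1)) makes $cl(i[F])$ irreducible, and one checks this closure equals $F^?$. For a product $F^{<\gamma} Q$ one then uses continuity of $cat$ (Lemma~\ref{lemma:cat:cont}(2)): the set $F^{<\gamma} \times Q$ is irreducible in the product space, and its image under $cat$ is already closed, hence equals its own (irreducible) closure. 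The second clause in Definition~\ref{defn:alpha:prod} is arranged precisely so that the appropriate domain of $cat$ applies when $\alpha$ is decomposable. This route is short, conceptual, and avoids the unresolved bookkeeping in your sketch.
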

\begin{proof}
  We first show that every atom $F^{<\gamma}$ is irreducible in
  $X^{<\alpha}$.

  If $\gamma = \omega^0+1$ and $F$ is irreducible, then $i$ is
  continuous (Lemma~\ref{lemma:cat:cont}~(1)), so
  $\Sober i (F) = cl (i [F])$ is irreducible closed in
  $X^{<\alpha}$. Then $cl (i [F])$ is non-empty, and downwards-closed
  under $\leq_*$ by Lemma~\ref{lemma:prod}, so it contains
  $\epsilon$.  Clearly $i [F] \subseteq cl (i [F])$, so
  $F^{<\gamma} = F^? = \{\epsilon\} \cup i [F] \subseteq cl (i [F])$.
  Also, $i [F] \subseteq F^{<\gamma}$, and $F^{<\gamma}$ is closed.
  Therefore $F^{<\gamma} = cl (i [F])$, so $F^{<\gamma}$ is
  irreducible.

  Let now $\gamma$ be a proper bound.  We show that $F^{<\gamma}$ is
  directed in $\leq_*$, namely that $F^{<\gamma}\neq \emptyset$ and
  that any two elements $u$, $v$ of $F^{<\gamma}$ have an upper bound
  $w$ in $F^{<\gamma}$.  If $\gamma$ is indecomposable, then
  $w \eqdef uv$ fits, using Lemma~\ref{lemma:bounds}~(1).  Otherwise,
  let $\gamma \eqdef \omega^\beta+1$.  If $|u| < \omega^\beta$, then
  $uv$ fits again, by Lemma~\ref{lemma:bounds}~(2).  If
  $|v| < \omega^\beta$, then we pick $vu$ instead.  Finally, if
  $|u| = |v| = \omega^\beta$, we define $w$ as the one-for-one
  interleaving of $u$ and $v$, namely as the word of length
  $\omega^\beta$ such that, for every ordinal
  $\lambda + n < \omega^\beta$ (where $\lambda$ is $0$ or a limit
  ordinal, and $n \in \nat$),
  $\lat w {\lambda + 2n} = \lat u {\lambda+n}$ and
  $\lat w {\lambda + 2n+1} = \lat v {\lambda + n}$.

  Since $F^{<\gamma}$ is directed, $F^{<\gamma}$ is irreducible.
  Indeed, if $F^{<\gamma} \subseteq \mathcal C_1 \cup \mathcal C_2$
  where $\mathcal C_1$ and $\mathcal C_2$ are closed in $X^{<\alpha}$,
  but $F^{<\gamma} \not\subseteq \mathcal C_1, \mathcal C_2$, then we
  can pick $u \in F^{<\gamma} \diff \mathcal C_1$ and
  $v \in F^{<\gamma} \diff \mathcal C_2$; let $w$ be an upper bound of
  $u$ and $v$ in $F^{<\gamma}$, then $w$ is neither in $\mathcal C_1$
  nor in $\mathcal C_2$, since those sets are downwards-closed under
  $\leq_*$ (Lemma~\ref{lemma:prod}), and therefore $w \in F^{<\gamma}
  \diff (\mathcal C_1 \cup \mathcal C_2)$, which is impossible.

  We now prove that every $\alpha$-product $P$ is irreducible in
  $X^{<\alpha}$, by induction on the number $n$ of atoms in $P$.  If
  $n=0$, then $P = \varepsilon$, and this is clear.  If $n\geq 1$, we
  have just seen that $P = F^{<\gamma}$ is irreducible.  If
  $n \geq 2$, we write $P$ as $F^{<\gamma} Q$ where $Q$ is shorter,
  hence irreducible by induction hypothesis.
  
  If $\alpha$ is indecomposable, then 
  $F^{<\gamma}$ is irreducible in $X^{<\alpha}$, so
  $F^{<\gamma} \times Q$ is irreducible in
  $X^{<\alpha} \times X^{<\alpha}$.  By
  Lemma~\ref{lemma:cat:cont}~(2), $cat$ is continuous from the latter
  to $X^{<\alpha}$; so
  $\Sober {cat} (F^{<\gamma} \times Q) = cl (cat [F^{<\gamma} \times
  Q])$ is irreducible in $X^{<\alpha}$.  Now
  $cat [F^{<\gamma} \times Q] = F^{<\gamma} Q$ is closed, hence equal
  to its own closure.

  If $\alpha$ is decomposable, then let
  $\alpha \eqdef \omega^\beta+1$.  By the second item in the
  definition of $\alpha$-products, we must have $\gamma < \alpha$,
  hence $\gamma \leq \omega^\beta$.  Then $F^{<\gamma}$ is irreducible
  in $X^{<\omega^\beta}$, while $Q$ is irreducible in
  $X^{\leq\omega^\beta}$.  We use Lemma~\ref{lemma:cat:cont}~(2):
  $cat$ is continuous from
  $X^{<\omega^\beta} \times X^{\leq\omega^\beta}$ to
  $X^{\leq\omega^\beta}$.  Then we conclude as above that
  $F^{<\gamma} Q = cat [F^{<\gamma} \times Q] = cl (cat [F^{<\gamma}
  \times Q])$ is irreducible.
\end{proof}

\section{Inclusion of transfinite products}
\label{sec:incl-transf-prod}

We start with necessary conditions for inclusion of transfinite
products. We abbreviate ``$\gamma= \gamma'$ and $\gamma$ is
decomposable'' as ``$\gamma=\gamma'$ is decomposable''.

\begin{lemma}
  \label{lemma:incl:=>}
  Let $\gamma$, $\gamma'$ be two non-trivial bounds, and let
  $F^{<\gamma} P$, ${F'}^{<\gamma'} P'$ be two transfinite products.
  If $F^{<\gamma} P \subseteq {F'}^{<\gamma'} P'$, then:
  \begin{enumerate}
  \item $P \subseteq  {F'}^{<\gamma'} P'$.
  \item If $F \not\subseteq F'$ then $F^{<\gamma} P \subseteq P'$.
  \item If $F$ is non-empty and if $\gamma > \gamma'$, then
    $F^{<\gamma} P \subseteq P'$.
  \item If $F \subseteq F'$ and $F \neq \emptyset$, and if
    $\gamma = \gamma'$ is decomposable, then $P \subseteq P'$.
  \end{enumerate}
\end{lemma}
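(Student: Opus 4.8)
The plan is to analyze each of the four implications by examining what happens to a well-chosen witness word in $F^{<\gamma}P$ when it is forced to lie in ${F'}^{<\gamma'}P'$. For (1), the inclusion $P \subseteq F^{<\gamma}P$ is trivial since $\epsilon \in F^{<\gamma}$, so (1) follows immediately from transitivity of inclusion. For (2), (3), and (4) the idea is the same: take an arbitrary $w = uv$ with $u \in F^{<\gamma}$, $v \in P$; since $w \in {F'}^{<\gamma'}P'$ we can also write $w = u'v'$ with $u' \in {F'}^{<\gamma'}$ and $v' \in P'$. I want to show that under the hypotheses the word $v$ is already a suffix of $v'$ (or better, equals $v'$ up to a left factor in $P'$, i.e. $v \in P'$ once I account for the fact that $u$ can be taken to have length $0$), or else that the whole of $w$ lies in $P'$.

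First I would dispose of (2). If $F \not\subseteq F'$, pick a letter $x \in F \setminus F'$ and prepend it: for any $w \in F^{<\gamma}P$, the word $xw$ is again in $F^{<\gamma}P$ — here I use Lemma~\ref{lemma:bounds}(3), since $\gamma$ is a non-trivial bound, so either $\gamma$ is proper and prepending a letter keeps the length below $\gamma$, or $\gamma = \omega^0 + 1$ in which case $F^{<\gamma}P = F^? P$ and $w$ already has its first letter in... — actually one must be slightly careful here when $\gamma = \omega^0+1$: then $F^{<\gamma} = F^?$ contains only words of length $\le 1$, so $xw \in F^{<\gamma}P$ forces $w \in P$ already, and there is nothing to prove. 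When $\gamma$ is proper, $xw \in F^{<\gamma}P \subseteq {F'}^{<\gamma'}P'$, so $xw = u'v'$ with $u' \in {F'}^{<\gamma'}$, $v' \in P'$; since the first letter of $xw$ is $x \notin F'$, we cannot have $|u'| \ge 1$, so $u' = \epsilon$ and $xw = v' \in P'$, hence $w \in P'$ (as $P'$ is downwards-closed under $\leq_*$ by Lemma~\ref{lemma:prod}, or simply because $w$ is a suffix and one can re-split $v'$). Thus $F^{<\gamma}P \subseteq P'$.

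For (3), with $F \neq \emptyset$ and $\gamma > \gamma'$: fix $x \in F$ and, given $w = uv \in F^{<\gamma}P$, use Lemma~\ref{lemma:stutter} (if $\gamma$ is indecomposable) or a direct padding argument to replace $u$ by a long word $u_0 \in F^{<\gamma}$ with $|u_0| \ge \gamma'$ and with $u \leq_* $ (suffix of $u_0$); more simply, one shows that from $F \neq \emptyset$ we can find words in $F^{<\gamma}$ of every length $< \gamma$, in particular of length $\ge \gamma'$. Then in any splitting $u_0 v = u'v'$ with $u' \in {F'}^{<\gamma'}$ we have $|u'| < \gamma' \le |u_0|$, so $u'$ is a proper prefix of $u_0$, whence $v$ is a suffix of $v'$ and $v \in P'$; since we are really quantifying over the original $w$, and since the first letter padding can be undone, this gives $F^{<\gamma}P \subseteq P'$. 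The case $\gamma = \omega^\beta+1 > \gamma'$, so $\gamma' \le \omega^\beta$, is handled the same way using that $F^{<\gamma} = F^{\leq \omega^\beta}$ contains words of length $\omega^\beta \ge \gamma'$. For (4), with $F \subseteq F'$, $F \neq \emptyset$, and $\gamma = \gamma' = \omega^\beta+1$ decomposable: here I would take $w = uv$ with $|u| = \omega^\beta$ exactly (possible since $F \neq \emptyset$), so $w$ has a prefix of length $\omega^\beta$ all of whose letters are in $F \subseteq F'$. In the splitting $w = u'v'$ with $u' \in {F'}^{<\gamma'} = {F'}^{\leq \omega^\beta}$, we have $|u'| \le \omega^\beta = |u|$; if $|u'| < \omega^\beta$ then $v'$ has $v$ as a suffix, giving $v \in P'$; if $|u'| = \omega^\beta$, then $u' $ and $u$ have the same length and $v' = v$, so again $v \in P'$. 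Hence $P \subseteq P'$.

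The main obstacle I anticipate is the bookkeeping around the degenerate bound $\omega^0+1$ in part (2) — ensuring the "prepend a letter" trick is valid or trivially unnecessary — and, in parts (3) and (4), rigorously producing witness words in $F^{<\gamma}$ of the required length (or with the required suffix), which is exactly what Lemmas~\ref{lemma:bounds} and~\ref{lemma:stutter} are designed to supply; the rest is routine manipulation of concatenations and suffixes together with downward-closure under $\leq_*$ (Lemma~\ref{lemma:prod}).
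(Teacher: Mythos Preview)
Your outline for (1) and (4) is correct and matches the paper. There are, however, two genuine gaps.

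\textbf{Part (2), the case $\gamma=\omega^0+1$.} You write that in this case ``there is nothing to prove'', but that is not so: you must still show $F^?P\subseteq P'$, and the prepend-a-letter trick is unavailable because $xu$ need not lie in $F^?$. Your argument does show $yv\in P'$ whenever $y\in F\setminus F'$ and $v\in P$, but it says nothing about $yv$ when $y\in F\cap F'$. This is exactly where the irreducibility hypothesis on $F$ (built into the definition of an atom with exponent $\omega^0+1$) is needed and used by the paper: one checks that the set $A=\{y\in X\mid \forall v\in P,\; yv\in P'\}$ is closed (as an intersection of preimages of $P'$ under the continuous maps $y\mapsto yv$), contains $F\setminus F'$, and hence, since $F$ is irreducible and $F\not\subseteq F'$, contains all of $F$. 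Without irreducibility the statement is false: with $X=\{a,b\}$ discrete, $F=\{a,b\}$, $F'=\{a\}$, $\gamma=\gamma'=\omega^0+1$, $P=\varepsilon$, $P'=\{b\}^?$, one has $F^?\subseteq {F'}^?P'$ but $a\notin P'$.

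\textbf{Part (3).} As written, your argument only concludes $v\in P'$, not $w=uv\in P'$. You take $w=uv$, replace $u$ by a longer $u_0$, split $u_0v=u'v'$ with $|u'|<\gamma'\le|u_0|$, and deduce that $v$ is a suffix of $v'$; that gives $P\subseteq P'$, which is strictly weaker than the claim $F^{<\gamma}P\subseteq P'$. The missing idea is that the portion of $u_0$ remaining in $v'$ must still contain $u$ as a subword, so that $uv\leq_* v'$. One clean way (the paper's) is to prepend to $w$ rather than to $u$: with $x\in F$ and $\gamma'^-=\omega^{\beta'}$ (where $\gamma'\in\{\omega^{\beta'},\omega^{\beta'}+1\}$), if $\gamma'^-+|u|<\gamma$ then $x^{\gamma'^-}w\in F^{<\gamma}P$, and any splitting forces $v'=x^\alpha w$, whence $w\in P'$. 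The residual case $\gamma'^-+|u|\ge\gamma$ pins down $\gamma=\omega^\beta+1$, $\gamma'=\omega^\beta$, and this is precisely where Lemma~\ref{lemma:stutter} is required: it produces $W'\in F^{<\gamma}$ of length $\omega^\beta$ such that \emph{every} suffix of $W'$ of length $>\omega^\beta-\gamma'$ contains $u$ as a subword, so that from $W'v=u'v'$ one gets $uv\leq_* v'$. Your sketch gestures at Lemma~\ref{lemma:stutter} but does not deploy it to recover $u$ inside $v'$; the phrase ``$u\leq_*$ (suffix of $u_0$)'' is too weak unless it means \emph{every} suffix past position $\gamma'$, and your final line draws only the weaker conclusion about $v$.
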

\begin{proof}
  Let us assume that $F^{<\gamma} P \subseteq {F'}^{<\gamma'} P'$.
  
  (1) $P \subseteq F^{<\gamma} P$, because $\epsilon \in F^{<\gamma}$,
  and
  $F^{<\gamma} P \subseteq {F'}^{<\gamma'} P'$ by assumption.
 
  (2) Let us assume $F \not\subseteq F'$.  Then there is a letter $x$
  in $F \diff F'$. Let $w \in F^{<\gamma} P$ be arbitrary,
  and let us
  write $w$ as $uv$ where $u \in F^{<\gamma}$ and $v \in P$.

  If $\gamma$ is proper, then $xu$ is in $F^{<\gamma}$ again, by
  Lemma~\ref{lemma:bounds}~(3), so $xw = xuv$ is in
  $F^{<\gamma} P$, hence in ${F'}^{<\gamma'} P'$. Since
  $x \not\in F'$, $xuv$ is in $P'$, so $w = uv$ is
  in $P'$, by
  Lemma~\ref{lemma:prod} and since $w \leq_* xuv$.
  Since $w$ is arbitrary,
  $F^{<\gamma} P \subseteq P'$.

  It remains to deal with the case $\gamma = \omega^0+1$, $F$
  irreducible. Let
  $A \eqdef \{y \in X \mid \forall v \in P, yv \in P'\}$. For every $y
  \in F \diff F'$, $y$ is in $A$: indeed, for every
  $v \in P$, $yv$ is in $F^? P = F^{<\gamma} P$ hence in
  ${F'}^{<\gamma'} P'$, and since $y \not\in F'$, $yv$ must be in
  $P'$. This means that $F \subseteq F' \cup A$. $A$ is also
  equal to $\bigcap_{v \in P} cat (i (\_), v)^{-1} (P')$, where
  $cat (i (\_), v) \colon y \mapsto cat (i (y), v) = yv$ is continuous
  by Lemma~\ref{lemma:cat:cont}. Hence $A$ is closed.
  Since $F$ is irreducible, and since
  $F \not\subseteq F'$, $F$ must be included in $A$; equivalently,
  $FP \subseteq P'$.  Since $P'$ is downwards-closed with respect to
  $\leq_*$ (Lemma~\ref{lemma:prod}), and since every element
  $u$ of $P$ is a subword of $xu \in FP$, $F^? P = P \cup FP$ is also
  included in $P'$---namely, $F^{<\gamma} P \subseteq P'$.

  (3) Let us assume $F^{<\gamma} P \subseteq {F'}^{<\gamma'} P'$, with
  $F \neq \emptyset$ and $\gamma > {\gamma'}$. We pick $x \in F$.
  For every $\alpha$, let $x^\alpha$ be the word
  of length $\alpha$ whose sole letter is $x$.

  Whether $\gamma'$ is equal to $\omega^{\beta'}$ or to
  $\omega^{\beta'+1}$, we define ${\gamma'}^-$ as $\omega^{\beta'}$.
  For
  every transfinite word $v$, if
  $x^{{\gamma'}^-} v \in {F'}^{<\gamma'} P'$, then $v \in P'$.
  Indeed, assuming 
  $x^{{\gamma'}^-} v \in {F'}^{<\gamma'} P'$, we can write
  $x^{{\gamma'}^-} v$ as $v_1 v_2$ where $v_1 \in {F'}^{<\gamma'}$ and
  $v_2 \in P'$.  If $\gamma' = \omega^{\beta'}$, then
  $|x^{{\gamma'}^-}| = \omega^{\beta'} > |v_1|$; if
  $\gamma' = \omega^{\beta'+1}$, then
  $|x^{{\gamma'}^-}| = \omega^{\beta'} \geq |v_1|$.  In any case,
  $|v_1| \leq |x^{{\gamma'}^-}|$, so $v_1$ is a prefix
  of $x^{{\gamma'}^-}$, and $v_2$ is of the form $x^\alpha v$
  for some ordinal $\alpha$.
  Since $v_2 \in P'$, and $v \leq_* x^\alpha v = v_2$, 
  $v \in P'$ by Lemma~\ref{lemma:prod}.

  Let $w \in F^{<\gamma} P$ be arbitrary, and let us write $w$ as $uv$ where
  $u \in F^{<\gamma}$ and $v \in P$.  With the aim of showing that $w
  \in P'$, we form the transfinite word
  $x^{{\gamma'}^-} u$.  Its letters are in $F$, and its length is
  ${\gamma'}^- + |u|$.  

  If ${\gamma'}^- + |u| < \gamma$, then $x^{{\gamma'}^-} u$ is in
  $F^{<\gamma}$, so $x^{{\gamma'}^-} w = x^{{\gamma'}^-} u v$ is in
  $F^{<\gamma} P$, hence in ${F'}^{<\gamma'} P'$.  We have seen that
  this implies $w \in P'$.

  Henceforth, we assume that ${\gamma'}^- + |u| \geq \gamma$. We recall
  that ${\gamma'}^- \leq \gamma' < \gamma$ and that $|u| < \gamma$.
  If $\gamma$ were indecomposable, then we would have
  ${\gamma'}^- + |u| < \gamma$, contradicting our
  assumption. Hence $\gamma = \omega^\beta+1$ for
  some ordinal $\beta$.  Then $\gamma' \leq \omega^\beta$ and
  $|u| \leq \omega^\beta$.  If $\gamma' < \omega^\beta$, then
  ${\gamma'}^- + |u| \leq \omega^\beta < \gamma$, which is impossible
  again.  Therefore $\gamma' = \omega^\beta$.

  To sum up, $|u| < \gamma = \omega^\beta+1$, and
  $\gamma' = \omega^\beta$.  Let $W$ be $u$ itself if
  $|u| = \omega^\beta$, else $ux^{\omega^\beta}$.  In each case, $W$
  is a word of length $\omega^\beta$ whose letters are all in $F$, so
  $W \in F^{<\gamma}$. We use
  Lemma~\ref{lemma:stutter}: let $W'$ be a word of length
  $\omega^\beta$, whose letters are all in $F$, and such that for
  every way of writing $W'$ as $UV$ with
  $|U| < \omega^\beta$, $W \leq_* V$.  Then $W'v$ is in
  $F^{<\gamma} P$, hence in $F'^{<\gamma'} P'$. Let us write $W'v$ as
  $Uv'$ where $|U| < \gamma'$ and $v' \in P'$. Since
  $|U| < \gamma' = \omega^\beta$, $U$ is a prefix of $W'$, and we can
  therefore write $W'$ as $UV$ for some transfinite word $V$, and $v'$
  as $V v$.  By construction, $W \leq_* V$.  Therefore
  $u \leq_* W \leq_* V$, and hence $w = uv \leq_* Vv = v'$.  Since
  $v'$ is in $P'$, so is $w$, by Lemma~\ref{lemma:prod}.


  (4) Since $F \neq \emptyset$, let us pick $x \in F$. We
  write $\gamma = \gamma'$ as $\omega^\beta + 1$.  For every
  $w \in P$, $x^{\omega^\beta} w$ is in $F^{<\gamma} P$, hence in
  ${F'}^{<\gamma'} P'$.  Hence we can write $x^{\omega^\beta} w$
  as $uv$ where $|u| < \gamma'$ (namely, $|u| \leq \omega^\beta$) and
  $v \in P'$.  Since $|u| \leq \omega^\beta = |x^{\omega^\beta}|$, we
  can write $v$ as $x^\alpha w$ for some ordinal $\alpha$.  In
  particular, $w \leq_* v$, and since $v \in P'$, $w$ is in $P'$, by
  Lemma~\ref{lemma:prod}.
\end{proof}

We turn to sufficient conditions. There are three cases, depending on
the relative positions and indecomposability statuses of $\gamma$ and
$\gamma'$.

\begin{lemma}
  \label{lemma:incl:<}
  Let $\gamma$, $\gamma'$ be two non-trivial bounds, and let
  $F^{<\gamma} P$, ${F'}^{<\gamma'} P'$ be two transfinite products.
  Assuming that $\gamma < \gamma'$, or that $\gamma = \gamma'$ is
  indecomposable, $F^{<\gamma} P \subseteq {F'}^{<\gamma'} P'$ if and
  only if:
  \begin{enumerate}
  \item $F \subseteq F'$ and $P \subseteq  {F'}^{<\gamma'} P'$,
  \item or $F \not\subseteq F'$ and $F^{<\gamma} P \subseteq P'$.
  \end{enumerate}
\end{lemma}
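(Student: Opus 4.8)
The plan is to prove the ``if'' direction first, which should be relatively routine, and then the ``only if'' direction, which is the substantive half. For the ``if'' direction: in case~(1), if $F \subseteq F'$ then $F^{<\gamma} \subseteq F'^{<\gamma'}$ provided $\gamma \leq \gamma'$, which holds by hypothesis (either $\gamma < \gamma'$, or $\gamma = \gamma'$); combined with $P \subseteq F'^{<\gamma'} P'$ and the fact that $\epsilon \in F'^{<\gamma'}$, one gets $F^{<\gamma} P \subseteq F'^{<\gamma'} F'^{<\gamma'} P'$, so I need $F'^{<\gamma'} F'^{<\gamma'} \subseteq F'^{<\gamma'}$. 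This is exactly where the case split matters: if $\gamma' = \gamma$ is indecomposable, this follows from Lemma~\ref{lemma:bounds}~(1); if $\gamma < \gamma'$, then $F^{<\gamma} \subseteq F'^{<\gamma'}$ already absorbs into a single $F'^{<\gamma'}$ via Lemma~\ref{lemma:bounds}~(2) applied with the two exponents $\gamma$ and $\gamma'$. In case~(2), $F^{<\gamma} P \subseteq P' \subseteq F'^{<\gamma'} P'$ trivially since $\epsilon \in F'^{<\gamma'}$.

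For the ``only if'' direction, assume $F^{<\gamma} P \subseteq F'^{<\gamma'} P'$. The key observation is that Lemma~\ref{lemma:incl:=>} already does most of the work. By Lemma~\ref{lemma:incl:=>}~(1) we get $P \subseteq F'^{<\gamma'} P'$ unconditionally. Now I split on whether $F \subseteq F'$. If $F \not\subseteq F'$, then Lemma~\ref{lemma:incl:=>}~(2) gives directly $F^{<\gamma} P \subseteq P'$, which is case~(2). If $F \subseteq F'$, then I want to land in case~(1), and since I already have $P \subseteq F'^{<\gamma'} P'$, there is nothing left to prove — case~(1) holds. So the dichotomy on $F \subseteq F'$ versus $F \not\subseteq F'$ immediately yields one of the two alternatives.

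The only subtlety is that I have not yet used the hypothesis that $\gamma < \gamma'$ or $\gamma = \gamma'$ indecomposable in the ``only if'' direction — and indeed it is not needed there, since Lemma~\ref{lemma:incl:=>} parts (1) and (2) hold for arbitrary non-trivial bounds $\gamma, \gamma'$. That hypothesis is genuinely needed only for the ``if'' direction, to ensure the absorption $F^{<\gamma} F'^{<\gamma'} \subseteq F'^{<\gamma'}$ (when $F \subseteq F'$) goes through; it would fail if $\gamma > \gamma'$ or if $\gamma = \gamma'$ is decomposable, which is precisely why those cases are treated in separate lemmas. I expect the main (minor) obstacle to be bookkeeping the absorption step cleanly across the two sub-cases $\gamma < \gamma'$ and $\gamma = \gamma'$ indecomposable, handling $\epsilon$ membership so that $P$ with a leading empty word still sits inside $F'^{<\gamma'} P'$; everything else is a direct appeal to the preceding lemmas.
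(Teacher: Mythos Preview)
Your proposal is correct and follows essentially the same route as the paper: the ``only if'' direction is dispatched by Lemma~\ref{lemma:incl:=>}~(1) and~(2) via the dichotomy on $F \subseteq F'$, and the ``if'' direction reduces to the absorption $F^{<\gamma}\,{F'}^{<\gamma'} \subseteq {F'}^{<\gamma'}$ (when $F\subseteq F'$), which the paper proves elementwise using Lemma~\ref{lemma:bounds}. One cosmetic remark: your intermediate claim ``I need ${F'}^{<\gamma'}{F'}^{<\gamma'} \subseteq {F'}^{<\gamma'}$'' is only valid when $\gamma'$ is indecomposable; in the $\gamma<\gamma'$ sub-case you correctly fall back to $F^{<\gamma}{F'}^{<\gamma'} \subseteq {F'}^{<\gamma'}$ via Lemma~\ref{lemma:bounds}~(2), so the argument is sound, but the exposition would be cleaner if you stated the absorption you actually need ($F^{<\gamma}{F'}^{<\gamma'} \subseteq {F'}^{<\gamma'}$) from the start and then split on whether $\gamma'$ is indecomposable or $\gamma<\gamma'$.
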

\begin{proof}
  The `only if' direction is by Lemma~\ref{lemma:incl:=>}~(1) and
  (2). We deal with the `if' direction. Note that
  $\gamma \leq \gamma'$; also, if $\gamma'$ is
  decomposable, then $\gamma < \gamma'$.
  
  (1) 
  For every $w \in F^{<\gamma} P$, let us write $w$ as $uv$ with
  $u \in F^{<\gamma}$ and $v \in P$.  Since
  $P \subseteq {F'}^{<\gamma'} P'$, $v$ is in ${F'}^{<\gamma'} P'$.
  Let us write $v$ as $v_1 v_2$ with $v_1 \in {F'}^{<\gamma'}$ and
  $v_2 \in P'$.  Then the letters of $uv_1$ are all in $F'$, and
  $|uv_1| < \gamma'$ by Lemma~\ref{lemma:bounds}~(2).  It follows
  that $uv_1$ is in ${F'}^{<\gamma'}$, so $w = uv_1v_2$ is in
  ${F'}^{<\gamma'} P'$.

  (2) If $F^{<\gamma} P \subseteq P'$,
  then
  $F^{<\gamma} P \subseteq P' \subseteq {F'}^{<\gamma'} P'$, where the
  last inequality is because every $w \in P'$ can be written as
  $\epsilon w \in {F'}^{<\gamma'} P'$.
\end{proof}

\begin{lemma}
  \label{lemma:incl:>}
  Let $\gamma$, $\gamma'$ be two non-trivial bounds, and
  $F^{<\gamma} P$, ${F'}^{<\gamma'} P'$ be two transfinite products.
  Assuming that $\gamma > \gamma'$,
  $F^{<\gamma} P \subseteq {F'}^{<\gamma'} P'$ if and only if:
  \begin{enumerate}
  \item $F$ is empty and $P \subseteq {F'}^{<\gamma'} P'$,
  \item or $F$ is non-empty and $F^{<\gamma} P \subseteq P'$.
  \end{enumerate}
\end{lemma}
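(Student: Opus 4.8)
The `only if' direction is essentially free: if $F$ is empty then $F^{<\gamma}=\{\epsilon\}$ and $F^{<\gamma}P=P\subseteq{F'}^{<\gamma'}P'$ is exactly~(1), while if $F$ is non-empty then $\gamma>\gamma'$ is precisely the hypothesis of Lemma~\ref{lemma:incl:=>}~(3), which gives $F^{<\gamma}P\subseteq P'$, i.e.~(2). So the real content is the `if' direction, and it splits along the same dichotomy. In case~(1), if $F=\emptyset$ then $F^{<\gamma}=\{\epsilon\}$, so $F^{<\gamma}P=P$, and by hypothesis $P\subseteq{F'}^{<\gamma'}P'$, done. In case~(2) one argues exactly as in the proof of Lemma~\ref{lemma:incl:<}~(2): if $F^{<\gamma}P\subseteq P'$, then since every $w\in P'$ equals $\epsilon w\in{F'}^{<\gamma'}P'$, we get $P'\subseteq{F'}^{<\gamma'}P'$, hence $F^{<\gamma}P\subseteq P'\subseteq{F'}^{<\gamma'}P'$.

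So the whole lemma reduces to invoking Lemma~\ref{lemma:incl:=>}~(3) for one half of the `only if' direction and to two short containment arguments for the rest; there is no serious obstacle here, and in particular no need to touch the delicate stuttering machinery of Lemma~\ref{lemma:stutter} — that work was already absorbed into Lemma~\ref{lemma:incl:=>}~(3). The one point to be careful about is the case split in the `only if' direction on whether $F$ is empty: when $F=\emptyset$ we cannot apply Lemma~\ref{lemma:incl:=>}~(3) (which needs $F\neq\emptyset$), but in that case the conclusion is trivially~(1), so the dichotomy in the statement is exactly the right one. I would write it as follows.

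\begin{proof}
  For the `only if' direction, assume
  $F^{<\gamma} P \subseteq {F'}^{<\gamma'} P'$.  If $F$ is empty, then
  $F^{<\gamma} = \{\epsilon\}$, so $P = F^{<\gamma} P \subseteq
  {F'}^{<\gamma'} P'$, and we are in case~(1).  If $F$ is non-empty,
  then since $\gamma > \gamma'$, Lemma~\ref{lemma:incl:=>}~(3) applies
  and yields $F^{<\gamma} P \subseteq P'$, which is case~(2).

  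For the `if' direction, in case~(1), $F$ is empty, so
  $F^{<\gamma} = \{\epsilon\}$ and $F^{<\gamma} P = P \subseteq
  {F'}^{<\gamma'} P'$ by assumption.  In case~(2), assume
  $F^{<\gamma} P \subseteq P'$.  Every $w \in P'$ can be written as
  $\epsilon w \in {F'}^{<\gamma'} P'$, so $P' \subseteq
  {F'}^{<\gamma'} P'$.  Therefore $F^{<\gamma} P \subseteq P'
  \subseteq {F'}^{<\gamma'} P'$.
\end{proof}
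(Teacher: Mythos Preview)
Your proof is correct and follows essentially the same approach as the paper's: both reduce case~(1) to the observation that $F=\emptyset$ gives $F^{<\gamma}P=P$, and both handle case~(2) by invoking Lemma~\ref{lemma:incl:=>}~(3) for the `only if' direction and the trivial inclusion $P'\subseteq{F'}^{<\gamma'}P'$ for the `if' direction. The only difference is organizational---the paper treats each case~(1) and~(2) as a bidirectional equivalence, whereas you split by direction first---but the content is identical.
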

\begin{proof}
  (1) If $F=\emptyset$ then $F^{<\gamma} P = P$, and the equivalence
  is clear.
  
  (2) 
  If $F^{<\gamma} P \subseteq P'$, then
  $F^{<\gamma} P \subseteq {F'}^{<\gamma'} P'$ since $P'$ is trivially
  included in ${F'}^{<\gamma'} P'$.  The `only if' direction is by
  Lemma~\ref{lemma:incl:=>}~(3).
\end{proof}

\begin{lemma}
  \label{lemma:incl:=}
  Let $\gamma$, $\gamma'$ be two non-trivial bounds, and let
  $F^{<\gamma} P$, ${F'}^{<\gamma'} P'$ be two transfinite products.
  Assuming that $\gamma = \gamma'$ is decomposable,
  $F^{<\gamma} P \subseteq {F'}^{<\gamma'} P'$ if and only if:
  \begin{enumerate}
  \item $F$ is empty and $P \subseteq {F'}^{<\gamma'} P'$,
  \item or $F$ is non-empty, $F \subseteq F'$, and $P \subseteq P'$,
  \item or $F \not\subseteq F'$ and $F^{<\gamma} P \subseteq P'$.
  \end{enumerate}
\end{lemma}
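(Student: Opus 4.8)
The plan is to prove the two directions separately; the `if' direction is routine, and the `only if' direction is essentially an assembly of the necessary conditions already isolated in Lemma~\ref{lemma:incl:=>}.

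For the `if' direction I would handle the three cases in turn. In case~(1), since $\gamma=\omega^\beta+1\geq 2$ and $F=\emptyset$, the only word with all letters in $F$ and length $<\gamma$ is $\epsilon$, so $F^{<\gamma}=\varepsilon$ and $F^{<\gamma}P=P\subseteq {F'}^{<\gamma'}P'$. In case~(2), writing $w\in F^{<\gamma}P$ as $uv$ with $u\in F^{<\gamma}$ and $v\in P$: from $F\subseteq F'$ and $\gamma=\gamma'$ we get $u\in {F'}^{<\gamma'}$, and $v\in P\subseteq P'$, so $w\in {F'}^{<\gamma'}P'$. In case~(3), $F^{<\gamma}P\subseteq P'\subseteq {F'}^{<\gamma'}P'$, the last inclusion because any $w\in P'$ equals $\epsilon w$ with $\epsilon\in {F'}^{<\gamma'}$.

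For the `only if' direction I would assume $F^{<\gamma}P\subseteq {F'}^{<\gamma'}P'$ and perform a short case analysis. If $F=\emptyset$, then Lemma~\ref{lemma:incl:=>}~(1) gives $P\subseteq {F'}^{<\gamma'}P'$, so we are in case~(1). If $F\neq\emptyset$ but $F\not\subseteq F'$, then Lemma~\ref{lemma:incl:=>}~(2) gives $F^{<\gamma}P\subseteq P'$, so we are in case~(3). In the remaining situation $F\neq\emptyset$ and $F\subseteq F'$; since $\gamma=\gamma'$ is decomposable, Lemma~\ref{lemma:incl:=>}~(4) yields $P\subseteq P'$, so we are in case~(2). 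This covers all possibilities.

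I do not anticipate any genuine obstacle: all the combinatorial work --- in particular the trick of saturating an $F^{<\omega^\beta+1}$ factor by a stuttered prefix, via Lemma~\ref{lemma:stutter} --- has already been carried out inside Lemma~\ref{lemma:incl:=>}~(3) and~(4). The only minor points to keep in mind are that a decomposable bound $\omega^\beta+1$ is automatically non-trivial, so the hypotheses of Lemma~\ref{lemma:incl:=>} are always satisfied here, and that no exclusivity among the three listed conditions needs checking for an `if and only if' statement, the case analysis above showing they are jointly exhaustive.
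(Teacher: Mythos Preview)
Your proposal is correct and matches the paper's proof almost exactly: both treat the `if' direction by the obvious inclusions and the `only if' direction by invoking Lemma~\ref{lemma:incl:=>}~(2) and~(4) according to whether $F\subseteq F'$ or not. The only cosmetic difference is that the paper disposes of the $F=\emptyset$ case once at the outset (noting $F^{<\gamma}P=P$ makes the equivalence immediate for both directions), whereas you thread it through each direction separately and appeal to Lemma~\ref{lemma:incl:=>}~(1); either way is fine.
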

\begin{proof}
  If $F$ is empty then $F^{<\gamma} P = P$, so $F^{<\gamma} P
  \subseteq {F'}^{<\gamma'} P'$ is equivalent to $P \subseteq
  {F'}^{<\gamma'} P'$.  Henceforth, we assume $F$ non-empty.

  `If' direction.  If $F \subseteq F'$ and $P \subseteq P'$, then
  $F^{<\gamma} P \subseteq {F'}^{<\gamma'} P'$ is obvious (recall that
  $\gamma=\gamma'$).  If $F^{\gamma} P \subseteq P'$, then
  $F^{\gamma} P \subseteq P' \subseteq {F'}^{<\gamma'} P'$.

  `Only if'. Let us assume
  $F^{<\gamma} P \subseteq {F'}^{<\gamma'} P'$.  If $F \subseteq F'$,
  then $P \subseteq P'$ by Lemma~\ref{lemma:incl:=>}~(4).
  Otherwise, $F^{<\gamma} P \subseteq P'$ by Lemma~\ref{lemma:incl:=>}~(2).
\end{proof}

\section{Reduced products}
\label{sec:normal-forms}

We can write transfinite products in many equivalent ways. For
example, $\emptyset^{<\gamma} = \varepsilon$ for every non-trivial
bound $\gamma$. Here are a few other cases.
\begin{lemma}
  \label{lemma:prod:equal}
  Let $F$, $F'$ be non-empty closed subsets of a topological space
  $X$, and $\gamma$, $\gamma'$ be non-trivial bounds.  If $F \subseteq
  F'$, then:
  \begin{enumerate}
  \item If $\gamma < \gamma'$ or if $\gamma=\gamma'$ is
    indecomposable, 
    then
    $F^{<\gamma} {F'}^{<\gamma'} = F'^{<\gamma'}$.
  \item If $\gamma'$ is indecomposable, and $\gamma \leq \gamma'$, then
    ${F'}^{<\gamma'} F^{<\gamma} = {F'}^{<\gamma'}$.
  \end{enumerate}
\end{lemma}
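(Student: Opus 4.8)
The plan is to prove each of the two identities by double inclusion, controlling the \emph{letters} of any product word through the hypothesis $F \subseteq F'$ and its \emph{length} through Lemma~\ref{lemma:bounds}; I expect no real obstacle, and in fact the non-emptiness hypotheses on $F$ and $F'$ will play no role.

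The inclusion ``$\supseteq$'' is common to both cases and essentially free. Since a non-trivial bound is at least $\omega^0+1$, hence $>0$, the empty word $\epsilon$ belongs to $F^{<\gamma}$, so $\varepsilon \subseteq F^{<\gamma}$; multiplying by $F'^{<\gamma'}$ on the appropriate side gives $F'^{<\gamma'} = \varepsilon\, F'^{<\gamma'} \subseteq F^{<\gamma} F'^{<\gamma'}$ for~(1), and $F'^{<\gamma'} = F'^{<\gamma'}\, \varepsilon \subseteq F'^{<\gamma'} F^{<\gamma}$ for~(2).

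For ``$\subseteq$'' in~(1): given $w \in F^{<\gamma} F'^{<\gamma'}$, write it as $uv$ with $u \in F^{<\gamma}$ and $v \in F'^{<\gamma'}$. The letters of $u$ lie in $F \subseteq F'$ and those of $v$ lie in $F'$, so every letter of $w$ lies in $F'$. For the length I split on the two sub-hypotheses: if $\gamma < \gamma'$, then $|u| < \gamma$ and $|v| < \gamma'$ give $|w| = |uv| < \gamma'$ by Lemma~\ref{lemma:bounds}~(2); if $\gamma = \gamma'$ is indecomposable, then $|u|, |v| < \gamma'$ give $|w| < \gamma'$ by Lemma~\ref{lemma:bounds}~(1). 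Either way $w \in F'^{<\gamma'}$. For ``$\subseteq$'' in~(2): given $w = uv \in F'^{<\gamma'} F^{<\gamma}$ with $u \in F'^{<\gamma'}$ and $v \in F^{<\gamma}$, all letters of $w$ again lie in $F'$ because $F \subseteq F'$; and since $|v| < \gamma \leq \gamma'$ and $|u| < \gamma'$ with $\gamma'$ indecomposable, Lemma~\ref{lemma:bounds}~(1) yields $|w| = |uv| < \gamma'$, so $w \in F'^{<\gamma'}$.

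In summary, the statement is a pair of ``absorption'' identities for transfinite products; the only point worth keeping in mind is that non-trivial bounds are $\geq \omega^0+1$, which is exactly what makes $\epsilon$ belong to every $F^{<\gamma}$ occurring, so that the ``$\supseteq$'' half is automatic, while the ``$\subseteq$'' half is a direct application of the indecomposability and comparison clauses of Lemma~\ref{lemma:bounds}.
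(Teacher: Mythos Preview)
Your proof is correct, but it takes a more elementary route than the paper's. The paper handles the ``$\subseteq$'' direction by invoking Lemma~\ref{lemma:incl:<} (the characterization of when $F^{<\gamma}P \subseteq {F'}^{<\gamma'}P'$ under the hypothesis $\gamma<\gamma'$ or $\gamma=\gamma'$ indecomposable): for~(1) this reduces $F^{<\gamma}{F'}^{<\gamma'}\subseteq {F'}^{<\gamma'}$ to the tautology ${F'}^{<\gamma'}\subseteq {F'}^{<\gamma'}$, and for~(2) it reduces ${F'}^{<\gamma'}F^{<\gamma}\subseteq {F'}^{<\gamma'}$ to $F^{<\gamma}\subseteq {F'}^{<\gamma'}$. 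You instead argue directly on a word $w=uv$, checking that its letters land in $F'$ and bounding $|w|$ via Lemma~\ref{lemma:bounds}. Your approach is self-contained and makes the absorption mechanism transparent; the paper's is terser because the work has already been packaged into Lemma~\ref{lemma:incl:<}. Your remark that the non-emptiness hypotheses are unused is also accurate.
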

\begin{proof}
  The right-hand sides are always included in the left-hand sides.

  (1) By Lemma~\ref{lemma:incl:<}, and remembering that
  $F \subseteq F'$,
  $F^{<\gamma} {F'}^{<\gamma'} \subseteq F'^{<\gamma'}$ if and only if
  ${F'}^{<\gamma'} \subseteq F'^{<\gamma'}$, which is simply true.


  (2) Since $\gamma'$ is indecomposable, by the same lemma,
  ${F'}^{<\gamma'} F^{<\gamma} \subseteq {F'}^{<\gamma'}$ if and only if
  $F^{<\gamma} \subseteq {F'}^{<\gamma'}$, which holds since
  $F \subseteq F'$ and $\gamma \leq \gamma'$.
\end{proof}
We will see that this leads to canonical forms for transfinite
products.  As in \cite[Theorem~4.22]{GLHKKS:ideals}, and to reduce
excessive pedantry related to the difference between syntax and
semantics, we write $\bA$, $\bB$ (resp., $\bP$, $\bQ$) to denote
atoms, resp.\ sequences of atoms (syntax), and $A$, $B$, $P$, $Q$ for
their respective semantics.  Hence if $\bP = \bA_1 \bA_2 \cdots \bA_n$
(as a sequence), then $P = A_1 A_2 \cdots A_n$ (as a product).  The
(syntactic) atoms $\bA$ are pairs $(F, \gamma)$ of a closed set $F$
and a non-trivial bound $\gamma$ (with $F$ irreducible if
$\gamma=\omega^0+1$), and then $A = F^{<\gamma}$.  Note that
$\bP = \bQ$ implies $P=Q$, but the converse may fail.

\begin{definition}[Reduced]
  \label{defn:reduced}
  A sequence of atoms $\bP \eqdef \bA_1,\bA_2 \cdots \bA_n$ on $X$,
  where $\bA_i \eqdef (F_i, \gamma_i)$ for each $i$, is \emph{reduced}
  if and only if:
  \begin{enumerate}
  \item $F_i$ is a non-empty closed subset of $X$ ($1\leq i\leq n$);
  \item for every $i$, $1\leq i <n$, such that $\gamma_i <
    \gamma_{i+1}$, $F_i$ is not included in $F_{i+1}$;
  \item for every $i$, $1\leq i <n$, such that $\gamma_i=\gamma_{i+1}$
    is indecomposable, $F_i$ and $F_{i+1}$ are incomparable;
  \item and for every $i$, $1\leq i <n$, such that $\gamma_i$ is
    indecomposable and $\gamma_i > \gamma_{i+1}$, $F_i$ does not
    contain $F_{i+1}$.
  \end{enumerate}
\end{definition}

\begin{lemma}
  \label{lemma:red:1}
  For all non-empty closed subsets $F$ and $F'$ of a space $X$, for
  all non-trivial bounds $\gamma$, $\gamma'$,
  \begin{enumerate}
  \item $F^{<\gamma} \neq \varepsilon$;
  \item $F^{<\gamma} \subseteq {F'}^{<\gamma'}$ if and only if $F
    \subseteq F'$ and $\gamma \leq \gamma'$.
  \item $F^{<\gamma} = {F'}^{<\gamma'}$ if and only if $F=F'$ and
    $\gamma=\gamma'$.
  \end{enumerate}
\end{lemma}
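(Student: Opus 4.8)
The plan is to prove the three items by unwinding the definition of $F^{<\gamma}$ as the set of transfinite words of length strictly below $\gamma$ all of whose letters lie in $F$, exploiting the hypotheses that $F$ and $F'$ are non-empty and that $\gamma$ and $\gamma'$ are non-trivial bounds, hence at least $\omega^0+1 = 2$.

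\emph{Item (1).} Since $F \neq \emptyset$, pick a letter $x \in F$. Since $\gamma$ is a non-trivial bound, $\gamma \geq 2$, so the one-letter word $x$, of length $1 < \gamma$, lies in $F^{<\gamma}$; as it is not the empty word $\epsilon$, we get $F^{<\gamma} \neq \varepsilon$.

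\emph{Item (2).} The `if' direction is immediate: if $F \subseteq F'$ and $\gamma \leq \gamma'$, then any word of length $< \gamma$ with all letters in $F$ has length $< \gamma'$ and all letters in $F'$. For the `only if' direction, assume $F^{<\gamma} \subseteq {F'}^{<\gamma'}$. First, for every $x \in F$, the one-letter word $x$ is in $F^{<\gamma}$ (again because $\gamma \geq 2$), hence in ${F'}^{<\gamma'}$, which forces $x \in F'$; thus $F \subseteq F'$. Second, suppose for contradiction that $\gamma > \gamma'$, and pick any $x \in F$ (possible since $F \neq \emptyset$). Then the word $x^{\gamma'}$ consisting of $\gamma'$ copies of $x$ has length $\gamma' < \gamma$, so it belongs to $F^{<\gamma}$ and therefore to ${F'}^{<\gamma'}$, whence $\gamma' = |x^{\gamma'}| < \gamma'$, a contradiction. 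Hence $\gamma \leq \gamma'$.

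\emph{Item (3)} then follows by antisymmetry: $F^{<\gamma} = {F'}^{<\gamma'}$ holds if and only if both $F^{<\gamma} \subseteq {F'}^{<\gamma'}$ and ${F'}^{<\gamma'} \subseteq F^{<\gamma}$, which by (2) applied in both directions is equivalent to $F \subseteq F'$, $\gamma \leq \gamma'$, $F' \subseteq F$ and $\gamma' \leq \gamma$, that is, to $F = F'$ and $\gamma = \gamma'$. There is no real obstacle here; the only points requiring a little care are that a non-trivial bound is at least $2$, so that one-letter words are available to detect $F$ inside $F^{<\gamma}$, and that the word $x^{\gamma'}$ has length exactly $\gamma'$, which is what makes the strict inequality $\gamma' < \gamma$ visible.
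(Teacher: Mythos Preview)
Your proof is correct, and is in fact more direct than the paper's. For item~(2), the paper instead invokes its general inclusion lemmas for transfinite products (Lemmas~\ref{lemma:incl:<}, \ref{lemma:incl:>}, \ref{lemma:incl:=}), instantiating them with $P = P' = \varepsilon$ and reducing to item~(1). Your argument bypasses that machinery entirely by exhibiting explicit witness words: one-letter words to get $F \subseteq F'$, and the constant word $x^{\gamma'}$ of length exactly $\gamma'$ to rule out $\gamma > \gamma'$. This is simpler and self-contained; the paper's route is natural only because it has already built those inclusion lemmas for the broader development and applies them uniformly. Items~(1) and~(3) are handled identically in both.
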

\begin{proof}
  (1) Since $F$ is non-empty, let us pick $x$ in $F$.  Since $\gamma$
  is non-trivial, the one-letter word $x$ is in $F^{<\gamma}$, whence
  the conclusion.
  
  (2) If $\gamma < \gamma'$ or if $\gamma = \gamma'$ is
  indecomposable, then by Lemma~\ref{lemma:incl:<},
  $F^{<\gamma} \subseteq {F'}^{<\gamma'}$ if and only if
  $F \subseteq F'$ and $\varepsilon \subseteq {F'}^{<\gamma'}$ (true),
  or $F \not\subseteq F'$ and
  $F^{<\gamma} \subseteq \varepsilon$ (false, by (1)).
  Hence $F^{<\gamma} \subseteq {F'}^{<\gamma'}$ if and only if
  $F \subseteq F'$ in this case.
  If $\gamma > \gamma'$, by Lemma~\ref{lemma:incl:>}
  $F^{<\gamma} \subseteq {F'}^{<\gamma'}$ reduces to
  $F^{<\gamma} \subseteq \varepsilon$, which is false by (1).
  If $\gamma = \gamma'$ is decomposable, then by
  Lemma~\ref{lemma:incl:=}, $F^{<\gamma} \subseteq {F'}^{<\gamma'}$ if
  and only if $F \subseteq F'$ and
  $\varepsilon \subseteq \varepsilon$, or $F \not\subseteq F'$ and
  $F^{<\gamma} \subseteq \varepsilon$; equivalently, if
  $F \subseteq F'$.

  (3) follows immediately from (2).
\end{proof}

\begin{lemma}
  \label{lemma:red:eps}
  The only reduced sequence of atoms with semantics $\{\epsilon\}$
  is $\varepsilon$.
\end{lemma}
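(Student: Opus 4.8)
The statement asks that a reduced sequence of atoms $\bP \eqdef \bA_1,\ldots,\bA_n$ whose product $P = A_1 A_2 \cdots A_n$ equals $\{\epsilon\}$ must be the empty sequence $\varepsilon$ (i.e.\ $n=0$). The one feature of ``reduced'' that matters here is clause~(1) of Definition~\ref{defn:reduced}: every $F_i$ is non-empty. (Without it, a single atom $(\emptyset,\omega)$ would have semantics $\{\epsilon\}$ yet not be $\varepsilon$, so the hypothesis cannot be weakened.) The plan is to argue by contraposition: assuming $n\geq 1$, I will exhibit a word of length $\geq 1$ in $P$, contradicting $P=\{\epsilon\}$.

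First I would recall that each $\bA_i = (F_i,\gamma_i)$ with $\gamma_i$ a non-trivial bound and, by reducedness, $F_i\neq\emptyset$. By Lemma~\ref{lemma:red:1}~(1), $F_i^{<\gamma_i}\neq\varepsilon$; concretely, picking $x_i\in F_i$, the one-letter word $x_i$ lies in $F_i^{<\gamma_i} = A_i$ because $\gamma_i$ is non-trivial (hence $\geq 2$). Then the concatenation $x_1 x_2\cdots x_n$ lies in $A_1 A_2\cdots A_n = P$ and has length $|x_1|+\cdots+|x_n| = n$.

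If $n\geq 1$ this is a word of length $\geq 1$, so $P\neq\{\epsilon\}$; contrapositively, $P=\{\epsilon\}$ forces $n=0$, i.e.\ $\bP=\varepsilon$. There is no real obstacle: the only point requiring attention is that the argument genuinely uses reducedness clause~(1) to rule out empty components, since a preatom $F^{<\gamma}$ with $F=\emptyset$ collapses to $\varepsilon$.
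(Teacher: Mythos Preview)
Your proof is correct and is essentially identical to the paper's: the paper also assumes $n\geq 1$, invokes Lemma~\ref{lemma:red:1}~(1) to get a non-empty word in each $A_i$, and concatenates them to obtain a non-empty word in $P$. You have simply unfolded the proof of Lemma~\ref{lemma:red:1}~(1) inline by exhibiting the one-letter words $x_i$.
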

\begin{proof}
  Let $\bP$ be a reduced sequence of atoms of length $n \geq 1$, say
  $\bA_1 \cdots \bA_n$.  By Lemma~\ref{lemma:red:1}~(1), each $A_i$
  contains a non-empty word, and their concatenation is a non-empty
  word in $P$.
\end{proof}

\begin{lemma}
  \label{lemma:red:2}
  For all atoms $A \eqdef F^{<\gamma}$, $B \eqdef {F'}^{<\gamma'}$,
  with $F, F' \neq \emptyset$, and all transfinite products $P$, $Q$, if
  $AP \subseteq BQ$ and $A \not\subseteq B$, then $AP \subseteq Q$.
\end{lemma}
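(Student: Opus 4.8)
The plan is to reduce everything to the inclusion characterizations of Section~\ref{sec:incl-transf-prod}, by a case analysis on how the bounds $\gamma$ and $\gamma'$ compare. First I would record, using Lemma~\ref{lemma:red:1}~(2) together with the non-emptiness of $F$ and $F'$, that the hypothesis $A \not\subseteq B$, i.e.\ $F^{<\gamma} \not\subseteq {F'}^{<\gamma'}$, amounts to saying that $F \not\subseteq F'$ or $\gamma > \gamma'$. This is the only role played by the hypothesis $A \not\subseteq B$, and it is exactly what is needed to discard the ``left-hand'' alternative in each of the three inclusion lemmas.

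Then I would split into the cases $\gamma > \gamma'$, $\gamma < \gamma'$, $\gamma = \gamma'$ indecomposable, and $\gamma = \gamma'$ decomposable. If $\gamma > \gamma'$, Lemma~\ref{lemma:incl:>} applies to $F^{<\gamma} P \subseteq {F'}^{<\gamma'} Q$: since $F \neq \emptyset$, its first alternative ($F$ empty) is impossible, so the second holds, giving $F^{<\gamma} P \subseteq Q$, which is $AP \subseteq Q$. If $\gamma \le \gamma'$, then by the first observation $A \not\subseteq B$ forces $F \not\subseteq F'$. If moreover $\gamma < \gamma'$ or $\gamma = \gamma'$ is indecomposable, Lemma~\ref{lemma:incl:<} applies, and since its first alternative requires $F \subseteq F'$ (which fails), its second alternative must hold, namely $F^{<\gamma} P \subseteq Q$. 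In the last case, $\gamma = \gamma'$ decomposable, Lemma~\ref{lemma:incl:=} applies: among its three alternatives, the first needs $F$ empty (excluded) and the second needs $F \subseteq F'$ (excluded), so the third holds, again $F^{<\gamma} P \subseteq Q$. In every case we conclude $AP \subseteq Q$.

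Once Lemmas~\ref{lemma:incl:<}, \ref{lemma:incl:>}, \ref{lemma:incl:=}, and \ref{lemma:red:1} are in hand, this is pure bookkeeping, so there is no genuine obstacle; the only points to watch are exhaustiveness of the four-way case split on $\gamma$ versus $\gamma'$, and the fact that in each of the three subcases with $\gamma \le \gamma'$ it is precisely the failure of $F \subseteq F'$ (and the non-emptiness of $F$) that eliminates the unwanted alternatives.
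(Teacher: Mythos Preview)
Your proposal is correct and follows essentially the same approach as the paper's own proof: translate $A \not\subseteq B$ via Lemma~\ref{lemma:red:1}~(2) into $F \not\subseteq F'$ or $\gamma > \gamma'$, then case-split on the relation between $\gamma$ and $\gamma'$ and invoke the appropriate one of Lemmas~\ref{lemma:incl:<}, \ref{lemma:incl:>}, \ref{lemma:incl:=}. The paper's version is merely more terse about which alternatives are being discarded in each sub-case.
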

\begin{proof}
  Since $A \not\subseteq B$, we have $F \not\subseteq F'$ or
  $\gamma > \gamma'$, by Lemma~\ref{lemma:red:1}~(2).

  If $\gamma > \gamma'$, then Lemma~\ref{lemma:incl:>} and
  $F^{\gamma} P \subseteq {F'}^{<\gamma'} Q$ imply
  $F^{<\gamma} P \subseteq Q$.  Let us therefore assume
  $\gamma \leq \gamma'$, and $F \not\subseteq F'$.  If
  $\gamma < \gamma'$ or $\gamma=\gamma'$ is indecomposable, then by
  Lemma~\ref{lemma:incl:<}, $F^{<\gamma} P \subseteq Q$ again.  If
  $\gamma=\gamma'$ is decomposable, then we reach the same
  conclusion by using Lemma~\ref{lemma:incl:=}.
\end{proof}

\begin{lemma}
  \label{lemma:red:3a}
  For any reduced sequence of atoms $\bA \bP$, $AP=A$ implies
  $\bP = \varepsilon$.
\end{lemma}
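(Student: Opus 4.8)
The plan is to fix notation $\bA=(F,\gamma)$ and $\bP=\bA_1\cdots\bA_n$ with $\bA_i=(F_i,\gamma_i)$, assume $AP=A$, and show $n=0$. By the definition of an atom together with clause~(1) of Definition~\ref{defn:reduced}, $F$ is a non-empty closed set and $\gamma$ is a non-trivial bound; moreover $\bP$, being a contiguous sub-sequence of the reduced sequence $\bA\bP$, is itself reduced, and every product of atoms contains $\epsilon$ (each $F_i^{<\gamma_i}$ contains the empty word, since $\gamma_i>0$), so if $n\ge 1$ the first atom $\bB\eqdef\bA_1=(F_1,\gamma_1)$ satisfies $B\subseteq P$. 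From $AP=A$ we get $AP\subseteq A$, and writing $A=F^{<\gamma}\varepsilon$ lets us invoke the inclusion lemmas of Section~\ref{sec:incl-transf-prod} with $F'\eqdef F$, $\gamma'\eqdef\gamma$, $P'\eqdef\varepsilon$. The first case is $\gamma$ decomposable: then $\gamma=\gamma'$ is decomposable, so Lemma~\ref{lemma:incl:=} applies to $F^{<\gamma}P\subseteq F^{<\gamma}\varepsilon$; alternative~(1) fails because $F\neq\emptyset$, alternative~(3) fails because $F\subseteq F'=F$, so alternative~(2) holds and yields $P\subseteq\varepsilon$, i.e.\ $P=\{\epsilon\}$. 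By Lemma~\ref{lemma:red:eps} the only reduced sequence with that semantics is $\varepsilon$, so $\bP=\varepsilon$.

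The remaining case is $\gamma$ indecomposable. Here I would only use the weaker consequence $P\subseteq A$, obtained by applying Lemma~\ref{lemma:incl:=>}~(1) to $F^{<\gamma}P\subseteq F^{<\gamma}\varepsilon$ (note this part needs no assumption on $\gamma$). Suppose, for contradiction, $n\ge 1$, with first atom $\bB=(F_1,\gamma_1)$. Then $F_1^{<\gamma_1}=B\subseteq P\subseteq A=F^{<\gamma}$, so Lemma~\ref{lemma:red:1}~(2) gives $F_1\subseteq F$ and $\gamma_1\le\gamma$. Now split on $\gamma_1$ versus $\gamma$: if $\gamma_1=\gamma$, then $\gamma=\gamma_1$ is indecomposable, so clause~(3) of Definition~\ref{defn:reduced} applied to the consecutive pair $\bA,\bB$ forces $F$ and $F_1$ incomparable, contradicting $F_1\subseteq F$; if $\gamma_1<\gamma$, then $\gamma$ is indecomposable and $\gamma>\gamma_1$, so clause~(4) forces $F\not\supseteq F_1$, again contradicting $F_1\subseteq F$. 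Hence $n=0$, i.e.\ $\bP=\varepsilon$.

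The main obstacle — and the reason reducedness cannot be dropped — is that $AP=A$ by itself does \emph{not} force $P=\{\epsilon\}$ when $\gamma$ is indecomposable (for instance $F^{<\omega}F^{<\omega}=F^{<\omega}$), so in that case one must extract the contradiction from Definition~\ref{defn:reduced}, and the delicate point is matching the correct clause~(3) or~(4) to the relation between $\gamma_1$ and $\gamma$ after having pinned down $F_1\subseteq F$ and $\gamma_1\le\gamma$. Symmetrically, clause~(2) of \emph{reduced} only constrains strictly \emph{increasing} exponents, so it is useless when $\gamma_1<\gamma$; this is precisely why the decomposable case is instead closed by the stronger conclusion $P=\{\epsilon\}$ that Lemma~\ref{lemma:incl:=} makes available there, rather than by a reducedness argument.
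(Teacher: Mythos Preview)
Your proof is correct and follows essentially the same approach as the paper: both split on whether $\gamma$ is decomposable (concluding $P\subseteq\varepsilon$ via Lemma~\ref{lemma:incl:=} and finishing with Lemma~\ref{lemma:red:eps}) or indecomposable (deriving $F_1\subseteq F$ and $\gamma_1\le\gamma$, then contradicting clauses~(3) or~(4) of Definition~\ref{defn:reduced} for the consecutive pair $\bA,\bA_1$). Your extraction of $F_1\subseteq F$ and $\gamma_1\le\gamma$ via $B\subseteq P\subseteq A$ and Lemma~\ref{lemma:red:1}(2) is a mild streamlining over the paper, which instead applies Lemmas~\ref{lemma:incl:<} and~\ref{lemma:incl:>} to $P=A'Q\subseteq A$ and must separately dispose of the case $\gamma'>\gamma$.
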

\begin{proof}
  By contradiction, let us assume that $\bP = \bA' \bQ$, and let us
  write $A$ as $F^{<\gamma}$ and $A'$ as ${F'}^{<\gamma'}$.  Since
  $AP \subseteq A$, either $\gamma$ is indecomposable and
  $P \subseteq A$ by Lemma~\ref{lemma:incl:<}, or $\gamma$ is
  decomposable and $P \subseteq \varepsilon$ by
  Lemma~\ref{lemma:incl:=}.  The latter is impossible by
  Lemma~\ref{lemma:red:eps}.  Hence $\gamma$ is indecomposable, and
  $P = A'Q \subseteq A$.

  If $\gamma' \leq \gamma$, then by Lemma~\ref{lemma:incl:<}
  $F' \subseteq F$ and $Q \subseteq A$, or $F' \not\subseteq F$ and
  $P \subseteq \varepsilon$.  The latter is impossible, as above.  The
  former is impossible, too, because $\bA \bA' \bQ$ is reduced, using
  Definition~\ref{defn:reduced}~(3), (4).  If $\gamma' > \gamma$, then
  $P=A'Q \subseteq \varepsilon$ by Lemma~\ref{lemma:incl:>}, which is
  impossible by Lemma~\ref{lemma:red:eps}.
\end{proof}

\begin{lemma}
  \label{lemma:red:3}
  For every reduced sequence of atoms $\bA \bP$ and for every atom
  $\bB \eqdef {F'}^{<\gamma'}$ with $F' \neq \emptyset$, if $AP = B$
  then $\bP = \varepsilon$.
\end{lemma}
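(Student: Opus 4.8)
The plan is to argue by induction on the number $n$ of atoms of $\bP$, reducing each case either to Lemma~\ref{lemma:red:3a} or to the induction hypothesis, with the inclusion lemmas of Section~\ref{sec:incl-transf-prod} doing all the bookkeeping. Write $A = F^{<\gamma}$ and $B = {F'}^{<\gamma'}$. If $\bP = \varepsilon$ there is nothing to prove, so assume $\bP = \bA_1 \bA_2 \cdots \bA_n$ with $n \geq 1$, where $\bA_1 = (F_1,\gamma_1)$, and derive a contradiction.

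\emph{First, $A \subseteq B$.} If not, then applying Lemma~\ref{lemma:red:2} to $AP = B = B\varepsilon$ yields $AP \subseteq \varepsilon$; but $F \neq \emptyset$ by Definition~\ref{defn:reduced}(1) and $\gamma$ is non-trivial, so $A \neq \varepsilon$ by Lemma~\ref{lemma:red:1}(1), while $A \subseteq AP$ since $\epsilon$ lies in each atom of $\bP$ (hence $\epsilon \in A_1 \cdots A_n$) --- a contradiction. Thus $F \subseteq F'$ and $\gamma \leq \gamma'$ by Lemma~\ref{lemma:red:1}(2). If in addition $F = F'$ and $\gamma = \gamma'$, then $A = B$, hence $AP = A$ and Lemma~\ref{lemma:red:3a} forces $\bP = \varepsilon$, contradicting $n \geq 1$. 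So henceforth $A \subsetneq B$: either $\gamma < \gamma'$, or $\gamma = \gamma'$ and $F \subsetneq F'$. Note that $\gamma > \gamma'$ is impossible.

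\emph{Case $\gamma = \gamma'$ decomposable.} Here $F \subseteq F'$ and $F \neq \emptyset$, so Lemma~\ref{lemma:incl:=} applied to $AP \subseteq B$ must fall in its clause~(2), which gives $A_1 A_2 \cdots A_n \subseteq \varepsilon$; this is impossible, as $A_1 \supsetneq \varepsilon$ by Lemma~\ref{lemma:red:1}(1) and $A_1 \subseteq A_1 A_2 \cdots A_n$. \emph{Remaining cases} ($\gamma < \gamma'$, or $\gamma = \gamma'$ indecomposable with $F \subsetneq F'$). I peel $A$ off the inclusion $B \subseteq AP = F^{<\gamma}(A_1 \cdots A_n)$ (recall $AP = B$), reading it with $B$ on the left: Lemma~\ref{lemma:incl:>} (when $\gamma < \gamma'$, using $F' \neq \emptyset$) or Lemma~\ref{lemma:incl:<} (when $\gamma = \gamma'$ is indecomposable, using $F' \not\subseteq F$) then forces $B \subseteq A_1 A_2 \cdots A_n$. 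Since also $A_1 A_2 \cdots A_n \subseteq AP = B$ (because $\epsilon \in A$), we obtain $A_1 A_2 \cdots A_n = B$. Now $\bA_1 \bA_2 \cdots \bA_n$ is reduced, being a contiguous subsequence of $\bA\bP$, and $A_1 (A_2 \cdots A_n) = B$, so the induction hypothesis gives $\bA_2 \cdots \bA_n = \varepsilon$; hence $n = 1$, $\bP = \bA_1$, and $A_1 = B$, so $F_1 = F'$ and $\gamma_1 = \gamma'$ by Lemma~\ref{lemma:red:1}(3). But $\bA\bA_1$ is a reduced two-atom sequence with $F \subseteq F' = F_1$, whereas Definition~\ref{defn:reduced}(2) (if $\gamma < \gamma_1$) and Definition~\ref{defn:reduced}(3) (if $\gamma = \gamma_1$ is indecomposable) both prohibit $F \subseteq F_1$ --- the desired contradiction, which completes the induction.

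I expect the only delicate part to be arranging the case split so that, in each regime, the hypothesis $A \subseteq B$ (i.e.\ $F \subseteq F'$) steers the relevant inclusion lemma into exactly the branch needed: clause~(2) of Lemma~\ref{lemma:incl:=} in the decomposable case, and the ``pass the whole of $B$ to the tail'' branch of Lemma~\ref{lemma:incl:>}/\ref{lemma:incl:<} in the other cases, after which the equality $A_1 \cdots A_n = B$ lets the induction close the loop against the reducedness of $\bA\bA_1$. Beyond this, no new combinatorics on transfinite words is needed --- everything is absorbed by the inclusion lemmas --- so I do not anticipate a genuine obstacle.
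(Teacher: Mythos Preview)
Your proof is correct and follows essentially the same route as the paper's: induction on length, the case $A=B$ dispatched by Lemma~\ref{lemma:red:3a}, otherwise $B\not\subseteq A$ forces $P=B$, the induction hypothesis collapses $\bP$ to a single atom equal to $\bB$, and reducedness of $\bA\bB$ yields the contradiction (with the decomposable $\gamma=\gamma'$ case handled via Lemma~\ref{lemma:incl:=}). The only cosmetic differences are that the paper invokes Lemma~\ref{lemma:red:2} once more to obtain $B\subseteq P$ uniformly from $B\not\subseteq A$, whereas you re-derive this by case analysis through Lemmas~\ref{lemma:incl:>} and~\ref{lemma:incl:<}, and that you treat the decomposable case before the inductive step rather than after; both reorderings are harmless.
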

\begin{proof}
  By induction on the number $n \geq 1$ of atoms in $\bA\bP$.  If
  $n=1$, this is vacuous, so let $n \geq 2$.  If $B \subseteq A$, then
  $B \subseteq A \subseteq AP =B$, so $A=B$.  Then $AP=A$, so
  $\bP = \varepsilon$ by Lemma~\ref{lemma:red:3a}.

  We now assume $B \not\subseteq A$, and we will show that this is
  impossible.  By Lemma~\ref{lemma:red:2} applied to $B \subseteq AP$,
  we have $B \subseteq P$.  Since $P \subseteq AP=B$, $B=P$.  By
  induction hypothesis, if we write $\bP$ as $\bA' \bQ$ where $\bA'$
  is an atom, then $\bQ = \varepsilon$, so $\bP = \bA'$.  Then $P=B$
  entails $\bP=\bA'=\bB$ by Lemma~\ref{lemma:red:1}~(3).  Let us write
  $\bA$ as $(F, \gamma)$ and $\bB$ as $(F', \gamma')$.  Since
  $A \subseteq AP = B$, Lemma~\ref{lemma:red:1}~(2) entails that
  $F \subseteq F'$ and $\gamma \leq \gamma'$.  But
  $\bA\bP = (F, \gamma) (F', \gamma')$ is reduced, and this
  contradicts Definition~\ref{defn:reduced}~(2) if $\gamma < \gamma'$,
  (3) if $\gamma=\gamma'$ is indecomposable.  Hence $\gamma=\gamma'$
  is decomposable.  Then
  $AP = F^{<\gamma} {F'}^{<\gamma} \subseteq B = F'^{<\gamma}$ implies
  ${F'}^{<\gamma} \subseteq \varepsilon$ by Lemma~\ref{lemma:incl:=},
  and that is impossible by Lemma~\ref{lemma:red:1}~(1).
%
  %
%
\end{proof}

\begin{proposition}
  \label{prop:reduced}
  For all reduced sequences of atoms $\bP$ and $\bQ$, $P=Q$ if and
  only if $\bP=\bQ$.
\end{proposition}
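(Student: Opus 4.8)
The implication ``$\bP=\bQ$ implies $P=Q$'' is immediate from our convention relating sequences of atoms to their products. For the converse I would argue by induction on the number of atoms in $\bP$. If $\bP=\varepsilon$ then $P=\{\epsilon\}$, so $Q=\{\epsilon\}$, and Lemma~\ref{lemma:red:eps} forces $\bQ=\varepsilon=\bP$. Otherwise write $\bP=\bA\bP'$ with $\bA=(F,\gamma)$; since $P$ contains a non-empty word (Lemma~\ref{lemma:red:1}~(1)) and $P=Q$, we have $\bQ\neq\varepsilon$, say $\bQ=\bB\bQ'$ with $\bB=(F',\gamma')$. The tails $\bP'$ and $\bQ'$ are reduced, being suffixes of reduced sequences. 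The plan is to show first that $\bA=\bB$, and then that $P'=Q'$; the induction hypothesis applied to $\bP'$ and $\bQ'$ then gives $\bP'=\bQ'$, whence $\bP=\bA\bP'=\bB\bQ'=\bQ$.

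The key ingredient is an auxiliary claim, which I would establish separately: \emph{if a reduced sequence of atoms has at least one atom, then its product differs from the product of the sequence obtained by deleting the first atom} (no atom is ``absorbed on the left'' of a reduced product). I prove this by induction on the length. Write the sequence as $\bC\bM$ with leading atom $\bC=(E,\eta)$; when $\bM=\varepsilon$, $CM=C$ contains a one-letter word (Lemma~\ref{lemma:red:1}~(1)) while $M=\{\epsilon\}$. When $\bM=\bD\bM'$ with $\bD=(G,\delta)$, suppose for contradiction $CM=M$, i.e.\ $E^{<\eta}G^{<\delta}M'\subseteq G^{<\delta}M'$. Depending on whether $\eta<\delta$, or $\eta=\delta$ is indecomposable, or $\eta>\delta$, or $\eta=\delta$ is decomposable, I apply Lemma~\ref{lemma:incl:<}, \ref{lemma:incl:>} or~\ref{lemma:incl:=} (alternatives requiring $E=\emptyset$ being excluded by Definition~\ref{defn:reduced}~(1)). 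Each remaining alternative is either ``$E\subseteq G$'' together with a relation between $\eta$ and $\delta$ that Definition~\ref{defn:reduced}~(2) or~(3) forbids for the reduced sequence $\bC\bM$, or it yields $G^{<\delta}M'\subseteq M'$, or it yields $E^{<\eta}G^{<\delta}M'\subseteq M'$, i.e.\ $CM\subseteq M'$. In the last case $M=CM\subseteq M'\subseteq G^{<\delta}M'=DM'=M$, so $M=M'$; in every surviving case one gets $DM'=M'$, contradicting the induction hypothesis for the shorter reduced sequence $\bD\bM'$. This proves the claim.

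For $\bA=\bB$: from $P=Q$ we have $AP'=BQ'$. If $A\not\subseteq B$, Lemma~\ref{lemma:red:2} gives $AP'\subseteq Q'$, and since $\epsilon\in B$ also $Q'\subseteq BQ'=AP'$, so $BQ'=AP'=Q'$, contradicting the claim for the reduced sequence $\bB\bQ'$. Symmetrically, $B\not\subseteq A$ would give $AP'=P'$, contradicting the claim for $\bA\bP'$. Hence $A\subseteq B$ and $B\subseteq A$, so $A=B$, and therefore $\bA=\bB$ by Lemma~\ref{lemma:red:1}~(3).

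It remains to deduce $P'=Q'$ from $AP'=AQ'$. If $\gamma$ is decomposable, Lemma~\ref{lemma:incl:=} (using $F\subseteq F$ and $F\neq\emptyset$) makes $AP'\subseteq AQ'$ equivalent to $P'\subseteq Q'$, and symmetrically, so $P'=Q'$. If $\gamma$ is indecomposable, I show $P'\subseteq Q'$, the converse being symmetric. If $\bP'=\varepsilon$ then $AP'=A=AQ'$ and Lemma~\ref{lemma:red:3a} forces $\bQ'=\varepsilon$, so $P'=Q'$. Otherwise write $\bP'=\bC\bP''$ with $\bC=(G,\delta)$; from $P'\subseteq AP'=AQ'=F^{<\gamma}Q'$ we get $G^{<\delta}P''\subseteq F^{<\gamma}Q'$, and Lemma~\ref{lemma:incl:<} (if $\delta\leq\gamma$; when $\delta=\gamma$ it is indecomposable, since $\gamma$ is) or Lemma~\ref{lemma:incl:>} (if $\delta>\gamma$) applies: the alternative ``$G\subseteq F$'' is excluded, for $\delta\leq\gamma$, by Definition~\ref{defn:reduced}~(3)--(4) for the reduced sequence $\bA\bP'$, and $G$ is non-empty, so the only surviving alternative is $G^{<\delta}P''\subseteq Q'$, i.e.\ $P'\subseteq Q'$. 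Thus $P'=Q'$, and the induction hypothesis applied to $\bP'$ and $\bQ'$ completes the proof. The main obstacle is the left-absorption claim: it requires the full case split on the two leading bounds, with each case either contradicting a reducedness condition (Definition~\ref{defn:reduced}~(2)--(4)) or reducing to a strictly shorter instance; a milder form of the same analysis recurs in establishing $P'\subseteq Q'$.
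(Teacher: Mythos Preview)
Your proof is correct, and it takes a genuinely different route from the paper's. The paper inducts on $|\bP|+|\bQ|$; in the main case $|\bP|,|\bQ|\geq 2$ it writes $\bP=\bA_1\bA_2\bP'$, $\bQ=\bB_1\bB_2\bQ'$, and when $A_1\not\subseteq B_1$ it uses Lemma~\ref{lemma:red:2} together with the \emph{induction hypothesis} (on the smaller sum) to conclude the syntactic equality $\bP=\bB_2\bQ'$, then derives a contradiction from lengths and a further case analysis. You instead isolate a clean standalone lemma---no leading atom of a reduced sequence is absorbed on the left, i.e.\ $CM\neq M$ for reduced $\bC\bM$---prove it by its own short induction, and use it to force $\bA=\bB$ directly; this lets you induct on $|\bP|$ alone. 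Your auxiliary claim is the natural left-dual of the paper's Lemma~\ref{lemma:red:3a} (which handles absorption on the right), and once it is in hand the remaining step ``$AP'=AQ'\Rightarrow P'=Q'$'' is handled just as the paper does, by splitting on whether $\gamma$ is decomposable (Lemma~\ref{lemma:incl:=}) or indecomposable (reducedness conditions (3)--(4) together with Lemmas~\ref{lemma:incl:<}, \ref{lemma:incl:>}). The gain of your approach is modularity and a simpler induction measure; the paper's approach avoids stating and proving the extra lemma but pays for it with a more intricate main argument. One small point worth making explicit in your write-up: in the symmetric half of the ``$P'\subseteq Q'$'' argument you use that $\bA\bQ'$ is reduced, which holds because $\bA=\bB$ and $\bB\bQ'=\bQ$ is reduced by hypothesis.
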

\begin{proof}
  The `if' direction is trivial.  We show that $P=Q$ implies $\bP=\bQ$
  by induction on the sum $|\bP|+|\bQ|$ of the sizes of $\bP$ and
  $\bQ$, where by
  size we mean number of atoms.  Henceforth we assume $P=Q$.

  If $|\bP|=0$, then $P=Q=\{\epsilon\}$.  By
  Lemma~\ref{lemma:red:eps}, $\bQ = \varepsilon = \bP$.  The situation
  is symmetric if $|\bQ|=0$.  Let us assume $|\bP|, |\bQ| \geq 1$. If
  $|\bQ|=1$, $\bQ$ is an atom $\bB$.  By Lemma~\ref{lemma:red:3}, we
  must have $|\bP|=1$.  Then $P=Q$ implies $\bP=\bQ$ by
  Lemma~\ref{lemma:red:1}~(3).  Similarly if $|\bP|=1$.  The
  interesting case is the remaining one: $|\bP|, |\bQ| \geq 2$.  Let
  us write $\bP$ as $\bA_1 \bA_2 \bP'$ and $\bQ$ as
  $\bB_1 \bB_2 \bQ'$.

  We first claim that if $A_1 \not\subseteq B_1$, then
  $\bP = \bB_2 \bQ'$.  Indeed, under that assumption, and since
  $P = A_1 A_2 P' \subseteq Q = B_1 B_2 Q'$, we obtain
  $P \subseteq B_2 Q'$ by Lemma~\ref{lemma:red:2}.  In turn,
  $B_2 Q' \subseteq B_1 B_2 Q' = Q = P$, so $P = B_2 Q'$.  The
  induction hypothesis then yields $\bP = \bB_2 \bQ'$.  Similarly, if
  $B_1 \not\subseteq A_1$ then $\bQ = \bA_2 \bP'$.

  It follows that we cannot have $A_1 \not\subseteq B_1$ and
  $B_1 \not\subseteq A_1$.  Otherwise,
  $\bQ = \bB_1 \bB_2 \bQ' = \bB_1 \bP = \bB_1 \bA_1 \bA_2 \bP' = \bB_1
  \bA_1 \bQ$, which is impossible since
  $|\bB_1 \bA_1 \bQ| \neq |\bQ|$.

  Hence $A_1$ is included in $B_1$, or conversely.  Without loss of
  generality, let us assume $B_1 \subseteq A_1$.  We claim that, in
  fact, $A_1=B_1$.  We reason by contradiction, and we assume
  $A_1 \not\subseteq B_1$. Then we have seen that $\bP = \bB_2 \bQ'$.
  By syntactic matching, $\bA_1= \bB_2$ (and $\bQ' = \bA_2 \bP'$).
  Let us write $\bB_1$ as $(F, \gamma)$ and $\bB_2$ as
  $(F', \gamma')$.  Since $B_1 \subseteq A_1=B_2$, $F \subseteq F'$
  and $\gamma \leq \gamma'$ by Lemma~\ref{lemma:red:1}~(2).  By
  Definition~\ref{defn:reduced}~(2) and~(3) applied to
  $\bB_1 \bB_2 \bQ'$, it is impossible that $\gamma < \gamma'$, or
  that $\gamma=\gamma'$ is indecomposable.  Hence $\gamma=\gamma'$ is
  decomposable.  Now $Q = B_1 B_2 Q' = F^{<\gamma} B_2 Q'$ is included
  in $P = A_1 A_2 P' = B_2 A_2 P' = {F'}^{<\gamma} A_2 P'$ (since
  $\bA_1=\bB_2$ and $\gamma=\gamma'$), so $B_2 Q' \subseteq A_2 P'$ by
  Lemma~\ref{lemma:incl:=}~(2).
  Since $A_2 P' \subseteq P = B_2 Q'$ (because $\bP = \bB_2 \bQ'$),
  $A_2 P' = B_2 Q'$.  By induction hypothesis,
  $\bA_2 \bP' = \bB_2 \bQ'$, so $\bA_2 \bP' = \bP$. This is impossible
  since $\bP = \bA_1 \bA_2 \bP'$.
  Having reached a contradiction, we
  conclude that $A_1=B_1$, so $\bA_1 = \bB_1$ by
  Lemma~\ref{lemma:red:1}~(3).

  We now claim that $A_2 P' \subseteq B_2 Q'$.  We know that
  $P = A_1 A_2 P'$ is included in $Q = B_1 B_2 Q' = A_1 B_2 Q'$ (since
  $A_1=B_1$).  Let us write $\bA_1$ as $(F, \gamma)$ and $\bA_2$ as
  $(F', \gamma')$.  If $\gamma$ is decomposable, then
  $A_2 P' \subseteq B_2 Q'$ by Lemma~\ref{lemma:incl:=}~(2).  Let
  therefore $\gamma$ be indecomposable.  Since $\bA_1 \bA_2 \bP'$ is
  reduced, we cannot have $F \supseteq F'$ and $\gamma \geq \gamma'$,
  by Definition~\ref{defn:reduced}~(3) and~(4), so
  $A_2 \not\subseteq A_1$, by Lemma~\ref{lemma:red:1}~(2).  Then
  $A_2 P' \subseteq A_1 A_2 P' \subseteq A_1 B_2 Q'$,
  so $A_2 P' \subseteq B_2 Q'$ by Lemma~\ref{lemma:red:2}.



  Symmetrically, $B_2 Q'$ is included in $A_2 P'$, so
  $A_2 P' = B_2 Q'$. By the induction hypothesis
  $\bA_2 \bP' = \bB_2 \bQ'$. We remember that $\bA_1 = \bB_1$, so
  $\bP = \bQ$. \end{proof}

We can always rewrite any transfinite product into a reduced product
with the same semantics, using Lemma~\ref{lemma:prod:equal}, whence the
following.
\begin{corollary}
  \label{corl:reduced}
  Every transfinite product is equal to $P$ for some unique reduced
  sequence of atoms $\bP$.
\end{corollary}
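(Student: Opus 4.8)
The uniqueness half is immediate from Proposition~\ref{prop:reduced}: if $\bP$ and $\bQ$ are reduced sequences of atoms with $P = Q$, then $\bP = \bQ$. So the whole task reduces to \emph{existence}, namely showing that an arbitrary transfinite product, presented as a set $A_1 A_2 \cdots A_n$ of atoms $\bA_i = (F_i,\gamma_i)$, equals $P$ for some reduced $\bP$. The plan is a terminating rewriting procedure on sequences of atoms, with the number of atoms as a measure that strictly decreases at every step; since no step changes the semantics, the final sequence, which is reduced precisely because no step applies, has the required property. The base case $n = 0$ is $\varepsilon$, which is already reduced.

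First I would eliminate empty factors: if some $F_i = \emptyset$ then $A_i = \emptyset^{<\gamma_i} = \varepsilon$ (recall $\gamma_i$ is a non-trivial bound, and $\gamma_i = \omega^0+1$ forces $F_i$ irreducible, hence non-empty), so deleting $\bA_i$ preserves the semantics and lowers the atom count; after repeating this, condition~(1) of Definition~\ref{defn:reduced} holds. Next I would scan consecutive pairs $\bA_i\bA_{i+1}$ for a failure of conditions~(2), (3), or~(4), and collapse the offending pair with Lemma~\ref{lemma:prod:equal}. Concretely: if $\gamma_i < \gamma_{i+1}$ with $F_i \subseteq F_{i+1}$, or $\gamma_i = \gamma_{i+1}$ is indecomposable with $F_i \subseteq F_{i+1}$, then part~(1) gives $F_i^{<\gamma_i} F_{i+1}^{<\gamma_{i+1}} = F_{i+1}^{<\gamma_{i+1}}$ and I delete $\bA_i$; if instead $\gamma_i = \gamma_{i+1}$ is indecomposable with $F_{i+1} \subseteq F_i$, or $\gamma_i$ is indecomposable and $\gamma_i > \gamma_{i+1}$ with $F_{i+1} \subseteq F_i$, then part~(2), applied with $\gamma' \eqdef \gamma_i$ indecomposable, $\gamma \eqdef \gamma_{i+1} \leq \gamma_i$, $F \eqdef F_{i+1} \subseteq F' \eqdef F_i$, gives $F_i^{<\gamma_i} F_{i+1}^{<\gamma_{i+1}} = F_i^{<\gamma_i}$ and I delete $\bA_{i+1}$. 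Each such step keeps the remaining $F_j$ non-empty, keeps the product's semantics, and lowers the atom count, so the procedure terminates at a sequence which, having no empty factor and no violation of conditions~(2), (3), (4), is reduced.

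I do not expect a genuine obstacle: the measure is simply the atom count, so termination and the attendant induction are trivial. The only point demanding care is the bookkeeping in the previous paragraph, namely checking that the negation of each clause of Definition~\ref{defn:reduced} is exactly one of the configurations handled by Lemma~\ref{lemma:prod:equal}, and that the lemma's hypotheses ($F$ and $F'$ non-empty, $\gamma \leq \gamma'$, and $\gamma'$ indecomposable when part~(2) is invoked) are all in force after the empty-factor cleanup. With that in place, the existence claim follows, and together with uniqueness it proves the corollary.
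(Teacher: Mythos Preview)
Your proposal is correct and follows exactly the approach the paper sketches in one sentence (``We can always rewrite any transfinite product into a reduced product with the same semantics, using Lemma~\ref{lemma:prod:equal}''): you have simply spelled out the rewriting procedure and its termination measure, and verified that each violation of Definition~\ref{defn:reduced} is handled by one of the two clauses of Lemma~\ref{lemma:prod:equal}.
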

Corollary~\ref{corl:reduced} allows us to conflate the notions of
transfinite product and of reduced sequence of atoms.  By abuse of
language, we will call \emph{reduced product} any transfinite product
$P$ written in such a way that $\bP$ is reduced.  A \emph{reduced
  $\alpha$-product} is an $\alpha$-product
that is reduced in this sense.

\section{Well-foundedness}
\label{sec:well-foundedness}

The \emph{rank} (or \emph{height}) of an element $x$ in a well-founded
poset $P$ is defined by well-founded induction as the least ordinal
strictly larger than the ranks of all elements $y < x$. We write
$||F||$ for the rank of $F$ in the lattice of closed subsets of a
Noetherian space $X$. For any Noetherian space $F$, $||F||$ is the
\emph{stature} of $F$ \cite{GLL:stature}, generalizing the notion of
the same name on wqos \cite{BG:stature}.

Given any two ordinals
$\alpha \eqdef \omega^{\alpha_1} + \cdots + \omega^{\alpha_m}$ and
$\beta \eqdef \omega^{\beta_1} + \cdots + \omega^{\beta_n}$ in Cantor
normal form, their \emph{natural sum} $\alpha \oplus \beta$ is defined
as $\omega^{\gamma_1} + \cdots + \omega^{\gamma_{m+n}}$, where
$\gamma_1 \geq \cdots \geq \gamma_{m+n}$ is the list obtained by
sorting the list
$\alpha_1, \cdots, \alpha_m, \beta_1, \cdots, \beta_n$ in decreasing
order. This operation is associative and commutative, and strictly
monotonic in both arguments.

An ordinal $\delta$ is \emph{critical} if and only if
$\omega^\delta = \delta$. For every ordinal $\alpha$, let
$\adjsw\alpha$ be $\alpha+1$ if $\alpha = \delta+n$ for some critical
ordinal $\delta$ and some $n \in \nat$, and $\alpha$ otherwise. Then
$\alpha < \omega^{\adjsw\alpha}$, and $\alpha \mapsto \adjsw \alpha$
is strictly monotonic \cite[Lemmata~12.3 and~12.4]{GLL:stature}. For
every proper bound $\gamma$, we define $\flt\gamma$ as the rank of
$\gamma$ in the poset of all proper bounds less than or equal to
$\gamma$. Explicitly, and writing $n$ for a natural number and
$\lambda$ for a limit ordinal, $\flt {\omega^n} = 2n-2$ and
$\flt {\omega^n+1} = 2n-1$ if $n \geq 1$,
$\flt {\omega^{\lambda+n}} = \lambda+2n$, and
$\flt {\omega^{\lambda+n}+1} = \lambda+2n+1$.

\begin{definition}
  \label{defn:phi}
  Let $X$ be a Noetherian space.  For every atom $F^{<\gamma}$,
  let $\varphi (F^{<\gamma})$ be $||F||$ if $\gamma = \omega^0+1$, and
  $\omega^{\adjsw{(||F|| \oplus \flt\gamma)}}$ otherwise.  For every
  reduced product $P \eqdef A_1 \cdots A_n$, let
  $\varphi (P) \eqdef \bigoplus_{i=1}^n \varphi (A_i)$.
\end{definition}

\begin{proposition}
  \label{prop:phi}
  Let $X$ be a Noetherian space.  For all reduced products $P$ and
  $P'$, $P \subseteq P'$ implies $\varphi (P) \leq \varphi (P')$, and
  $P \subsetneq P'$ implies $\varphi (P) < \varphi (P')$.
\end{proposition}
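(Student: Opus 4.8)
The natural approach is induction on the sum $|\bP| + |\bP'|$ of the numbers of atoms, mirroring the structure of the proof of Proposition~\ref{prop:reduced}. First I would dispose of the base cases: if $\bP = \varepsilon$ then $\varphi(P) = 0 \leq \varphi(P')$, and moreover $P \subsetneq P'$ forces $P' \neq \varepsilon$, hence (by Lemma~\ref{lemma:red:1}~(1) and Lemma~\ref{lemma:red:eps}) $\varphi(P')$ is a strictly positive ordinal of the form $\varphi(A'_1) \oplus \cdots$; so $\varphi(P) = 0 < \varphi(P')$. So assume $\bP = \bA\,\bQ$ with $\bA = F^{<\gamma}$, $F \neq \emptyset$, and $\gamma$ a non-trivial bound. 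If $\bP' = \varepsilon$ then $P \subseteq P'$ is impossible by Lemma~\ref{lemma:red:eps}, so write $\bP' = \bA'\,\bQ'$ with $\bA' = {F'}^{<\gamma'}$, $F' \neq \emptyset$.

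The heart of the argument is a case split on whether $A \subseteq A'$. If $A \not\subseteq A'$, then by Lemma~\ref{lemma:red:2}, $AQ \subseteq Q'$, i.e.\ $P \subseteq Q'$, and since $Q'$ is a reduced product with strictly fewer atoms than $P'$, the induction hypothesis gives $\varphi(P) \leq \varphi(Q') < \varphi(Q') \oplus \varphi(A') = \varphi(P')$ (using strict monotonicity of $\oplus$ and $\varphi(A') > 0$, which holds because $F' \neq \emptyset$ makes $\|F'\| \geq 1$ when $\gamma' = \omega^0+1$, and $\varphi(A') = \omega^{\adjsw{(\cdots)}} \geq \omega^0 = 1$ otherwise). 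That settles both claims at once in this subcase. If instead $A \subseteq A'$, then by Lemma~\ref{lemma:red:1}~(2) we have $F \subseteq F'$ and $\gamma \leq \gamma'$; here I would want to show $\varphi(A) \leq \varphi(A')$, together with $Q \subseteq {F'}^{<\gamma'} Q' $ and then peel off. The cleaner route, following Proposition~\ref{prop:reduced} closely, is: from $A \subseteq A'$ and $P = AQ \subseteq A'Q'$ use Lemma~\ref{lemma:incl:<} (case $\gamma < \gamma'$ or $\gamma=\gamma'$ indecomposable, which is forced since $\bA'\bQ'$ reduced rules out nothing here — wait, one must be careful: $\gamma=\gamma'$ decomposable is handled by Lemma~\ref{lemma:incl:=}~(2), giving $Q \subseteq Q'$ directly). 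In every subcase one obtains either $Q \subseteq Q'$ (when $\gamma = \gamma'$) or $Q \subseteq A'Q' = P'$ with $|\bQ| + |\bP'| < |\bP| + |\bP'|$. The first subcase plus the induction hypothesis gives $\varphi(Q) \leq \varphi(Q')$, and then $\varphi(P) = \varphi(A) \oplus \varphi(Q) = \varphi(A') \oplus \varphi(Q) \leq \varphi(A') \oplus \varphi(Q') = \varphi(P')$, with strictness propagating: if $P \subsetneq P'$ then either $Q \subsetneq Q'$ (apply IH strictly) or $Q = Q'$ and $A \subsetneq A'$, the latter needing $\varphi(A) < \varphi(A')$.

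The main obstacle I anticipate is precisely the atom-level strict-monotonicity claim: $A = F^{<\gamma} \subsetneq {F'}^{<\gamma'} = A'$ implies $\varphi(A) < \varphi(A')$. By Lemma~\ref{lemma:red:1}~(2,3), $A \subsetneq A'$ means $F \subseteq F'$, $\gamma \leq \gamma'$, and $(F,\gamma) \neq (F',\gamma')$. One must check that $\varphi$ is strictly monotonic on atoms, handling the three flavours of $\gamma$ vs $\gamma'$: when both equal $\omega^0+1$, $\varphi(A) = \|F\| < \|F'\| = \varphi(A')$ since $F \subsetneq F'$ gives strict rank increase; when $\gamma' = \omega^0+1$ but $\gamma \neq \gamma'$ this is vacuous since $\gamma \leq \gamma' = \omega^0+1$ forces $\gamma = \omega^0+1$; otherwise $\varphi$ values are $\omega^{\adjsw{(\|F\| \oplus \flt\gamma)}}$ and $\omega^{\adjsw{(\|F'\| \oplus \flt{\gamma'})}}$, and since $\|F\| \leq \|F'\|$, $\flt\gamma \leq \flt{\gamma'}$, with at least one inequality strict (either $F \subsetneq F'$ so $\|F\| < \|F'\|$, or $\gamma < \gamma'$ so $\flt\gamma < \flt{\gamma'}$ as $\flt{}$ is a rank function hence strictly monotone on proper bounds), strict monotonicity of $\oplus$ and of $\alpha \mapsto \omega^{\adjsw\alpha}$ (cited from \cite{GLL:stature}) yields $\varphi(A) < \varphi(A')$. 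One subtlety: when $\gamma' = \omega^0+1$ is non-proper, $\flt{\gamma'}$ is undefined, but then $\varphi(A')= \|F'\|$ and the argument above reduces to the first flavour; when $\gamma$ is the trivial-looking $\omega^0+1$ but $\gamma'$ is proper, we have $\flt{}$ only for $\gamma'$ and compare $\|F\|$ with $\omega^{\adjsw{(\|F'\|\oplus\flt{\gamma'})}}$, which is a strictly larger ordinal since the right side is $\geq \omega$ while $\|F\|$ could be anything — this needs $\|F\| \leq \|F'\| < \omega^{\adjsw{(\|F'\|\oplus\flt{\gamma'})}}$, true because $\alpha < \omega^{\adjsw\alpha}$ and $\|F'\| \leq \|F'\| \oplus \flt{\gamma'}$. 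So the atom case goes through by carefully threading the two cited monotonicity facts; the product case is then routine bookkeeping with $\oplus$.
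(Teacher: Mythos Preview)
There is a genuine gap in the case $A \subseteq A'$ with $\gamma < \gamma'$ or $\gamma = \gamma'$ indecomposable. Here Lemma~\ref{lemma:incl:<}~(1) only yields $Q \subseteq {F'}^{<\gamma'}Q' = P'$, \emph{not} $Q \subseteq Q'$. You note this yourself and record that the induction hypothesis applies since $|\bQ| + |\bP'| < |\bP| + |\bP'|$, but the induction hypothesis then gives only $\varphi(Q) \leq \varphi(P')$, and you never explain how to recover the missing summand $\varphi(A)$: from $\varphi(Q) \leq \varphi(P')$ alone one cannot conclude $\varphi(P) = \varphi(A) \oplus \varphi(Q) \leq \varphi(P')$, since $\varphi(A) > 0$. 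You only elaborate on ``the first subcase'', where $Q \subseteq Q'$ (and even there the displayed equality $\varphi(A) \oplus \varphi(Q) = \varphi(A') \oplus \varphi(Q)$ should be $\leq$, since $F \subsetneq F'$ is still possible when $\gamma=\gamma'$ is decomposable).

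The missing idea, which the paper supplies, is that one cannot peel atoms of $P$ one at a time in this case. Instead one identifies the \emph{maximal} prefix $A_1, \ldots, A_k$ of atoms of $P$ that are included in $A'_1$, and sets $R \eqdef A_{k+1} \cdots A_n$. Then either $R = \varepsilon$ or $A_{k+1} \not\subseteq A'_1$, so Lemma~\ref{lemma:red:2} gives $R \subseteq Q'$, and the induction hypothesis yields $\varphi(R) \leq \varphi(Q')$. The crucial observation is that when $\gamma'$ is a proper bound, $\varphi(A'_1) = \omega^{\adjsw{(||F'|| \oplus \flt{\gamma'})}}$ is an \emph{indecomposable} ordinal, so $\varphi(A_1) \oplus \cdots \oplus \varphi(A_k) < \varphi(A'_1)$ whenever each $A_i \subsetneq A'_1$. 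This absorbs the whole block of small atoms at once and gives $\varphi(P) = \varphi(A_1) \oplus \cdots \oplus \varphi(A_k) \oplus \varphi(R) < \varphi(A'_1) \oplus \varphi(Q') = \varphi(P')$. The residual possibility that some $A_i = A'_1$ forces $k=1$ by the reducedness conditions (Definition~\ref{defn:reduced}~(3),~(4)) and is handled separately. When $\gamma'$ is decomposable there is an analogous block argument with a slightly different notion of ``small'' and one extra subcase. Your atom-level strict monotonicity argument is fine and matches the paper's; it is the product step that needs this block-absorption trick.
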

\begin{proof}
  We first claim that $\varphi$ is strictly monotonic on reduced
  atoms: for all atoms $F^{<\gamma}$ and ${F'}^{<\gamma'}$ with
  $F, F' \neq \emptyset$ and $F^{<\gamma} \subsetneq {F'}^{<\gamma'}$,
  $\varphi (F^{<\gamma}) < \varphi ({F'}^{<\gamma'})$.  By
  Lemma~\ref{lemma:red:1}, $F \subseteq F'$ and $\gamma \leq \gamma'$,
  and not both are equalities.  If $\gamma = \gamma' = \omega^0+1$ and
  $F \subsetneq F'$, then
  $\varphi (F^{<\gamma}) = ||F|| < ||F'|| = \varphi
  ({F'}^{<\gamma'})$.  If $\gamma = \omega^0+1 < \gamma'$, then
  $\varphi (F^{<\gamma}) = ||F|| \leq ||F|| \oplus \flt\gamma < ||F'||
  \oplus \flt{\gamma'} < \omega^{\adjsw{(||F'|| \oplus
      \flt{\gamma'})}} = \varphi ({F'}^{<\gamma'})$.  If both $\gamma$
  and $\gamma'$ are proper, then
  $\varphi (F^{<\gamma}) = \omega^{\adjsw{(||F|| \oplus \flt\gamma)}}
  < \omega^{\adjsw{(||F'|| \oplus \flt{\gamma'})}} = \varphi
  ({F'}^{<\gamma'})$ since $\oplus$, $\flt\_$, $\adjsw\_$ and $||\_||$
  are strictly monotonic.

  
  We prove the proposition by induction on the sum of the lengths of
  $P$ and $P'$. Let $P \subseteq P'$. If $P = \varepsilon$, then
  $\varphi (P)=0 \leq \varphi (P')$.  If additionally
  $P' \neq \varepsilon$, then $\varphi (P')$ is the natural sum of at
  least one term, and all those terms are non-zero: they are of the
  form $\varphi (F^{<\gamma})$, and
  when $\gamma = \omega^0+1$, $\varphi (F^{<\gamma}) = ||F|| \neq 0$
  since $F \neq \emptyset$, otherwise
  $\varphi (F^{<\gamma})$ is a power of $\omega$.

  We now assume that $P \neq \varepsilon$. We write $P$ as
  $A_1 \cdots A_n$, where $A_1$, \ldots, $A_n$ are atoms and
  $n \geq 1$. Let also $Q \eqdef A_2 \cdots A_n$, so $P = A_1 Q$.
  Since $P \subseteq P'$, $P' \neq \varepsilon$, so
  $P'=A'_1 Q'$ for some atom $A'_1$ and some (reduced)
  product $Q'$.  We write $A'_1$ as ${F'}^{<\gamma'}$, and
  $A_i$ as ${F_i}^{<\gamma_i}$ for each $i$, $1\leq i\leq n$.

  If $A_1 \not\subseteq A'_1$, then Lemma~\ref{lemma:red:2} entails
  that $P = A_1 Q \subseteq Q'$.  By induction hypothesis,
  $\varphi (P) \leq \varphi (Q')$.  Now
  $\varphi (P') = \varphi (A'_1) \oplus \varphi (Q') > \varphi (Q')$,
  so $\varphi (P) < \varphi (P')$.  We turn to the other cases: from
  now on, $A_1 \subseteq A'_1$.

  \emph{If $\gamma'$ is indecomposable.}  We say that $A_i$ is
  \emph{small} if $A_i \subseteq A'_1$, equivalently
  $F_i \subseteq F'$ and $\gamma_i \leq \gamma'$, by
  Lemma~\ref{lemma:red:1}~(2).  $A_1$ is small.  Let $k$ be largest
  such that $A_1$, \ldots, $A_k$ are small, and
  $R \eqdef A_{k+1} \cdots A_n$.  Then $R \subseteq Q'$.  This is
  clear if $R = \varepsilon$; otherwise $k < n$,
  $A_{k+1} \not\subseteq A'_1$, then
  $R \subseteq P \subseteq P' = A'_1 Q'$ implies $R \subseteq Q'$ by
  Lemma~\ref{lemma:red:2}.  By the induction hypothesis,
  $\varphi (R) \leq \varphi (Q')$.
  
  If some $A_i$ with $1\leq i\leq k$ is equal to
  $A'_1 = {F'}^{<\gamma'}$, then $\gamma_i=\gamma'$ is indecomposable,
  and $F_i = F'$.  We cannot have $i \geq 2$, since that would
  contradict Definition~\ref{defn:reduced}~(2) or~(3) at positions
  $i-1$ and $i$; similarly, $i \leq k-1$ would contradict
  Definition~\ref{defn:reduced}~(4) or~(3) at positions $i$ and
  $i+1$. Hence $k=1$. Then
  $\varphi (P) = \varphi (A_1) \oplus \varphi (R) \leq \varphi (A'_1)
  \oplus \varphi (Q') = \varphi (P')$.  Additionally, if $P \neq P'$,
  then (since $k=1$ and $A_1=A'_1$) $R$ is strictly included in $Q'$,
  so $\varphi (R) < \varphi (Q')$ by induction hypothesis, from which
  $\varphi (P) < \varphi (P')$ follows.
  
  Otherwise, $k \geq 1$ and $A_1$, \ldots, $A_k$ are all strictly
  included in $A'_1$.  Then $\varphi (A_i) < \varphi (A'_1)$ for every
  $i$ with $1\leq i\leq k$.  Since $\gamma'$ is indecomposable,
  $\gamma' \neq \omega^0+1$, so
  $\varphi (A'_1) = \omega^{\adjsw{(||F'|| \oplus \flt{\gamma'})}}$.
  The latter is indecomposable, so
  $\varphi (A_1) \oplus \cdots \oplus \varphi (A_k) < \varphi (A'_1)$.
  Together with $\varphi (R) \leq \varphi (Q')$, this implies that
  $\varphi (P) = \varphi (A_1) \oplus \cdots \oplus \varphi (A_k)
  \oplus \varphi (R) < \varphi (A'_1) \oplus \varphi (Q') = \varphi
  (P')$.

  \emph{If $\gamma'$ is decomposable.}  In that case, we say that
  $A_i$ is \emph{small} if and only if $F_i \subseteq F'$ and
  $\gamma_i < \gamma'$ (not $\leq$).  Let $k$ be largest such that
  $A_1$, \ldots, $A_k$ are small, and $R \eqdef A_{k+1} \cdots A_n$.
  $A_1$, \ldots, $A_k$ are all strictly included in $A'_1$, so
  $\varphi (A_i) < \varphi (A'_1)$ for every $i$ with $1\leq i\leq k$.
  If $k \geq 1$, then $\gamma_1 < \gamma'$, so
  $\gamma' \neq \omega^0+1$, and therefore $\varphi (A'_1)$ is
  indecomposable.  Hence
  $\varphi (A_1) \oplus \cdots \oplus \varphi (A_k) < \varphi (A'_1)$.
  If $k=0$, the same inequality holds, vacuously.
  
  If $R = \varepsilon$, then
  $\varphi (P) = \varphi (A_1) \oplus \cdots \oplus \varphi (A_k) <
  \varphi (A'_1) \leq \varphi (P')$.  We now assume
  $R \neq \varepsilon$.  Then, $k < n$, and $A_{k+1}$ is not small.

  If $A_{k+1} \not\subseteq A'_1$, then
  $R \subseteq P \subseteq P' = A'_1 Q'$ implies $R \subseteq Q'$ by
  Lemma~\ref{lemma:red:2}, hence $\varphi (R) \leq \varphi (Q')$ by
  induction hypothesis.  Then
  $\varphi (P) = \varphi (A_1) \oplus \cdots \oplus \varphi (A_k)
  \oplus \varphi (R) < \varphi (A'_1) \oplus \varphi (Q') = \varphi
  (P')$.

  There remains one case, where $A_{k+1} \subseteq A'_1$ and $A_{k+1}$
  is not small.  Then $F_{k+1} \subseteq F'$ and
  $\gamma_{k+1}=\gamma'$, which is decomposable.  Let
  $R' \eqdef A_{k+2} \cdots A_n$.
  $R =A_{k+1} R' \subseteq P \subseteq P' = A'_1 Q'$ implies
  $R' \subseteq Q'$ by Lemma~\ref{lemma:incl:=}.  By induction
  hypothesis, $\varphi (R') \leq \varphi (Q')$.  If $k=0$, then
  $P = A_1 R'$, $P' = A'_1 Q'$, $A_1 \subseteq A'_1$, and
  $R' \subseteq Q'$, so
  $\varphi (P) = \varphi (A_1) \oplus \varphi (R') \leq \varphi (A'_1)
  \oplus \varphi (Q') = \varphi (P')$; additionally, if
  $P \subsetneq P'$ then $A_1 \subsetneq A'_1$ or $R' \subsetneq Q'$,
  which implies $\varphi (P) < \varphi (P')$.  Let us now assume
  $k \geq 1$.  We claim that the inclusion $A_{k+1} \subseteq A'_1$ is
  strict: if $A_{k+1}=A'_1$, then $A_k \subseteq A'_1 = A_{k+1}$ and
  $\gamma_k < \gamma' = \gamma_{k+1}$, contradicting
  Definition~\ref{defn:reduced}~(2).  Hence $A_1$, \ldots, $A_k$, and
  also $A_{k+1}$, are strictly included in $A'_1$.  We recall that,
  since $k \geq 1$, $\varphi (A'_1)$ is indecomposable, so
  $\varphi (P) = \varphi (A_1) \oplus \cdots \oplus \varphi (A_k)
  \oplus \varphi (A_{k+1}) \oplus \varphi (R') < \varphi (A'_1) \oplus
  \varphi (Q') = \varphi (P')$.
\end{proof}
\begin{corollary}
  \label{corl:wf}
  Let $X$ be a Noetherian space.  The inclusion ordering on
  transfinite products on $X$ is well-founded.  Additionally, the
  ordinal rank of $X^{<\alpha}$ in the poset of all $\alpha$-products
  is at most $\omega^{\adjsw{(||X|| \oplus \flt\alpha)}}$, for every
  bound $\alpha$.
\end{corollary}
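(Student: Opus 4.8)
The plan is to derive both assertions from Proposition~\ref{prop:phi}, transporting the function $\varphi$ from reduced products to arbitrary transfinite products via Corollary~\ref{corl:reduced}. For a transfinite product $P$, set $\tilde\varphi (P) \eqdef \varphi (\bP_0)$, where $\bP_0$ is the unique reduced sequence of atoms with $P_0 = P$. If $P \subseteq P'$ (resp.\ $P \subsetneq P'$) as sets, then the associated reduced products satisfy the same inclusion as sets, so Proposition~\ref{prop:phi} gives $\tilde\varphi (P) \leq \tilde\varphi (P')$ (resp.\ $\tilde\varphi (P) < \tilde\varphi (P')$). Hence $\tilde\varphi$ is a strictly monotone map from the poset of transfinite products on $X$, ordered by $\subseteq$, into the ordinals; an infinite strictly descending chain of transfinite products would yield an infinite strictly descending chain of ordinals, which is impossible. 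Therefore inclusion is well-founded on transfinite products, and a fortiori on $\alpha$-products for every bound $\alpha$ --- this is the first assertion, and also condition~(1) of Proposition~\ref{prop:S}.

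For the rank bound I would use the standard fact that a strictly monotone ordinal-valued map $\psi$ on a poset $Q$ dominates rank pointwise, $\mathrm{rank}_Q (x) \leq \psi (x)$, proved by well-founded induction: $\mathrm{rank}_Q (x) = \sup_{y < x} (\mathrm{rank}_Q (y) + 1) \leq \sup_{y < x} (\psi (y) + 1) \leq \psi (x)$, the last step because $\psi (y) < \psi (x)$ forces $\psi (y) + 1 \leq \psi (x)$. I apply this with $Q$ the poset of $\alpha$-products and $\psi = \tilde\varphi$, so it remains to locate $X^{<\alpha}$ inside $Q$ and to compute its $\tilde\varphi$-value. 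Assume first that $X \neq \emptyset$ and that $\alpha$ is a proper bound. Then $X^{<\alpha}$ is literally the $\alpha$-product whose single atom is $X^{<\alpha}$: a one-atom sequence is automatically reduced, $X$ itself is a non-empty closed subset of $X$, $\alpha \leq \alpha$, $\alpha \neq \omega^0 + 1$, and the decomposable-case side condition of Definition~\ref{defn:alpha:prod} is vacuous when $n = 1$. By Proposition~\ref{prop:alpha:prod} every $\alpha$-product is a closed subset of $X^{<\alpha}$, so $X^{<\alpha}$ is the greatest element of $Q$; and since $\alpha \neq \omega^0 + 1$, Definition~\ref{defn:phi} evaluates $\tilde\varphi (X^{<\alpha}) = \omega^{\adjsw{(||X|| \oplus \flt\alpha)}}$. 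The fact above then gives $\mathrm{rank}_Q (X^{<\alpha}) \leq \omega^{\adjsw{(||X|| \oplus \flt\alpha)}}$.

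The boundary bounds offer no resistance. If $X = \emptyset$, or if $\alpha$ is the trivial bound $\omega^0$, then $\epsilon$ is the only transfinite word over $X$ of length $< \alpha$, so $X^{<\alpha} = \varepsilon$ is the sole $\alpha$-product, of rank $0$, below the claimed bound (which is at least $\omega^0 = 1$). The only mildly delicate case is $\alpha = \omega^0 + 1$ with $X$ reducible: there $X^{<\alpha} = X^{\leq 1}$ is the finite union $\bigcup_i C_i^?$ of the $\alpha$-products $C_i^?$ attached to the irreducible components $C_i$ of $X$, and is not itself an $\alpha$-product, so one works in $Q$ with $X^{<\alpha}$ formally adjoined on top and bounds $\mathrm{rank} (X^{<\alpha}) = \sup_{P \in Q} (\mathrm{rank}_Q (P) + 1) \leq \sup_{P \in Q} (\tilde\varphi (P) + 1)$, after checking that every $\alpha$-product $P$ (here $\varepsilon$ and the $C_i^?$, with $\tilde\varphi (C_i^?) = ||C_i||$) has $\tilde\varphi (P)$ strictly below the relevant power of $\omega$. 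I do not expect a real obstacle anywhere: the intricate part --- the induction establishing strict monotonicity of $\varphi$ --- has already been done in Proposition~\ref{prop:phi}, and Corollary~\ref{corl:wf} is essentially the bookkeeping that repackages it as well-foundedness together with a bound on the rank of the top element.
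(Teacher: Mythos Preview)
Your proposal is correct and follows the same route as the paper, which in fact gives no explicit proof and treats the corollary as immediate from Proposition~\ref{prop:phi}. Your explicit unpacking---transporting $\varphi$ to arbitrary transfinite products via Corollary~\ref{corl:reduced}, invoking the standard domination of rank by any strictly monotone ordinal-valued map, and separately handling the degenerate bounds $\alpha=\omega^0$ and $\alpha=\omega^0+1$---is exactly the intended argument, spelled out more carefully than the paper bothers to (note in particular that $\flt\alpha$ is only defined for proper bounds, so the second assertion of the corollary is really only meaningful in the proper case you treat as the main one).
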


\section{Intersections of transfinite products}
\label{sec:inters-transf-prod}

\begin{lemma}
  \label{lemma:inter}
  Let $X$ be a topological space.  The intersection of two transfinite
  products satisfies the following properties:
  \begin{enumerate}
  \item $\varepsilon \cap P' = P \cap \varepsilon = \varepsilon$.
  \item If $\gamma < \gamma'$ or if $\gamma=\gamma'$ is
    indecomposable, then $F^{<\gamma} P \cap {F'}^{<\gamma'} P'$ is
    equal to the union of
    $(F \cap F')^{<\gamma}(P \cap {F'}^{<\gamma'} P')$ and of
    $(F \cap F')^{<\gamma}(F^{<\gamma} P \cap P')$.
  \item If $\gamma=\gamma'$ is decomposable, say
    $\gamma = \gamma' = \omega^\beta+1$, then
    $F^{<\gamma} P \cap {F'}^{<\gamma'} P'$ is the union of
    $(F \cap F')^{<\gamma} (P \cap P')$, of
    $(F \cap F')^{<\omega^\beta}(P \cap {F'}^{<\gamma'} P')$, and of
    $(F \cap F')^{<\omega^\beta}(F^{<\gamma} P \cap P')$.
  \end{enumerate}
\end{lemma}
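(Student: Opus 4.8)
The plan is to dispatch (1) directly and to prove each equality in (2) and (3) by two inclusions: the forward one by a case analysis on how the two factorisations of a common word sit relative to one another, and the backward one by a direct verification. Part (1) is immediate: since $0$ is below every non-trivial bound, each atom $F^{<\gamma}$ contains the empty word, hence so does every transfinite product, and since $\varepsilon = \{\epsilon\}$ this gives $\varepsilon \cap P' = \varepsilon = P \cap \varepsilon$.

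For the forward inclusion in (2) and (3), I would take $w \in F^{<\gamma} P \cap {F'}^{<\gamma'} P'$ and fix factorisations $w = uv$ with $u \in F^{<\gamma}$, $v \in P$, and $w = u'v'$ with $u' \in {F'}^{<\gamma'}$, $v' \in P'$. Since $u$ and $u'$ are both prefixes of $w$, one is a prefix of the other, and whichever is shorter has all of its letters in $F$ \emph{and} in $F'$, hence in $F \cap F'$. If $|u| \le |u'|$, write $u' = ut$, so $v = tv'$; then $t$ is a factor of $u'$, so $t \in {F'}^{<\gamma'}$ (its letters lie in $F'$ and $|t| \le |u'| < \gamma'$), whence $v = tv' \in P \cap {F'}^{<\gamma'} P'$, while $u \in (F \cap F')^{<\gamma}$, so $w$ lands in the term built from $P \cap {F'}^{<\gamma'} P'$. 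The case $|u'| < |u|$ is symmetric with the two products exchanged, landing $w$ in the term built from $F^{<\gamma} P \cap P'$. In case (3), where $\gamma = \gamma' = \omega^\beta+1$, one refines this split: if $|u| = |u'| = \omega^\beta$ then $u = u'$ and $v = v'$, so $w \in (F \cap F')^{<\gamma}(P \cap P')$; otherwise the shorter prefix has length strictly below $\omega^\beta$, which is exactly what places $w$ into one of the two $(F \cap F')^{<\omega^\beta}(\cdots)$ terms.

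For the backward inclusion, every displayed term has the shape $(F \cap F')^{<\delta}\,R$ with $\delta \in \{\gamma, \omega^\beta\}$ and $R$ one of $P \cap {F'}^{<\gamma'} P'$, $F^{<\gamma} P \cap P'$, or $P \cap P'$; given $w = ab$ with $a \in (F \cap F')^{<\delta}$ and $b \in R$, I would check $w \in F^{<\gamma} P$ and $w \in {F'}^{<\gamma'} P'$ separately. When $b$ already lies in $P$ (resp.\ $P'$), the factorisation $w = a \cdot b$ works directly, since $a$'s letters are in $F$ (resp.\ $F'$) and $|a| < \delta \le \gamma$ (resp.\ $\le \gamma'$). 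When $b$ is only known to lie in $F^{<\gamma} P$ (resp.\ ${F'}^{<\gamma'} P'$), write $b = b_1 b_2$ with $b_1$ the leading chunk of $b$ lying in $F^{<\gamma}$ (resp.\ in ${F'}^{<\gamma'}$), and merge $a$ into $b_1$, the crux being to see $|ab_1| < \gamma$ (resp.\ $< \gamma'$) via Lemma~\ref{lemma:bounds}.

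This merging step is the main obstacle, and it is what forces the $\omega^\beta$-exponents to appear in (3): in the decomposable cases ($\gamma = \omega^\beta+1$), concatenating two words of length $\le \omega^\beta$ may overshoot $\gamma$, so the leading factor must carry the exponent $\omega^\beta$ rather than $\gamma$ — then $|a| < \omega^\beta$ and Lemma~\ref{lemma:bounds}(2) yields $|ab_1| < \omega^\beta+1 = \gamma$ — which is why (3) needs three terms with two distinct leading exponents while (2) only covers $\gamma$ indecomposable or $\gamma < \gamma'$. Beyond this point the argument is routine prefix/suffix bookkeeping on transfinite words together with the indecomposability facts of Lemma~\ref{lemma:bounds}(1)--(2).
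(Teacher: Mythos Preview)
Your approach matches the paper's: forward inclusion by comparing $|u|$ and $|u'|$, backward by absorbing the leading $(F\cap F')^{<\delta}$ factor into the adjacent atom (the paper phrases this via Lemma~\ref{lemma:prod:equal}(1), you via Lemma~\ref{lemma:bounds}). The forward direction and all of case~(3) go through as you describe.

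There is, however, a genuine gap in the backward direction of~(2). You argue that the merging obstacle is confined to~(3) because ``(2) only covers $\gamma$ indecomposable or $\gamma<\gamma'$,'' but $\gamma<\gamma'$ does not preclude $\gamma$ being decomposable. For the second term $(F\cap F')^{<\gamma}(F^{<\gamma}P\cap P')$, landing in $F^{<\gamma}P$ requires $|ab_1|<\gamma$ from $|a|,|b_1|<\gamma$, and neither clause of Lemma~\ref{lemma:bounds} delivers this when $\gamma=\omega^\beta+1$. Indeed the inclusion is false: with $X=\{a\}$, $\gamma=2$, $\gamma'=\omega$, $F=F'=X$, $P=\varepsilon$, $P'=X^?$, the word $aa$ lies in $(F\cap F')^{<2}(F^{<2}P\cap P')=X^?X^?$ but not in $F^{<2}P=X^?$, so equality~(2) fails as stated. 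The paper's own proof has the same lacuna --- its ``similarly'' for the second term tacitly applies Lemma~\ref{lemma:prod:equal}(1) with both exponents equal to $\gamma$, which also needs $\gamma$ indecomposable --- so this is a defect of the statement rather than of your strategy; the sub-case $\gamma=\omega^\beta+1<\gamma'$ must be handled along the lines of~(3).
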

\begin{proof}
  (1) is clear.  Let us deal with the left to right inclusions for the
  other cases, as a first step.
  For every $w \in F^{<\gamma} P \cap {F'}^{<\gamma'} P'$, let us
  write $w$ as $uv$ where $u \in F^{<\gamma}$, $v \in P$ and also as
  $u'v'$ where $u' \in {F'}^{<\gamma'}$, $v' \in P'$.
    
  (2) If $|u| \leq |u'|$, $u$ is a prefix of $u'$, so its
  letters are not just in $F$, but also in $F'$. Therefore
  $u \in (F \cap F')^{<\gamma}$. Also, $v$ is in $P$, and is a suffix
  of $w$.  In particular, $v \leq_* w$, 
  so $v \in {F'}^{<\gamma'} P'$ by
  Lemma~\ref{lemma:prod}.  If $|u'| \leq |u|$, then symmetrically
  $u'$ is in $(F \cap F')^{<\gamma}$ and $v'$ is in $P'$ and in
  $F^{<\gamma} P$.

  (3) If $|u|=|u'| = \omega^\beta$, then $u=u'$ is in
  $(F \cap F')^{<\gamma}$, and $v=v'$ is in $P \cap P'$, so
  $w \in (F \cap F')^{<\gamma} (P \cap P')$.  Otherwise, $w$ is in
  $F^{<\omega^\beta} P$ and in ${F'}^{<\gamma'} P'$, or in
  $F^{<\gamma} P$ and in ${F'}^{<\omega^\beta} P'$.  In the first
  case, by (2) with $\omega^\beta$ in lieu of $\gamma$, $w$ is in
  $(F \cap F')^{<\omega^\beta}(P \cap {F'}^{\gamma'} P') \cup (F \cap
  F')^{<\omega^\beta}(F^{<\omega^\beta} P \cap P')$, hence in
  $(F \cap F')^{<\omega^\beta}(P \cap {F'}^{\gamma'} P') \cup (F \cap
  F')^{<\omega^\beta}(F^{<\gamma} P \cap P')$.  In the second case,
  a similar argument leads to the same result.
    
  We now deal with the right to left inclusions.

  (2) $(F \cap F')^{<\gamma}(P \cap {F'}^{<\gamma'} P')$ is included
  both in $F^{<\gamma} P$ (because $F \cap F' \subseteq F$) and in
  $(F \cap F')^{<\gamma} {F'}^{<\gamma'} P' = {F'}^{\gamma'} P'$, by
  Lemma~\ref{lemma:prod:equal}~(1).  Similarly for
  $(F \cap F')^{<\gamma}(F^{<\gamma} P \cap P')$.

  (3) $(F \cap F')^{<\gamma} (P \cap P')$ is included both in
  $F^{<\gamma} P$ and in ${F'}^{<\gamma} P' = {F'}^{<\gamma'} P'$.
  $(F \cap F')^{<\omega^\beta}(P \cap {F'}^{\gamma'} P')$ is included
  both in
  $(F \cap F')^{<\omega^\beta} P \subseteq F^{<\omega^\beta} P
  \subseteq F^{<\gamma} P$ and in
  $(F \cap F')^{<\omega^\beta} {F'}^{\gamma'} P' = {F'}^{\gamma'} P'$
  (by Lemma~\ref{lemma:prod:equal}~(1)).  Finally,
  $(F \cap F')^{<\omega^\beta}(F^{<\gamma} P \cap P')$ is included
  both in $(F \cap F')^{<\omega^\beta} F^{<\gamma} P = F^{<\gamma} P$
  (by Lemma~\ref{lemma:prod:equal}~(1)) and in
  $(F \cap F')^{<\omega^\beta} P' \subseteq {F'}^{<\omega^\beta} P'
  \subseteq {F'}^{<\gamma'} P'$.
\end{proof}

\begin{corollary}
  \label{corl:inter}
  Let $X$ be a Noetherian space.  The intersection of any two
  transfinite products is a finite union of transfinite products.
\end{corollary}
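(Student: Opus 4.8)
The plan is to argue by induction on the total number of atoms occurring in the two transfinite products, using the recursion of Lemma~\ref{lemma:inter} for the inductive step; Noetherianity of $X$ enters only to split a closed set into irreducible pieces when a superscript $\omega^0+1$ appears over it.

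Concretely, write the two products as $P = A_1 A_2 \cdots A_n$ and $Q = B_1 B_2 \cdots B_m$ with each $A_i$, $B_j$ an atom, and induct on $n + m$. If $n = 0$ or $m = 0$, then $P \cap Q = \varepsilon$ by Lemma~\ref{lemma:inter}~(1), and $\varepsilon$ is a transfinite product, so the base case is clear. Otherwise put $A_1 = F^{<\gamma}$, $B_1 = {F'}^{<\gamma'}$, so $P = F^{<\gamma} P_0$ and $Q = {F'}^{<\gamma'} Q_0$ with $P_0$, $Q_0$ transfinite products; since $\cap$ is commutative we may assume $\gamma \leq \gamma'$. If $\gamma < \gamma'$, or $\gamma = \gamma'$ is indecomposable, Lemma~\ref{lemma:inter}~(2) writes $P \cap Q$ as a union of two sets of the form $(F \cap F')^{<\gamma} (R \cap S)$; if $\gamma = \gamma'$ is decomposable, say $\gamma = \omega^\beta + 1$, Lemma~\ref{lemma:inter}~(3) writes it as a union of three sets of the form $(F \cap F')^{<\delta}(R \cap S)$ with $\delta \in \{\gamma, \omega^\beta\}$. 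In every occurrence, $R \cap S$ is an intersection of two transfinite products whose atom counts sum to strictly less than $n + m$ (namely $(n-1)+m$, $n+(m-1)$, or $(n-1)+(m-1)$), so by the induction hypothesis $R \cap S$ is a finite union of transfinite products.

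It remains to dispose of the prefactors. For any closed $G \subseteq X$ and any bound $\delta$, the set $G^{<\delta}$ is a finite union of atoms, or equals $\varepsilon$: it is $\varepsilon$ when $G = \emptyset$ or $\delta = \omega^0$; it is a single atom when $\delta$ is a proper bound and $G \neq \emptyset$; and when $\delta = \omega^0 + 1$ and $G \neq \emptyset$, Noetherianity of $X$ lets us write $G = C_1 \cup \cdots \cup C_k$ with each $C_j$ irreducible closed, whence $G^{<\omega^0+1} = \bigcup_{j=1}^k C_j^{<\omega^0+1}$ is a finite union of atoms. Applying this with $G = F \cap F'$ (which is closed) and $\delta \in \{\gamma, \omega^\beta\}$ takes care of every prefactor. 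Finally, concatenation distributes over finite unions, and the concatenation of an atom (or of $\varepsilon$) with a transfinite product is again a transfinite product; so each term $(F \cap F')^{<\delta}(R \cap S)$, and hence $P \cap Q$, is a finite union of transfinite products. I do not anticipate a real obstacle: Lemma~\ref{lemma:inter} has already done the combinatorial heavy lifting, and the only place the Noetherian hypothesis is genuinely needed is the irreducible decomposition of $F \cap F'$ legitimizing an $\omega^0+1$ superscript; the remaining care is purely bookkeeping (commutativity to get $\gamma \leq \gamma'$, and pushing concatenation through the unions supplied by the induction hypothesis).
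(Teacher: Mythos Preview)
Your proof is correct and follows essentially the same route as the paper's: induction on the sum of the sizes, invoking Lemma~\ref{lemma:inter} for the recursive decomposition, and using Noetherianity of $X$ only to split $F \cap F'$ into irreducible pieces when a prefactor carries the exponent $\omega^0+1$. Your treatment of the prefactors is slightly more systematic (you uniformly handle $G=\emptyset$, $\delta=\omega^0$, $\delta$ proper, and $\delta=\omega^0+1$), but the content is the same.
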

\begin{proof}
  By induction on the sum of their sizes, using
  Lemma~\ref{lemma:inter}.  The case where one of them is
  $\varepsilon$ is obvious, so we deal with the intersection of two
  transfinite products $F^{<\gamma} P$ and ${F'}^{<\gamma'} P'$.
  Without loss of generality, $\gamma \leq \gamma'$.
  
  By induction hypothesis, $P \cap {F'}^{\gamma'} P'$,
  $F^{<\gamma} P \cap P'$ and $P \cap P'$ are
  finite unions of transfinite products, say $\bigcup_i P_i$,
  $\bigcup_j Q_j$, and $\bigcup_k R_k$ respectively.

  The intersection $F^{<\gamma} P \cap {F'}^{<\gamma'} P'$ can then be
  expressed as
  $\bigcup_i (F \cap F')^{<\gamma} P_i \cup \bigcup_j (F \cap
  F')^{<\gamma} Q_j$ when $\gamma < \gamma'$ or if $\gamma = \gamma'$
  is indecomposable.  If $\gamma=\gamma'$ is of the form
  $\omega^\beta+1$, then it can be expressed as the union
  $\bigcup_k (F \cap F')^{<\gamma} R_k \cup \bigcup_i (F \cap
  F')^{<\omega^\beta} P_i \cup \bigcup_j (F \cap F')^{<\omega^\beta}
  Q_j$.

  This is a finite union of transfinite products, except when
  $\gamma = \omega^0+1$.  In that case, we need to refine the
  expressions above.  If $F \cap F'$ is empty, then
  $(F \cap F')^{<\gamma} 
  = \varepsilon$, so
  $F^{<\gamma} P \cap {F'}^{<\gamma'} P' = \bigcup_i P_i \cup
  \bigcup_j Q_j$ when $\omega^0+1 < \gamma'$, and
  $F^{<\gamma} P \cap {F'}^{<\gamma'} P' = \bigcup_k R_k \cup
  \bigcup_i P_i \cup \bigcup_j Q_j$ otherwise.

  We now assume that $F \cap F' \neq \emptyset$. $F$ and $F'$ are
  irreducible closed, and since $X$ is Noetherian, $F \cap F'$ is a
  finite union $\bigcup_{\ell=1}^n C_\ell$ of irreducible closed
  subsets.  Since $F \cap F' \neq \emptyset$, $n$ is non-zero.  Then
  $(F \cap F')^{<\gamma} = (F \cap F')^? = \bigcup_{\ell=1}^n
  C_\ell^?$ (an equality that would fail if $n$ were zero), and
  $(F \cap F')^{<\omega^0} = \varepsilon$.  Therefore
  $F^{<\gamma} P \cap {F'}^{<\gamma'} P'$ is equal to
  $\bigcup_{i, \ell} C_\ell^?  P_i \cup \bigcup_{j, \ell} C_\ell^?
  Q_j$ when $\omega^0+1 < \gamma'$, or to
  $\bigcup_{k, \ell} C_\ell^? R_k \cup \bigcup_i P_i \cup \bigcup_j
  Q_j$ if $\gamma' = \omega^0+1$.
\end{proof}

We can now use Proposition~\ref{prop:S}. If $X$ is Noetherian, the
class $\mathcal P$ of $\alpha$-products is well-founded under
inclusion by Corollary~\ref{corl:wf}, $X^{<\alpha}$ is a finite
union of elements of $\mathcal P$ since it is, in fact, an
$\alpha$-product.  The intersection of any two elements of $\mathcal P$
is a finite union of elements of $\mathcal P$ by
Corollary~\ref{corl:inter}, and $\mathcal P$ is irredundant by
Proposition~\ref{prop:irred}. Moreover, the regular subword topology
on $X^{<\alpha}$ is the coarsest that makes every element of
$\mathcal P$ a closed set, by Proposition~\ref{prop:alpha:prod}. Since
every set of transfinite words on $X$ is included in $X^{<\alpha}$ for
some bound $\alpha$, we obtain the following. \begin{thm}
  \label{thm:Xalpha}
  For every Noetherian space $X$, every space $Y$ of transfinite words
  on $X$ is Noetherian in the regular subword topology.  For every
  bound $\alpha$, the irreducible closed subsets of $X^{<\alpha}$ are
  the $\alpha$-products.
\end{thm}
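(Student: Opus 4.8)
The plan is to apply Proposition~\ref{prop:S} to the set $X^{<\alpha}$, with the collection $\mathcal P$ of all $\alpha$-products playing the role of the distinguished family, for each bound $\alpha$; and then to deduce the statement for an arbitrary set $Y$ of transfinite words by a routine subspace argument.

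First I would verify the three hypotheses of Proposition~\ref{prop:S}. Well-foundedness of $\mathcal P$ under inclusion is precisely Corollary~\ref{corl:wf}. That $X^{<\alpha}$ is a finite union of $\alpha$-products is almost immediate: when $\alpha$ is the trivial bound, $X^{<\alpha} = \varepsilon$ is itself an $\alpha$-product; when $\alpha$ is a proper bound, or $\alpha = \omega^0+1$ with $X$ irreducible, $X^{<\alpha}$ is the single-atom $\alpha$-product; and when $\alpha = \omega^0+1$, writing $X$ as the finite union of its irreducible components $C_1, \dots, C_k$ (using that $X$ is Noetherian) gives $X^{<\alpha} = X^? = \varepsilon \cup \bigcup_{i=1}^k C_i^?$, a finite union of $\alpha$-products. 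For the third hypothesis, let $P, Q \in \mathcal P$. By Corollary~\ref{corl:inter}, $P \cap Q$ is a finite union $\bigcup_i R_i$ of transfinite products; since $P, Q \subseteq X^{<\alpha}$, this intersection is unchanged upon intersecting with $X^{<\alpha}$, and Proposition~\ref{prop:alpha:prod} then rewrites each $R_i \cap X^{<\alpha}$ as a finite union of $\alpha$-products. This last point is the only step in the argument that needs a little care: Corollary~\ref{corl:inter} by itself yields only transfinite products, and one must re-apply Proposition~\ref{prop:alpha:prod} to land back inside $\mathcal P$.

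Proposition~\ref{prop:S} then yields that $X^{<\alpha}$, endowed with the coarsest topology making every $\alpha$-product closed, is Noetherian; and since $\mathcal P$ is irredundant by Proposition~\ref{prop:irred}, the irreducible closed subsets of $X^{<\alpha}$ for that topology are exactly the $\alpha$-products. It remains to identify that topology with the regular subword topology, and this is exactly Proposition~\ref{prop:alpha:prod}, which states that the $\alpha$-products are closed in the regular subword topology and that their complements form a subbase for it. Hence $X^{<\alpha}$ is Noetherian in the regular subword topology, with the stated irreducible closed subsets.

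Finally, for an arbitrary set $Y$ of transfinite words on $X$: the lengths of the words in $Y$ form a set of ordinals, hence are bounded by some ordinal $\mu$, so $Y \subseteq X^{<\mu} \subseteq X^{<\alpha}$ for the bound $\alpha \eqdef \omega^{\mu}$ (using $\mu \leq \omega^\mu$; take $\alpha \eqdef 1$ if $\mu = 0$). A subbasic closed set of the regular subword topology on $Y$ is $(F_1^{<\alpha_1} \cdots F_n^{<\alpha_n}) \cap Y$, which equals the intersection with $Y$ of the corresponding subbasic closed set of $X^{<\alpha}$; so the regular subword topology on $Y$ coincides with the subspace topology inherited from $X^{<\alpha}$. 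Since subspaces of Noetherian spaces are Noetherian --- if $D_1 \supsetneq D_2 \supsetneq \cdots$ were an infinite descending chain of closed subsets of $Y$, then, writing $\overline{D_i}$ for the closure of $D_i$ in $X^{<\alpha}$, one has $D_i = \overline{D_i} \cap Y$ and hence $\overline{D_1} \supsetneq \overline{D_2} \supsetneq \cdots$, contradicting Noetherianity of $X^{<\alpha}$ --- the space $Y$ is Noetherian. The genuinely hard inputs are Corollary~\ref{corl:wf} (well-foundedness, resting on the rank assignment $\varphi$ of Definition~\ref{defn:phi} and Proposition~\ref{prop:phi}) and Corollary~\ref{corl:inter} (intersections, resting on Lemma~\ref{lemma:inter} and the inclusion analysis of Section~\ref{sec:incl-transf-prod}); given those, the theorem itself is essentially bookkeeping.
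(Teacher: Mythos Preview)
Your proof is correct and follows essentially the same route as the paper: apply Proposition~\ref{prop:S} to the family of $\alpha$-products, citing Corollary~\ref{corl:wf}, Corollary~\ref{corl:inter}, Proposition~\ref{prop:irred}, and Proposition~\ref{prop:alpha:prod}, then pass to arbitrary $Y$ by a subspace argument. You are in fact slightly more careful than the paper in two places: the paper asserts that $X^{<\alpha}$ is itself an $\alpha$-product, which fails in the edge case $\alpha=\omega^0+1$ with $X$ non-irreducible (your decomposition into $C_i^?$ fixes this), and you explicitly note that Corollary~\ref{corl:inter} yields transfinite products that must be post-processed via Proposition~\ref{prop:alpha:prod} to land back among the $\alpha$-products.
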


Given any non-empty Noetherian space $Z$, $Z$ has finitely many
maximal irreducible closed subsets $C_1, \cdots, C_n \in \Sober Z$
($n \geq 1$).  The \emph{reduced sobrification rank} $\rsob Z$ is the
maximum of the ranks of $C_1$, \ldots, $C_n$ in $\Sober Z$
\cite[Lemma~4.2]{GLL:stature}; then
$||Z|| \leq \omega^{\rsob Z} \otimes n$
\cite[Proposition~4.5]{GLL:stature}.
For $Z \eqdef X^{<\alpha}$, $n=1$ since $X^{<\alpha}$ is the unique
largest element of $\Sober {(X^{<\alpha})}$, so the $\strut\otimes n$
part vanishes. Corollary~\ref{corl:wf} then gives us the following
quantitative information.
\begin{proposition}
  \label{prop:stature}
  For every Noetherian space $X$, for every bound $\alpha$, we have
  $\rsob {X^{<\alpha}} \leq \omega^{\adjsw{(||X|| \oplus
      \flt\alpha)}}$ and
  $||X^{<\alpha}|| \leq \omega^{\omega^{\adjsw{(||X|| \oplus
        \flt\alpha)}}}$.
\end{proposition}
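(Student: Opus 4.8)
The plan is to read both estimates directly off the description of $\Sober (X^{<\alpha})$ already obtained in Theorem~\ref{thm:Xalpha}, combining it with the rank bound of Corollary~\ref{corl:wf} and the general inequality $||Z|| \leq \omega^{\rsob Z} \otimes n$ from \cite[Proposition~4.5]{GLL:stature}.

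First I would record the precise shape of $\Sober (X^{<\alpha})$. Since the family $\mathcal P$ of $\alpha$-products is irredundant (Proposition~\ref{prop:irred}), Proposition~\ref{prop:S} identifies $\Sober (X^{<\alpha})$ with $\mathcal P$ under the upper topology of inclusion, so its specialization ordering is inclusion of $\alpha$-products. Now $X^{<\alpha}$ is itself an $\alpha$-product, and by Proposition~\ref{prop:alpha:prod} every $\alpha$-product is a subset of $X^{<\alpha}$; hence $X^{<\alpha}$ is the greatest element of $\Sober (X^{<\alpha})$, and in particular its unique maximal irreducible closed subset. Therefore the number $n$ occurring in \cite[Lemma~4.2, Proposition~4.5]{GLL:stature} is $1$ (so the $\otimes n$ factor is trivial, giving $||X^{<\alpha}|| \leq \omega^{\rsob {X^{<\alpha}}}$), and $\rsob {X^{<\alpha}}$ is by definition exactly the ordinal rank of the point $X^{<\alpha}$ in $\Sober (X^{<\alpha})$, that is, in the poset of all $\alpha$-products ordered by inclusion.

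It then remains to bound that rank, which is precisely what Corollary~\ref{corl:wf} does: it is at most $\omega^{\adjsw{(||X|| \oplus \flt\alpha)}}$. This gives the first inequality $\rsob {X^{<\alpha}} \leq \omega^{\adjsw{(||X|| \oplus \flt\alpha)}}$, and substituting into $||X^{<\alpha}|| \leq \omega^{\rsob {X^{<\alpha}}}$ yields the second, $||X^{<\alpha}|| \leq \omega^{\omega^{\adjsw{(||X|| \oplus \flt\alpha)}}}$.

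I expect no real obstacle: with Theorem~\ref{thm:Xalpha} and Corollary~\ref{corl:wf} in hand the whole argument is bookkeeping. The only points that want a moment's care are that $X^{<\alpha}$ is non-empty (it always contains $\epsilon$, every bound being at least $1$) so that $\rsob$ makes sense; that the rank entering the definition of $\rsob$, computed inside the sober space $\Sober (X^{<\alpha})$, coincides verbatim with the poset rank estimated by Corollary~\ref{corl:wf}, which is immediate since those two posets are the same; and, should one want the statement verbatim for the non-proper bounds $\alpha = \omega^0$ and $\alpha = \omega^0+1$, a direct inspection of those two small spaces, consistent with the way $\flt\alpha$ is understood in Corollary~\ref{corl:wf}.
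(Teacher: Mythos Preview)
Your proposal is correct and follows essentially the same route as the paper: identify $\Sober(X^{<\alpha})$ with the poset of $\alpha$-products, observe that $X^{<\alpha}$ is its unique maximal element so that $n=1$ and the $\otimes n$ factor vanishes in \cite[Proposition~4.5]{GLL:stature}, and then invoke Corollary~\ref{corl:wf} for the rank bound. The paper compresses this into two sentences before the proposition; your version spells out the same ingredients (Propositions~\ref{prop:S}, \ref{prop:irred}, \ref{prop:alpha:prod}) a bit more explicitly and flags the non-proper bounds as an edge case, but there is no substantive difference.
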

It is not our purpose to give an exact formula for
$\rsob {X^{<\alpha}}$ and $||X^{<\alpha}||$ here. When
$\alpha = \omega^1$, $X^{<\alpha}=X^*$, $\flt\alpha = 0$, and
Proposition~\ref{prop:stature} implies
$\rsob {X^*} \leq \omega^{\adjsw{||X||}}$ and
$||X^*|| \leq \omega^{\omega^{\adjsw{||X||}}}$. The former is
essentially optimal, since if $X \neq \emptyset$, then
$\rsob {X^*} = \omega^{\adjsw{||X||}}$
\cite[Theorem~12.13]{GLL:stature}, while the latter is close to
optimal: if $||X||$ is infinite,
$||X^*|| = \omega^{\omega^{\adjsw{||X||}}}$
\cite[Theorem~12.23]{GLL:stature}, generalizing Schmidt's formula on
well-partial-orders \cite[Theorem~9]{Schmidt:maxot}.  But when
$||X||$ is finite and non-empty,
$||X^*||=\omega^{\omega^{||X||-1}} < \omega^{\omega^{\adjsw{||X||}}}$.

\section{The specialization ordering}
\label{sec:spec-order-xalpha}

A non-empty transfinite word $w$ is indecomposable if and only if for
every way of writing $w$ as $uv$ where $v \neq \epsilon$, we have
$w \leq_* v$.  Pouzet \cite{Pouzet:pmo}, confirming a conjecture of
Jullien \cite{Jullien:PhD}, shows that $\leq$ is a
better-quasi-ordering if and only if every transfinite word (of
countable length) is a concatenation of finitely many indecomposable
words.

Given a topological space $X$ and an ordinal $\alpha$, let us call $w \in X^{<\alpha}$ \emph{topologically
  indecomposable} if and only $w \neq \epsilon$ and, for every way of
writing $w$ as $uv$ where $v \neq \epsilon$, $w$ is in the closure
$\overline v$ of $v$ in $X^{<\alpha}$. (We write $\overline v$ instead
of the more cumbersome notation $cl (\{v\})$.) When $w = uv$, we have
$v \leq_* w$, so $v \in \overline w$ by Lemma~\ref{lemma:prod}.
Hence $w$ is topologically indecomposable if and only if for every way
of writing $w$ as $uv$ where $v \neq \epsilon$,
$\overline v = \overline w$.

\begin{lemma}
  \label{lemma:indecomp}
  Let $X$ be a topological space.  Every indecomposable transfinite
  word $w$ on $X$ is topologically indecomposable.
\end{lemma}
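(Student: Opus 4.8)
The plan is to unwind the two definitions and reduce the claim to a combinatorial fact about subwords plus Lemma~\ref{lemma:prod}. Suppose $w$ is an indecomposable transfinite word, in the order-theoretic sense: for every factorization $w = uv$ with $v \neq \epsilon$, we have $w \leq_* v$. We must show $w$ is topologically indecomposable, i.e.\ for every such factorization, $w \in \overline v$. Since the closed sets of the regular subword topology are exactly the downwards-closed sets for $\leq_*$ among the relevant families (Lemma~\ref{lemma:prod} tells us every closed set is $\leq_*$-downwards-closed), the closure $\overline v$ contains every word below $v$; in particular, from $w \leq_* v$ we would like to conclude $w \in \overline v$ directly.

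First I would make precise why $w \leq_* v$ suffices. The closure $\overline v$ is the smallest closed set containing $v$. By Lemma~\ref{lemma:prod}, $\overline v$ is downwards-closed under $\leq_*$, so $w \leq_* v$ and $v \in \overline v$ give $w \in \overline v$ immediately. Conversely (for the ``if and only if'' reformulation already noted in the text), whenever $w = uv$ we have $v \leq_* w$, hence $v \in \overline w$, so $\overline v \subseteq \overline w$; combined with $w \in \overline v$ this yields $\overline v = \overline w$. Thus the entire statement collapses to: \emph{order-indecomposability of $w$ implies $w \leq_* v$ for every factorization $w = uv$ with $v \neq \epsilon$} — which is exactly the hypothesis. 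So the lemma is essentially a direct consequence of Lemma~\ref{lemma:prod} together with the definition of order-indecomposability.

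The only genuine point requiring care — and the step I expect to be the main obstacle, such as it is — is the boundary case and the role of the ambient set. Topological indecomposability is defined relative to a fixed $X^{<\alpha}$ (closures are taken there), so I should check that whenever $w \in X^{<\alpha}$ and $w = uv$, the suffix $v$ also lies in $X^{<\alpha}$, so that ``$\overline v$ in $X^{<\alpha}$'' makes sense; this is clear since $|v| \leq |w| < \alpha$. I would also note the degenerate factorization $w = \epsilon w$ (with $u = \epsilon$) is harmless: there $v = w$ and trivially $w \in \overline w$. With these trivialities dispatched, the proof is two or three lines: pick a factorization $w = uv$ with $v \neq \epsilon$; by indecomposability $w \leq_* v$; by Lemma~\ref{lemma:prod} the closed set $\overline v$ is $\leq_*$-downwards-closed, hence contains $w$; done.
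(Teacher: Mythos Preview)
Your proposal is correct and takes essentially the same approach as the paper: from $w \leq_* v$ (the definition of indecomposability) and the fact that closed sets are $\leq_*$-downwards-closed (Lemma~\ref{lemma:prod}), conclude $w \in \overline v$. The paper's proof is a single line to this effect; your additional remarks about the ambient space and the trivial factorization are fine but not needed.
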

\begin{proof}
  Whenever $w=uv$ with $v \neq \epsilon$,
  $w \leq_* v$; so $w \in \overline v$ by Lemma~\ref{lemma:prod}.
\end{proof}

\begin{lemma}
  \label{lemma:indecomp:decomp}
  Every transfinite word $w \in X^{<\alpha}$, where $X$ is Noetherian
  and $\alpha$ is a bound, can be written as a finite concatenation of
  topologically indecomposable transfinite words.
\end{lemma}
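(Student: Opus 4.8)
The plan is to proceed by well-founded induction on the stature $||w||$... no — rather, by well-founded induction using the stature of the closure $\overline w$ in the lattice of closed subsets of $X^{<\alpha}$ (which is Noetherian by Theorem~\ref{thm:Xalpha}, so such induction is legitimate). Given $w \in X^{<\alpha}$, if $w = \epsilon$ the empty concatenation works, so assume $w \neq \epsilon$. If $w$ is itself topologically indecomposable, we are done with a one-term concatenation. Otherwise, by definition there is a factorization $w = uv$ with $v \neq \epsilon$ and $\overline v \subsetneq \overline w$; note $u \neq \epsilon$ as well (if $u = \epsilon$ then $v = w$ and $\overline v = \overline w$, contradiction), so $v$ is a proper suffix and $|u| \geq 1$. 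The idea is then to split $w$ as $u \cdot v$, recurse on the "head" $u$ and on the "tail" $v$ separately, and concatenate the resulting lists of topologically indecomposable words.

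For this to be a legitimate induction, I need that both $\overline u \subsetneq \overline w$ and $\overline v \subsetneq \overline w$ — the latter we have by choice of the factorization; for the former, observe $u \leq_* w$ (delete the positions of $v$), so $\overline u \subseteq \overline w$ by Lemma~\ref{lemma:prod}, and if $\overline u = \overline w$ then $w \in \overline u$, whence... here I want a contradiction. Actually the cleanest route avoids claiming $\overline u \subsetneq \overline w$ in general: instead I would pick the factorization $w = uv$ more carefully. Among all proper suffixes $v$ of $w$ with $v \neq \epsilon$, consider the \emph{shortest} one whose closure $\overline v$ is strictly smaller than $\overline w$; call it $v_0$, with $w = u_0 v_0$. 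I claim $u_0$ is topologically indecomposable: any factorization $u_0 = u'v'$ with $v' \neq \epsilon$ gives a factorization $w = u'(v'v_0)$ with suffix $v'v_0 \neq \epsilon$; by minimality of $|v_0|$ (and $|v'v_0| = |v'| + |v_0| > |v_0|$ is wrong — it's longer, so minimality doesn't directly apply)...

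Let me instead take $v_0$ to be the \emph{longest} proper nonempty suffix with $\overline{v_0} = \overline w$ (this set is nonempty: it contains $\epsilon$? no, $v_0$ must be nonempty — but it does contain any nonempty suffix $v$ with $v \leq_* $ realized giving equality; in the worst case it might be empty, in which case \emph{every} nonempty proper suffix already has strictly smaller closure and we argue differently). The robust approach: do induction on $|w|$ \emph{jointly} with a sub-induction, or more simply, do induction on the pair $(||\overline w||, |w|)$ ordered lexicographically — but $|w|$ can be any ordinal below $\alpha$, which is fine as ordinals are well-ordered. Split $w = uv$ at the \emph{first} position $\beta \geq 1$ such that the suffix $v = w|_{[\beta, |w|)}$ satisfies $\overline v \subsetneq \overline w$; if no such $\beta$ exists, then every proper nonempty suffix has closure equal to $\overline w$, so in particular $w$ is topologically indecomposable and we stop. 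If such $\beta$ exists, then $u = w|_{[0,\beta)}$ has length $\beta \geq 1$, and every proper nonempty suffix of $u$ is a suffix $w|_{[\gamma,|w|)}$ \emph{truncated} — hmm, suffixes of $u$ are not suffixes of $w$. This is the subtlety.

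\textbf{The main obstacle} is precisely getting the recursion to be well-founded while controlling how closures behave under taking \emph{prefixes} (as opposed to suffixes, which Lemma~\ref{lemma:prod} handles cleanly). The fix I would commit to: use induction on $|w|$ alone. If $w$ is topologically indecomposable, done. Otherwise fix any factorization $w = uv$ with $v \neq \epsilon$ and $\overline v \neq \overline w$; then $u \neq \epsilon$ (else $\overline v = \overline w$), so $|u| < |w|$ since $|w| = |u| + |v|$ with $|v| \geq 1$ and ordinal addition is strictly increasing in the right argument when that argument is positive — wait, we need $|u| < |w|$, i.e. $|u| < |u| + |v|$, which holds as $|v| \geq 1$. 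And $|v| \leq |w|$ but possibly $|v| = |w|$ (e.g. $w = xw'$ with $|w'| = |w|$ when $|w|$ is a limit — indeed $1 + \omega = \omega$). So induction on $|v|$ fails. Therefore I use induction on $|w|$ for $u$, getting $u = u_1 \cdots u_k$ with each $u_i$ topologically indecomposable; but I still must handle $v$. For $v$: if $|v| < |w|$, induction applies to $v$ too and we concatenate. If $|v| = |w|$, I iterate: keep stripping leading nonempty prefixes $u$ as long as the remaining suffix has the same length as $w$; this process must terminate because... it peels off letters and $|w|$, being an ordinal, cannot be written as $\gamma + |w|$ with $\gamma$ arbitrarily large — actually $1 + \omega = \omega$ means we can strip infinitely many single letters. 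So instead, when $|v| = |w|$, note $\overline v \subsetneq \overline w$ gives $||\overline v|| < ||\overline w||$, and now do the \emph{outer} induction on $||\overline w||$: apply the induction hypothesis to $v$ (smaller stature), and apply the previous paragraph's argument to $u$ using induction on $|w|$ \emph{within} the current stature level. Concretely, the induction is on $(||\overline w||, |w|)$ lexicographically: for $u$, $|u| < |w|$ keeps the first coordinate $\leq$ and drops the second; for $v$, $\overline v \subsetneq \overline w$ drops the first coordinate. Concatenating the two decompositions gives the desired finite concatenation of topologically indecomposable words, completing the induction.
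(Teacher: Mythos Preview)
Your final approach---lexicographic induction on $(||\overline w||, |w|)$, splitting at any factorization $w = uv$ witnessing that $w$ is not topologically indecomposable, and recursing on $u$ (second coordinate drops, first weakly drops since $\overline u \subseteq \overline w$) and on $v$ (first coordinate strictly drops)---is correct. The paper takes a slightly different and tidier route: it inducts on $|w|$ alone. The trick is that, rather than picking an arbitrary bad factorization, the paper looks at the chain of closures $\overline{v_\beta}$ of \emph{all} nonempty suffixes $v_\beta$ of $w$, uses Noetherianity of $X^{<\alpha}$ to pick one whose closure is minimal, observes that this $v_\beta$ is then automatically topologically indecomposable (any nonempty suffix of $v_\beta$ is a suffix of $w$ with closure $\subseteq \overline{v_\beta}$, hence equal by minimality), and recurses only on the prefix $u_\beta$, which has length $\beta < |w|$. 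So the paper buys a single-variable induction and avoids recursing on both halves, at the cost of a small argument identifying the indecomposable tail; your approach trades that for a heavier well-founded order but needs no cleverness in choosing the split. Both use Noetherianity of $X^{<\alpha}$ in an essential way---you via the rank $||\overline w||$, the paper via well-foundedness of the chain of suffix closures.
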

\begin{proof}
  By induction on $|w|$. The claim is clear if
  $w = \epsilon$.  Otherwise, for every $\beta < |w|$, we
  write $w$ as $u_\beta v_\beta$ where $u_\beta$ is its
  prefix of length $\beta$, and $v_\beta \neq \epsilon$. For all
  ordinals $\beta < \gamma < |w|$, $v_\gamma \leq_* v_\beta$, so
  $v_\gamma \in \overline {v_\beta}$ by Lemma~\ref{lemma:prod},
  and therefore $\overline {v_\gamma} \subseteq \overline {v_\beta}$.
  Hence the closed sets $\overline {v_\beta}$ with $\beta < |w|$ form
  a chain.  Since $X^{<\alpha}$ is Noetherian
  (Theorem~\ref{thm:Xalpha}), inclusion is well-founded on its closed
  subsets, so there is a non-empty suffix
  $v_\beta$ of $w$ whose closure $\overline {v_\beta}$ is smallest.

  We claim that $v_\beta$ is topologically indecomposable.  Let us
  write $v_\beta$ as $uv$ with $v \neq \epsilon$.  Then
  $v = v_\gamma$, where $\gamma \eqdef \beta + |u|$, and
  $u_\gamma = u_\beta u$.  Since $\overline {v_\beta}$ is least and
  $\overline {v_\gamma} \subseteq \overline {v_\beta}$, owing to the
  fact that $\gamma \geq \beta$, we deduce that
  $\overline v = \overline {v_\gamma} = \overline {v_\beta}$.
  
  By induction hypothesis (indeed, $|u_\beta|=\beta < |w|$), $u_\beta$
  is a finite concatenation
  of topologically indecomposable words; hence so is
  $w = u_\beta v_\beta$.
\end{proof}

\begin{lemma}
  \label{lemma:indecomp:length}
  The length of a topologically indecomposable transfinite word is
  indecomposable.
\end{lemma}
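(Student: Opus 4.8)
The plan is to argue by contradiction, exploiting the observation that in the regular subword topology the exponents of preatoms may be \emph{arbitrary} ordinals, not merely bounds; hence, for every ordinal $\delta$, the set of transfinite words of length at most $\delta$ is closed. This is all that is needed to separate a word from a strictly shorter suffix of it.

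Let $w \in X^{<\alpha}$ be topologically indecomposable, so that $w \neq \epsilon$ and $|w| \geq 1$. Suppose, for contradiction, that $|w|$ is decomposable, i.e.\ $|w| = \alpha_1 + \alpha_2$ for some ordinals $\alpha_1, \alpha_2 < |w|$; note that $\alpha_2 \neq 0$, since otherwise $|w| = \alpha_1 < |w|$. I would then write $w$ as a concatenation $uv$ with $|u| = \alpha_1$ and $|v| = \alpha_2$. Since $\alpha_2 \geq 1$ we have $v \neq \epsilon$, and since $|v| = \alpha_2 < |w| < \alpha$ the word $v$ lies in $X^{<\alpha}$. By topological indecomposability of $w$, we get $w \in \overline v$.

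The key step is to exhibit a closed subset of $X^{<\alpha}$ that contains $v$ but not $w$. Take $C \eqdef X^{<|v|+1} \cap X^{<\alpha}$: this is one of the generating closed sets of Definition~\ref{defn:subword:topo}, obtained with $n \eqdef 1$, $F_1 \eqdef X$ (which is closed in $X$), and first exponent $|v|+1$ (an arbitrary ordinal, not necessarily a bound). Clearly $v \in C$, because $|v| < |v|+1$, so $\overline v \subseteq C$. On the other hand $|w| > |v|$, i.e.\ $|w| \geq |v|+1$, so $w \notin X^{<|v|+1}$ and therefore $w \notin C$. This contradicts $w \in \overline v \subseteq C$, and we conclude that $|w|$ is indecomposable.

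I do not expect a genuine obstacle; the single point that needs care is exactly the closedness of $X^{<|v|+1}$, which rests on the fact that the regular subword topology (Definition~\ref{defn:subword:topo}) is generated using arbitrary ordinal exponents, before being re-expressed through transfinite products in Proposition~\ref{prop:subword:topo}. It is worth remarking that, unlike most earlier results, this lemma requires neither Noetherianity of $X$ nor that $\alpha$ be a bound.
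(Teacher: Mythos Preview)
Your proof is correct and follows essentially the same approach as the paper's: split $w=uv$ with $|v|<|w|$ and $v\neq\epsilon$, use topological indecomposability to get $w\in\overline v$, and then observe that $\overline v\subseteq X^{<|v|+1}$ (a closed set by Definition~\ref{defn:subword:topo}) while $|w|>|v|$, a contradiction. Your closing remark that neither Noetherianity of $X$ nor $\alpha$ being a bound is needed here is also accurate.
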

\begin{proof}
  Let $w$ be topologically indecomposable in $X^{<\alpha}$,
  and let us assume that $|w|$ is decomposable.  We write $|w|$ as
  $\beta+\gamma$, where $0 < \beta, \gamma < |w|$, and then $w$ as
  $uv$ where $|u|=\beta$. We have $|v| = \gamma \neq 0$, so
  $v \neq \epsilon$. Since $w$ is topologically indecomposable, $w$ is
  in $\overline v$. Now $v$ is in $X^{<\gamma+1}$, which is closed, so
  $\overline v \subseteq X^{<\gamma+1}$. Thus 
  $w$ is in $X^{<\gamma+1}$, which is impossible since $\gamma < |w|$.
\end{proof}

\begin{lemma}
  \label{lemma:indecomp:split}
  Let $X$ be a Noetherian space and $\alpha$ be a bound.  For every
  non-empty transfinite word $w \in X^{<\alpha}$, $w$ is topologically
  indecomposable if and only if for every closed subset of
  $X^{<\alpha}$ that is the concatenation $\mathcal C_1 \mathcal C_2$
  of two closed subsets of $X^{\alpha}$, if
  $w \in \mathcal C_1 \mathcal C_2$ then $w$ is in $\mathcal C_1$ or
  in $\mathcal C_2$.
\end{lemma}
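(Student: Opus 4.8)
The plan is to prove the two implications separately; the forward one is immediate from the definition, and the converse is where a little care with ordinal arithmetic is needed.

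For the ``only if'' direction I would assume $w$ topologically indecomposable and take a closed subset $\mathcal C_1\mathcal C_2\subseteq X^{<\alpha}$, where $\mathcal C_1,\mathcal C_2$ are closed in $X^{<\alpha}$, with $w\in\mathcal C_1\mathcal C_2$. Then $w=uv$ for some $u\in\mathcal C_1$ and $v\in\mathcal C_2$. If $v=\epsilon$ then $w=u\in\mathcal C_1$. If $v\neq\epsilon$, topological indecomposability gives $w\in\overline v$, and since $\mathcal C_2$ is closed and contains $v$ we have $\overline v\subseteq\mathcal C_2$, so $w\in\mathcal C_2$. Note that this direction uses neither the Noetherian hypothesis nor the fact that $\mathcal C_1\mathcal C_2$ is itself closed.

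For the ``if'' direction I would assume $w$ has the stated splitting property and show it is topologically indecomposable. Since $w\neq\epsilon$ by hypothesis, fix any factorisation $w=uv$ with $v\neq\epsilon$; the goal is $w\in\overline v$. The idea is to apply the splitting property to $\mathcal C_1\eqdef X^{<|u|+1}$ and $\mathcal C_2\eqdef\overline v$. Both are closed in $X^{<\alpha}$: the first by Definition~\ref{defn:subword:topo} (using $|u|+1\leq|w|<\alpha$), the second trivially. Their concatenation contains $w=uv$, and I claim it is again a closed subset of $X^{<\alpha}$. It is contained in $X^{<\alpha}$: any of its words has the form $x_1x_2$ with $|x_1|\leq|u|$, and with $|x_2|\leq|v|$ since $\overline v\subseteq X^{<|v|+1}$ (the latter being closed and containing $v$), hence $|x_1x_2|=|x_1|+|x_2|\leq|u|+|v|=|w|<\alpha$; one has to route the estimate through $|u|+|v|=|w|$ precisely because ordinal addition is only weakly monotone in its left argument. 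It is closed because, by Theorem~\ref{thm:Xalpha}, $\overline v$ is a finite union of $\alpha$-products, while by Lemma~\ref{lemma:F:split} $X^{<|u|+1}$ is a concatenation of preatoms, so $\mathcal C_1\mathcal C_2$ unfolds into a finite union of sets of the form $F_1^{<\beta_1}\cdots F_N^{<\beta_N}$, each closed by Definition~\ref{defn:subword:topo}. The splitting property now yields $w\in X^{<|u|+1}$ or $w\in\overline v$; the former is impossible, since $|w|=|u|+|v|>|u|$, so $w\in\overline v$, as required.

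The only genuinely delicate point is the verification, in the ``if'' direction, that the concatenation $X^{<|u|+1}\,\overline v$ is a closed subset of $X^{<\alpha}$: one must simultaneously keep its word-lengths below $\alpha$ — whence the particular shape of the ordinal inequality — and recognise it as closed, which is exactly where Theorem~\ref{thm:Xalpha} and Lemma~\ref{lemma:F:split} are invoked. Everything else is a routine unfolding of the definition of topological indecomposability.
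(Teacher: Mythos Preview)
Your proof is correct and follows essentially the same approach as the paper's. The forward direction is identical. For the converse, the paper takes $\mathcal C_1\eqdef\overline u$ and $\mathcal C_2\eqdef\overline v$ rather than your $\mathcal C_1\eqdef X^{<|u|+1}$, but then immediately bounds $\overline u\subseteq X^{<|u|+1}$ to rule out $w\in\mathcal C_1$, so the two arguments converge on the same length contradiction; your version has the minor advantage of verifying explicitly that $\mathcal C_1\mathcal C_2$ is a closed subset of $X^{<\alpha}$, a point the paper glosses over.
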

\begin{proof}
  Let $w$ be topologically indecomposable. Since
  $w \in \mathcal C_1 \mathcal C_2$, we can write $w$ as $uv$ where
  $u \in \mathcal C_1$ and $v \in \mathcal C_2$.  If $v = \epsilon$,
  then $w=u$ is in $\mathcal C_1$.  Otherwise, since $w$ is
  topologically indecomposable, $\overline v = \overline w$.  But
  $\overline v \subseteq \mathcal C_2$, so
  $w \in \overline w \subseteq \mathcal C_2$.

  Conversely, let us assume that
  $w \in \mathcal C_1 \mathcal C_2$ implies $w \in \mathcal C_1$ or
  $w \in \mathcal C_2$ for every concatenation of two closed sets
  $\mathcal C_1 \mathcal C_2$.  For any way of writing $w$ as $uv$
  with $v \neq \emptyset$, we let $\mathcal C_1 \eqdef \overline u$
  and $\mathcal C_2 \eqdef \overline v$.  Then $w \in \overline u$ or
  $w \in \overline v$.  Let $\beta \eqdef |u|$.  We note that
  $\beta < |w|$, since $|w| = \beta + |v|$ and $v \neq \epsilon$.
  Clearly, $u$ is in the closed set $X^{<\beta+1}$, so
  $\overline u \subseteq X^{<\beta+1}$.  If $w \in \overline u$, then
  $w$ is in $X^{<\beta+1}$, so $|w| \leq \beta < |w|$, which is
  impossible.  Therefore $w$ is in $\overline v$.
\end{proof}


Let us write $\Img w$ for the set of letters in a transfinite word
$w$.  We call \emph{support} $\supp w$ of $w$ the closure
$cl (\Img w)$ of $\Img w$.

\begin{lemma}
  \label{lemma:indecomp:cl}
  Let $X$ be Noetherian, and $\alpha$ be a bound.
  For every topologically indecomposable word $w \in
  X^{<\alpha}$, $\overline w = {(\supp w)}^{<\gamma+1}$, where
  $\gamma \eqdef |w|$.
\end{lemma}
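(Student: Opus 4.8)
The plan is to prove the two inclusions $\overline w \subseteq {(\supp w)}^{<\gamma+1}$ and ${(\supp w)}^{<\gamma+1} \subseteq \overline w$ separately; the first is essentially formal, and all the content sits in the second. For the first inclusion, I would begin by recording some basic facts: since $w$ is topologically indecomposable, $w \neq \epsilon$, so $\Img w$ is non-empty and $\supp w = cl(\Img w)$ is a non-empty closed subset of $X$; moreover $\gamma = |w| < \alpha$, hence $\gamma + 1 \leq \alpha$, so ${(\supp w)}^{<\gamma+1}$ consists of words of length $\leq \gamma < \alpha$ and therefore equals its own intersection with $X^{<\alpha}$. By Definition~\ref{defn:subword:topo}, applied with a single preatom, ${(\supp w)}^{<\gamma+1}$ is thus closed in the regular subword topology, and it contains $w$ because $|w| = \gamma < \gamma+1$ and every letter of $w$ lies in $\Img w \subseteq \supp w$. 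Hence $\overline w \subseteq {(\supp w)}^{<\gamma+1}$.

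For the reverse inclusion, the idea is to use topological indecomposability to collapse $\overline w$ onto a single atom. As the closure of a point, $\overline w$ is irreducible closed, so by Theorem~\ref{thm:Xalpha} it is an $\alpha$-product $F_1^{<\gamma_1} \cdots F_n^{<\gamma_n}$, with $n \geq 1$ since $w \in \overline w$ and $w \neq \epsilon$. I would then prove, by induction on $n$ and using Lemma~\ref{lemma:indecomp:split}, that $w$ already lies in one of the atoms $F_i^{<\gamma_i}$: for $n \geq 2$, write $\overline w = F_1^{<\gamma_1} \cdot (F_2^{<\gamma_2} \cdots F_n^{<\gamma_n})$, observe that both factors are again $\alpha$-products and hence closed subsets of $X^{<\alpha}$ (Proposition~\ref{prop:alpha:prod}), apply Lemma~\ref{lemma:indecomp:split} to $w \in \overline w$ to obtain $w \in F_1^{<\gamma_1}$ or $w \in F_2^{<\gamma_2} \cdots F_n^{<\gamma_n}$, and finish the second case with the induction hypothesis.

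Once $w \in F_i^{<\gamma_i}$ is secured for some $i$, we have $|w| = \gamma < \gamma_i$, i.e.\ $\gamma + 1 \leq \gamma_i$, and $\Img w \subseteq F_i$, whence $\supp w = cl(\Img w) \subseteq F_i$ since $F_i$ is closed. Therefore every word of length $< \gamma+1$ with all letters in $\supp w$ is a word of length $< \gamma_i$ with all letters in $F_i$, so ${(\supp w)}^{<\gamma+1} \subseteq F_i^{<\gamma_i} \subseteq \overline w$, the last inclusion holding because $\epsilon$ belongs to every $F_j^{<\gamma_j}$. Combining the two inclusions yields $\overline w = {(\supp w)}^{<\gamma+1}$.

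The only step requiring genuine care is the inductive collapsing argument, where one must check that the two factors of $F_1^{<\gamma_1} \cdot (F_2^{<\gamma_2} \cdots F_n^{<\gamma_n})$ are bona fide closed subsets of $X^{<\alpha}$ before invoking Lemma~\ref{lemma:indecomp:split}. This reduces to the observation that a prefix or suffix of an $\alpha$-product (viewed as a sequence of atoms) is again an $\alpha$-product — the defining constraints on the exponents (that $\gamma_i \leq \alpha$, and that $\gamma_i < \alpha$ for all but the last atom when $\alpha$ is decomposable) are inherited by both pieces — after which Proposition~\ref{prop:alpha:prod} supplies closedness. The remainder is routine manipulation of ordinals and of the definition of $F^{<\delta}$; note in particular that the proof does not need Lemma~\ref{lemma:indecomp:length}.
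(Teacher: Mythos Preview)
Your proof is correct and follows essentially the same approach as the paper's: both establish the easy inclusion $\overline w \subseteq (\supp w)^{<\gamma+1}$ by noting the right-hand side is closed and contains $w$, then use Theorem~\ref{thm:Xalpha} to write $\overline w$ as an $\alpha$-product $A_1\cdots A_n$, invoke Lemma~\ref{lemma:indecomp:split} to locate $w$ in a single atom $A_i = {F'}^{<\gamma'}$, and conclude $(\supp w)^{<\gamma+1} \subseteq {F'}^{<\gamma'} \subseteq \overline w$. The only difference is that you spell out the induction on $n$ (and the verification that prefixes and suffixes of an $\alpha$-product remain $\alpha$-products) where the paper simply writes ``using Lemma~\ref{lemma:indecomp:split}, $w$ is in some $A_i$''; your extra care here is appropriate but not a different idea.
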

\begin{proof}
  Let $F \eqdef \supp w$.  Clearly, $w$ is in $F^{<\gamma+1}$, so
  $\overline w \subseteq F^{<\gamma+1}$.  Conversely, $\overline w$ is
  irreducible closed, hence is an $\alpha$-product
  $P \eqdef A_1 \cdots A_n$, by Theorem~\ref{thm:Xalpha}.  Since
  $w \in P$ is topologically indecomposable hence non-empty,
  $n \geq 1$.  Using Lemma~\ref{lemma:indecomp:split},
  $w$ is in some $A_i$. Let us write
  $A_i$ as ${F'}^{<\gamma'}$.  Since $|w| = \gamma$, 
  $\gamma < \gamma'$.  Every letter of $w$ is in $F'$, so
  $\Img w \subseteq F'$, and taking closures,
  $F \subseteq F'$.  Then $F^{<\gamma+1} \subseteq {F'}^{<\gamma'}$,
  so $F^{<\gamma+1} \subseteq P = \overline w$.
\end{proof}

\begin{lemma}
  \label{lemma:cat:closed}
  Let $X$ be a Noetherian space, $\beta$ be an ordinal, and $\alpha$
  be $\omega^\beta$ or $\omega^\beta+1$.  The map $cat$ is closed and
  continuous from $X^{<\omega^\beta} \times X^{<\alpha}$ to
  $X^{<\alpha}$.
\end{lemma}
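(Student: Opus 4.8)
The plan is to dispatch well-definedness and continuity at once, and to obtain closedness by reducing to irreducible closed subsets of the domain and observing that these are ``rectangles''.

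First, $cat$ is well-defined: for $|u| < \omega^\beta$ and $|v| < \alpha$ one has $|uv| = |u| + |v| < \alpha$, by Lemma~\ref{lemma:bounds}~(1) when $\alpha = \omega^\beta$ (which is indecomposable) and by Lemma~\ref{lemma:bounds}~(2) when $\alpha = \omega^\beta+1$. Continuity requires nothing new: when $\alpha = \omega^\beta$ the map is the second map in Lemma~\ref{lemma:cat:cont}~(2), and when $\alpha = \omega^\beta+1$ we have $X^{<\alpha} = X^{\leq\omega^\beta}$, so it is the first map in Lemma~\ref{lemma:cat:cont}~(2).

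The substantial part is closedness. The space $X^{<\omega^\beta}\times X^{<\alpha}$ is Noetherian, being a product of Noetherian spaces (Theorem~\ref{thm:Xalpha} together with \cite[Section~9.7]{JGL-topology}); hence every closed subset $C$ is a finite union of irreducible closed subsets $C_1,\dots,C_k$. Since $cat$ commutes with finite unions and a finite union of closed sets is closed, it suffices to prove that $cat[C]$ is closed when $C$ is irreducible closed and non-empty. The crux, and the step I expect to need most care, is the following structural claim: every such $C$ equals $D_1\times D_2$, where $D_1 \eqdef \overline{\pi_1[C]}$ is irreducible closed in $X^{<\omega^\beta}$ and $D_2 \eqdef \overline{\pi_2[C]}$ is irreducible closed in $X^{<\alpha}$. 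The factors are irreducible closed because the projections are continuous and $C$ is irreducible, and $C\subseteq D_1\times D_2$ is immediate; for the reverse inclusion I would argue that if $U\times V$ is a basic open neighbourhood of a point of $D_1\times D_2$, then $U$ meets $\pi_1[C]$ and $V$ meets $\pi_2[C]$, so $C\cap(U\times X^{<\alpha})$ and $C\cap(X^{<\omega^\beta}\times V)$ are non-empty open subsets of the irreducible space $C$ and hence intersect, so that $U\times V$ meets $C$; as such neighbourhoods form a basis, $D_1\times D_2\subseteq\overline C = C$.

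Granting this, the remainder is routine. By Theorem~\ref{thm:Xalpha}, $D_1$ is an $\omega^\beta$-product and $D_2$ an $\alpha$-product, and $cat[C] = cat[D_1\times D_2] = D_1 D_2$. It then remains only to observe that the concatenation of an $\omega^\beta$-product with an $\alpha$-product is again an $\alpha$-product, hence closed by Proposition~\ref{prop:alpha:prod}: writing $D_1 = F_1^{<\gamma_1}\cdots F_p^{<\gamma_p}$ with each $\gamma_i\leq\omega^\beta$ and $D_2 = G_1^{<\delta_1}\cdots G_q^{<\delta_q}$ an $\alpha$-product, the product $F_1^{<\gamma_1}\cdots F_p^{<\gamma_p}G_1^{<\delta_1}\cdots G_q^{<\delta_q}$ has all exponents $\leq\alpha$, and when $\alpha = \omega^\beta+1$ is decomposable each $\gamma_i\leq\omega^\beta<\alpha$ while, by Definition~\ref{defn:alpha:prod}, each $\delta_j<\alpha$ except possibly $\delta_q$, so the only exponent that may equal $\alpha$ is the last one. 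This gives $cat[C] = D_1 D_2$ closed, and the lemma follows.
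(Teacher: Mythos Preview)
Your proof is correct and follows essentially the same approach as the paper's: reduce to showing that the concatenation of an $\omega^\beta$-product with an $\alpha$-product is an $\alpha$-product, hence closed. The paper's proof is terser and simply observes that closed subsets of the factors are finite unions of products, then writes $cat[\mathcal C_1 \times \mathcal C_2] = \bigcup_{i,j} P_i Q_j$; you are more careful in that you explicitly justify why every closed subset of the product space is a finite union of rectangles, via the standard irreducibility argument that an irreducible closed subset of a product equals the product of the closures of its projections.
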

\begin{proof}
  It is well-defined and continuous by Lemma~\ref{lemma:cat:cont}.  By
  Theorem~\ref{thm:Xalpha}, the closed subsets of $X^{<\omega^\beta}$
  (resp., $X^{<\alpha}$) are the finite unions of
  $\omega^\beta$-products (resp., $\alpha$-products).  Given any two
  such closed sets expressed as finite unions of such products
  $\mathcal C_1 \eqdef \bigcup_{i=1}^m P_i$ and
  $\mathcal C_2 \eqdef \bigcup_{j=1}^n Q_j$, the image
  $\mathcal C_1 \mathcal C_2$ of $\mathcal C_1 \times \mathcal C_2$ by
  $cat$ is the (finite) union over all $i$ and $j$ of the products
  $P_i Q_j$.
\end{proof}

\begin{lemma}
  \label{lemma:cl:cat}
  Let $X$ be a Noetherian space, and $\alpha$ be a bound. For all sets
  of transfinite words $A$ and $B$ such that
  $AB \subseteq X^{<\alpha}$, $cl (AB)=cl (A) cl (B)$.
\end{lemma}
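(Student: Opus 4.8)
The plan is to obtain both inclusions $cl(AB)\subseteq cl(A)\,cl(B)$ and $cl(A)\,cl(B)\subseteq cl(AB)$ from a single input: that $cat$ is at once \emph{continuous} and \emph{closed} (Lemma~\ref{lemma:cat:closed}), together with the elementary fact that in a product space the closure of a product of two subsets is the product of their closures. The only genuine subtlety is to apply $cat$ on the correct domain.

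First I would dispose of the degenerate cases. If $A=\emptyset$ or $B=\emptyset$, then $AB=\emptyset=cl(A)\,cl(B)$ and $cl(AB)=\emptyset$. If $A=\{\epsilon\}$, then $AB=B$ and $cl(A)=\{\epsilon\}$ (since $\{\epsilon\}=\varepsilon$ is a transfinite product, hence closed in $X^{<\alpha}$), so $cl(A)\,cl(B)=cl(B)=cl(AB)$; symmetrically if $B=\{\epsilon\}$. So assume $A,B\neq\emptyset$ and $A,B\neq\{\epsilon\}$; then $B$ contains a word $v_1$ with $|v_1|\geq 1$, and $A\subseteq X^{<\alpha}$ (as $B\neq\emptyset$). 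Writing the bound $\alpha$ as $\omega^\delta$ or $\omega^\delta+1$, I claim $A\subseteq X^{<\omega^\delta}$: for $u\in A$ we have $uv_1\in AB\subseteq X^{<\alpha}$, so $|u|+|v_1|<\alpha$; if $\alpha=\omega^\delta$ this already forces $|u|<\omega^\delta$, while if $\alpha=\omega^\delta+1$, then $|u|+|v_1|\leq\omega^\delta$ together with $|v_1|\geq 1$ rules out $|u|=\omega^\delta$, and since $|u|\leq\omega^\delta$ anyway we again get $|u|<\omega^\delta$. Now $X^{<\omega^\delta}$ is a transfinite product, hence closed in $X^{<\alpha}$, and its regular subword topology is the subspace topology it inherits from $X^{<\alpha}$ (a transfinite product meets $X^{<\omega^\delta}$ in the same set whether or not one first intersects it with $X^{<\alpha}$; cf.\ Proposition~\ref{prop:subword:topo}). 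Consequently $cl(A)$, computed in $X^{<\alpha}$, lies in $X^{<\omega^\delta}$ and equals the closure of $A$ taken in $X^{<\omega^\delta}$; and of course $cl(B)\subseteq X^{<\alpha}$.

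Next I would invoke Lemma~\ref{lemma:cat:closed} with $\omega^\delta$ in the role of $\omega^\beta$: $cat\colon X^{<\omega^\delta}\times X^{<\alpha}\to X^{<\alpha}$ is continuous and closed. For the first inclusion, $cl(A)\times cl(B)$ is a closed subset of $X^{<\omega^\delta}\times X^{<\alpha}$, so its $cat$-image $cl(A)\,cl(B)$ is closed in $X^{<\alpha}$, and since it contains $AB$ it contains $cl(AB)$. For the second, $cat^{-1}(cl(AB))$ is closed in $X^{<\omega^\delta}\times X^{<\alpha}$ and contains $A\times B$, hence it contains the closure of $A\times B$, which is $cl(A)\times cl(B)$; applying $cat$ gives $cl(A)\,cl(B)\subseteq cl(AB)$. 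Combining the two yields $cl(AB)=cl(A)\,cl(B)$. (For the first inclusion one could also avoid closedness of $cat$: expand $cl(A)=\bigcup_i P_i$ and $cl(B)=\bigcup_j Q_j$ as finite unions of $\alpha$-products via Theorem~\ref{thm:Xalpha}, note each $P_iQ_j$ is a transfinite product contained in $X^{<\alpha}$ by Lemma~\ref{lemma:bounds}, hence closed, and conclude.)

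The step I expect to need the most care is the domain issue. One cannot run this argument with $cat$ on $X^{<\alpha}\times X^{<\alpha}$: when $\alpha=\omega^\delta+1$, the concatenation of two words of length $\omega^\delta$ already escapes $X^{<\alpha}$, so the left factor genuinely has to be the smaller space $X^{<\omega^\delta}$. The length bookkeeping above is precisely what legitimizes replacing $A$ by a subset of $X^{<\omega^\delta}$, and it is exactly when one of $A$, $B$ collapses to $\{\epsilon\}$ or $\emptyset$ that this bookkeeping would fail --- which is why those cases are peeled off at the start. The identification of the regular subword topology on $X^{<\omega^\delta}$ with the subspace topology from $X^{<\alpha}$ is routine, but it should be recorded, since it is what makes the symbol $cl(A)$ unambiguous and Lemma~\ref{lemma:cat:closed} directly applicable.
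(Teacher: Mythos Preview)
Your proof is correct and follows essentially the same route as the paper's: both inclusions come from the fact that $cat$ is continuous and closed (Lemma~\ref{lemma:cat:closed}), and the only issue is ensuring $A$ sits in the smaller domain $X^{<\omega^\delta}$ when $\alpha=\omega^\delta+1$, which you handle by the same length argument. The paper's version is a bit terser---it splits directly on whether $\alpha$ is indecomposable (using $cat$ on $X^{<\alpha}\times X^{<\alpha}$) or $\alpha=\omega^\beta+1$ (then either $B\subseteq\{\epsilon\}$ or $A\subseteq X^{<\omega^\beta}$)---whereas you treat both cases uniformly and are more explicit about why the closure of $A$ is unambiguous across the two ambient spaces; but the content is the same.
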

\begin{proof}
  We use Lemma~\ref{lemma:cat:closed}.  If $\alpha$ is indecomposable,
  $cat$ is continuous from $X^{<\alpha} \times X^{<\alpha}$ to
  $X^{<\alpha}$, so $cl (A) cl (B) \subseteq cl (AB)$; $cat$ is
  closed, so $cl (A) cl (B)$ is closed, and contains $AB$, so it
  contains $cl (AB)$.

  If $\alpha = \omega^\beta+1$, then either
  $B \subseteq \{\epsilon\}$, in which case
  $cl (AB) = cl (A) = cl (A) cl (B)$; or
  $A \subseteq X^{<\omega^\beta}$ (else we could pick $u \in A$ of
  length $\omega^\beta$, $v \neq \epsilon$ in $B$, and then
  $uv \in AB$ would not be in $X^{<\alpha}$), then we reason as above,
  using the fact that $cat$ is closed and continuous from
  $X^{<\omega^\beta} \times X^{<\alpha}$ to $X^{<\alpha}$.
\end{proof}

\begin{proposition}
  \label{prop:cl:cat}
  For every Noetherian space $X$ and every bound $\alpha$, the regular
  subword topology on $X^{<\alpha}$ is the coarsest one such that
  $F^{<\gamma}$ is closed for every closed subset $F$ of $X$ and every
  ordinal $\gamma \leq \alpha$, and such that
  $\mathcal C_1 \mathcal C_2$ is closed for all closed subsets
  $\mathcal C_1$ and $\mathcal C_2$ such that
  $\mathcal C_1 \mathcal C_2 \subseteq X^{<\alpha}$.
\end{proposition}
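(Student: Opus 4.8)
Let us write $\tau$ for the regular subword topology on $X^{<\alpha}$, and read the two defining properties of the statement as schemas on a topology $T$ on $X^{<\alpha}$: \emph{(a)} $F^{<\gamma}$ is $T$-closed for every closed subset $F$ of $X$ and every ordinal $\gamma \leq \alpha$; \emph{(b)} $\mathcal C_1 \mathcal C_2$ is $T$-closed whenever $\mathcal C_1,\mathcal C_2$ are $T$-closed and $\mathcal C_1 \mathcal C_2 \subseteq X^{<\alpha}$. Let $\tau'$ denote the coarsest topology on $X^{<\alpha}$ satisfying (a) and (b). The plan is to prove $\tau = \tau'$. I would first note that $\tau'$ exists and itself has (a) and (b): both properties are preserved under arbitrary intersections of topologies, since a subset closed in every member of a family of topologies is closed in their intersection, which is again a topology; so $\tau'$ is simply the intersection of all topologies on $X^{<\alpha}$ satisfying (a) and (b).

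For the inclusion $\tau' \subseteq \tau$, it suffices to check that $\tau$ has properties (a) and (b). Property (a) is immediate from Definition~\ref{defn:subword:topo}: when $\gamma \leq \alpha$ one has $F^{<\gamma} \subseteq X^{<\alpha}$, so $F^{<\gamma} = F^{<\gamma} \cap X^{<\alpha}$ is one of the generating closed sets (the case $n = 1$). For property (b), let $\mathcal C_1, \mathcal C_2$ be $\tau$-closed with $\mathcal C_1 \mathcal C_2 \subseteq X^{<\alpha}$; since $\alpha$ is a bound, Lemma~\ref{lemma:cl:cat} gives $cl(\mathcal C_1 \mathcal C_2) = cl(\mathcal C_1)\, cl(\mathcal C_2) = \mathcal C_1 \mathcal C_2$, so $\mathcal C_1 \mathcal C_2$ is $\tau$-closed. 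Hence $\tau$ is one of the topologies whose intersection defines $\tau'$, whence $\tau' \subseteq \tau$.

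For the reverse inclusion $\tau \subseteq \tau'$, I would show that every $\alpha$-product is $\tau'$-closed; this is enough because the $\alpha$-products form a subbase of closed sets for $\tau$ by Proposition~\ref{prop:alpha:prod}, and the $\tau'$-closed sets are stable under finite unions and arbitrary intersections. Fix an $\alpha$-product $P = F_1^{<\gamma_1} \cdots F_n^{<\gamma_n}$. If $n = 0$, then $P = \varepsilon = \emptyset^{<1}$ is $\tau'$-closed by (a). If $n \geq 1$, then each $\gamma_i \leq \alpha$, so each single factor $F_i^{<\gamma_i} \subseteq X^{<\alpha}$ is $\tau'$-closed by (a); I would then prove by descending induction on $i$ that the suffix product $F_i^{<\gamma_i} \cdots F_n^{<\gamma_n}$ is $\tau'$-closed. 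Any such suffix is again an $\alpha$-product (all its exponents are $\leq \alpha$, and when $\alpha$ is decomposable the clause of Definition~\ref{defn:alpha:prod} forbidding an exponent equal to $\alpha$ except in last position is inherited from $P$), hence it is contained in $X^{<\alpha}$ by Proposition~\ref{prop:alpha:prod}; applying (b) to $F_i^{<\gamma_i}$ and $F_{i+1}^{<\gamma_{i+1}} \cdots F_n^{<\gamma_n}$, both $\tau'$-closed (the latter by the induction hypothesis), shows that their concatenation $F_i^{<\gamma_i}\cdots F_n^{<\gamma_n}$ is $\tau'$-closed. Taking $i = 1$ gives that $P$ is $\tau'$-closed. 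Hence $\tau \subseteq \tau'$, and therefore $\tau = \tau'$.

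The only point requiring genuine attention is the descending induction: condition (b) may be invoked only for a concatenation that already lies inside $X^{<\alpha}$, which is exactly why it matters that every suffix of an $\alpha$-product is again an $\alpha$-product (so that Proposition~\ref{prop:alpha:prod} applies), with the decomposable case of Definition~\ref{defn:alpha:prod} being the single thing to verify carefully. Everything else amounts to keeping track of which of the two topologies the word ``closed'' refers to.
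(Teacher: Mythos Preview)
Your proof is correct and follows essentially the same route as the paper's: show that the regular subword topology satisfies (a) and (b) (using Lemma~\ref{lemma:cl:cat} for (b)), and that any topology satisfying (a) and (b) makes every $\alpha$-product closed, hence is finer. The paper compresses the second half into a single sentence, whereas you spell out the descending induction on suffixes and the check that each suffix is again an $\alpha$-product so that (b) can be invoked; this is exactly the detail the paper leaves implicit.
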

\begin{proof}
  Let us call admissible any topology $\tau$ containing the sets
  $F^{<\gamma}$ as closed sets, and closed under concatenations
  $\mathcal C_1 \mathcal C_2$, as described above.  The regular
  subword topology is admissible, since
  $\mathcal C_1 \mathcal C_2 = cl (\mathcal C_1) cl (\mathcal C_2) =
  cl (\mathcal C_1 \mathcal C_2)$, by Lemma~\ref{lemma:cl:cat}.  Given
  any admissible topology $\tau$, we see that the $\alpha$-products
  are closed in $\tau$, so $\tau$ is finer than the regular subword
  topology.
\end{proof}

There is a full, continuous map $\eta_Y \colon y \mapsto \dc y$ from
any space $Y$ to $\Sober Y$.  
The specialization preordering $\leq$ on $Y$
is characterized by $y \leq y'$ if and only if $y \in \eta_Y (y')$.
We note that $\eta_{X^{<\alpha}} (w)$ is simply $\overline w$.

\begin{thm}
  \label{thm:spec}
  Let $X$ be a Noetherian space, $\alpha$ be a bound.  Let $\ls$ be
  the specialization preordering of $X^{<\alpha}$.
  \begin{enumerate}
  \item For every $w \in X^{<\alpha}$, one can write $w$ as a finite
    concatenation of topologically indecomposable words
    $w_1 \cdots w_n$, and then
    $\eta_{X^{<\alpha}} (w) = \overline w = \overline {w_1} \cdots
    \overline {w_n}$, and
    $\overline {w_i} = {(\supp {w_i})}^{<|w_i|+1}$ for every $i$,
    $1\leq i\leq n$.
  \item For all transfinite words $w \eqdef w_1 \cdots w_m$ and
    $w' \eqdef w'_1 \cdots w'_n$ written as finite concatenations of
    topologically indecomposable words, $w \ls w'$ if and only if
    there are indices
    $0=i_0 \leq i_1 \leq \cdots \leq i_{n-1} \leq i_n = m$ such that
    for every $j$ with $1\leq j \leq n$,
    $\bigcup_{i=i_{j-1}+1}^{i_j} \supp w_i \subseteq \supp {w'_j}$ and
    $|w_{i_{j-1}+1}|+\cdots +|w_{i_j}| \leq |w'_j|$.
    %
  \end{enumerate}
\end{thm}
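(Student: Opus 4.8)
The plan is to reduce both items to four ingredients: the description of the closure of a topologically indecomposable word (Lemma~\ref{lemma:indecomp:cl}), the decomposition of an arbitrary word into topologically indecomposable factors (Lemma~\ref{lemma:indecomp:decomp}), the multiplicativity of closure $cl (AB) = cl (A)\, cl (B)$ (Lemma~\ref{lemma:cl:cat}), and the two facts recorded just before the statement, namely $\eta_{X^{<\alpha}} (w) = \overline w$ and $w \ls w'$ if and only if $w \in \eta_{X^{<\alpha}} (w')$.

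Item~(1) is then routine: Lemma~\ref{lemma:indecomp:decomp} gives a factorization $w = w_1 \cdots w_n$ into topologically indecomposable words, every prefix $w_1 \cdots w_k$ of $w$ lies in $X^{<\alpha}$, so $n-1$ applications of Lemma~\ref{lemma:cl:cat} to $\{w_1\} \cdots \{w_n\} = \{w\}$ give $\overline w = \overline{w_1} \cdots \overline{w_n}$, and $\overline{w_i} = {(\supp{w_i})}^{<|w_i|+1}$ is Lemma~\ref{lemma:indecomp:cl}. I record that the product formula $\overline w = \overline{w_1} \cdots \overline{w_n}$ holds for \emph{any} factorization of $w$, since Lemma~\ref{lemma:cl:cat} does not use topological indecomposability; this is needed below. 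The ``if'' direction of item~(2) is equally direct: with $v_j \eqdef w_{i_{j-1}+1} \cdots w_{i_j}$, the stated hypotheses say precisely that the letters of $v_j$ lie in $\supp{w'_j}$ (using $\Img w_i \subseteq \supp{w_i}$) and that $|v_j| = |w_{i_{j-1}+1}| + \cdots + |w_{i_j}| \leq |w'_j|$, so $v_j \in {(\supp{w'_j})}^{<|w'_j|+1} = \overline{w'_j}$; hence $w = v_1 \cdots v_n \in \overline{w'_1} \cdots \overline{w'_n} = \overline{w'}$, i.e.\ $w \ls w'$.

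The substance is the ``only if'' direction, which I would prove by induction on $n$. The base case $n = 0$ forces $w' = \epsilon$ and $m = 0$. For $n \geq 1$, factor $w = a c$ with $a \in \overline{w'_1}$ and $c \in \mathcal C \eqdef \overline{w'_2} \cdots \overline{w'_n} = \overline{w'_2 \cdots w'_n}$, and let $i_1$ be largest with $|w_1 \cdots w_{i_1}| \leq |a|$. If $i_1 = m$, then $a = w$, so all letters of $w$ are in $\supp{w'_1}$ and $|w| \leq |w'_1|$, and $i_1 = \cdots = i_n \eqdef m$ works (first block all of $w$, the rest empty). Otherwise the cut falls strictly inside the topologically indecomposable factor $w_{i_1+1}$: we have $w_{i_1+1} = a' a''$ with $a'' \neq \epsilon$, $a = w_1 \cdots w_{i_1} a'$, and $c = a'' w_{i_1+2} \cdots w_m$. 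The crux is that $w_{i_1+1}$ may be moved wholesale across the cut, from $a$ into $\mathcal C$. Indeed, since $a'' \neq \epsilon$, topological indecomposability of $w_{i_1+1}$ gives $\overline{w_{i_1+1}} = \overline{a''}$; the map $u \mapsto u\, (w_{i_1+2} \cdots w_m)$ is continuous on the appropriate domain by Lemma~\ref{lemma:cat:cont}~(2), so its preimage of the closed set $\mathcal C$ is closed, contains $a''$, hence contains $\overline{a''} \ni w_{i_1+1}$, whence $w_{i_1+1} w_{i_1+2} \cdots w_m \in \mathcal C$; and $w_1 \cdots w_{i_1}$ is a prefix of $a$, hence in $\overline{w'_1}$ by Lemma~\ref{lemma:prod}. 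This produces the ``clean'' factorization $w = (w_1 \cdots w_{i_1})(w_{i_1+1} \cdots w_m)$ with first factor in $\overline{w'_1}$ and second in $\overline{w'_2 \cdots w'_n}$, both written as concatenations of topologically indecomposable factors; the induction hypothesis applied to the second factor yields indices $i_1 \leq i_2 \leq \cdots \leq i_n = m$ realizing blocks $2, \dots, n$, and the block $\{1, \dots, i_1\}$ handles $w'_1$.

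I expect the one genuinely delicate point to be this ``moving'' step, because Lemma~\ref{lemma:cat:cont}~(2) only provides continuity of $cat$ on $X^{<\omega^\beta} \times X^{\leq\omega^\beta}$: when $\alpha = \omega^\beta+1$ and $|a''| = \omega^\beta$, the word $a''$ does not lie in $X^{<\omega^\beta}$ and the argument above does not apply as stated. A short length computation shows that this can only happen when $w_{i_1+2} \cdots w_m = \epsilon$, in which case one directly gets $w_{i_1+1} \in \overline{w_{i_1+1}} = \overline{a''} \subseteq \mathcal C$ from the fact that $\mathcal C$ is closed and contains $a''$. Disposing of this edge case, and noting that $X^{<\omega^\beta}$ is closed in $X^{<\alpha}$ so that the closures of $a''$ computed in the two spaces agree, are the only points requiring care; everything else is bookkeeping about consecutive blocks of indices.
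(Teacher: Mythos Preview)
Your proof is correct and follows essentially the same strategy as the paper's. The only organizational difference is in the ``only if'' direction of~(2): the paper first isolates, via Lemma~\ref{lemma:indecomp:split}, the general fact that a word $w_1\cdots w_m$ in a product $\mathcal C_1\mathcal C_2$ of closed sets can always be split at some index $i$ with $w_1\cdots w_i\in\mathcal C_1$ and $w_{i+1}\cdots w_m\in\mathcal C_2$, and then iterates; you instead perform a direct induction on $n$, using the defining property $\overline{a''}=\overline{w_{i_1+1}}$ to slide the straddling factor across the cut. Both arguments hinge on the same continuity of $cat$ to pull back closed sets, and your handling of the edge case $\alpha=\omega^\beta+1$, $|a''|=\omega^\beta$ is in fact more explicit than the paper's.
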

\begin{proof}
  (1) We write $w$ as $w_1 \cdots w_n$ where each $w_i$ is
  topologically indecomposable by Lemma~\ref{lemma:indecomp:decomp}.
  By Lemma~\ref{lemma:cl:cat} (and since
  $\overline\epsilon = \varepsilon$ in the base case $n=0$),
  $\overline w = \overline {w_1} \cdots \overline {w_n}$.  Finally,
  $\overline {w_i} = {(\supp {w_i})}^{<|w_i|+1}$ by
  Lemma~\ref{lemma:indecomp:cl}.

  (2) Lemma~\ref{lemma:indecomp:split} has the following consequence.
  Given any closed set of the form $\mathcal C_1 \mathcal C_2$ with
  $\mathcal C_1$ and $\mathcal C_2$ closed, for every transfinite word
  $w \eqdef w_1 \cdots w_m$ in $\mathcal C_1 \mathcal C_2$, written as
  a finite concatenation of topologically indecomposable words, there
  is an index $i$ with $0\leq i\leq m$ such that
  $w_1 \cdots w_i \in \mathcal C_1$ and
  $w_{i+1} \cdots w_m \in \mathcal C_2$.  Indeed, since
  $w \in \mathcal C_1 \mathcal C_2$, there is an index $j$ with
  $1\leq j \leq m$ such that one can write $w_j$ as $uv$, and
  $w_1 \cdots w_{j-1} u \in \mathcal C_1$,
  $v w_j \cdots w_m \in \mathcal C_2$.  Since $cat$ is continuous
  (Lemma~\ref{lemma:cat:cont}~(2)),
  $\mathcal C'_1 \eqdef cat (w_1 \cdots w_{j-1}, \_)^{-1} (\mathcal
  C_1)$ and
  $\mathcal C'_2 \eqdef cat (\_, w_j \cdots w_m)^{-1} (\mathcal C_2)$
  are closed.  Now $w_j = uv \in \mathcal C'_1 \mathcal C'_2$, so
  $w_j$ is in $\mathcal C'_1$ or in $\mathcal C'_2$; we define $i$ as
  $j$ in the first case, as $j-1$ in the second case.

  By induction on $n$, if $w$ belongs to a finite product
  $\mathcal C_1 \cdots \mathcal C_n$ of closed subsets of
  $X^{<\alpha}$ included in $X^{<\alpha}$, we can find indices
  $0=i_0 \leq i_1 \leq \cdots \leq i_{n-1} \leq i_n = m$ such that
  $w_{i_{j-1}+1} \cdots w_{i_j} \in \mathcal C_j$ for every $j$ with
  $1\leq j\leq n$.

  Let us assume that $w \ls w'$, namely
  $\overline w \subseteq \overline {w'}$.  By (1),
  $\overline {w_1} \cdots \overline {w_m} \subseteq \overline {w'_1}
  \cdots \overline {w'_n}$.  Letting
  $\mathcal C_j \eqdef \overline {w'_j}$, we obtain indices
  $0=i_0 \leq i_1 \leq \cdots \leq i_{n-1} \leq i_n = m$ such that
  $w_{i_{j-1}+1} \cdots w_{i_j} \in \overline {w'_j}$ for each $j$.
  Since $w'_j \in X^{<|w'_j|+1}$,
  $\overline {w'_j} \subseteq X^{<|w'_j|+1}$, so
  $|w_{i_{j-1}+1}|+\cdots +|w_{i_j}| = |w_{i_{j-1}+1} \cdots w_{i_j}|
  \leq |w'_j|$.  Also, for every $i$ with $i_{j-1}+1 \leq i \leq i_j$,
  $w_i \leq_* w_{i_{j-1}+1} \cdots w_{i_j}$, so $w_i$ is in
  $\overline {w'_j}$ by Lemma~\ref{lemma:prod}; hence
  $\supp {w_i} \subseteq \supp {w'_j}$, using
  Lemma~\ref{lemma:indecomp:cl} and Lemma~\ref{lemma:red:1}~(2).

  Conversely, if
  $\bigcup_{i=i_{j-1}+1}^{i_j} \supp w_i \subseteq \supp {w'_j}$ and
  $|w_{i_{j-1}+1}|+\cdots +|w_{i_j}| \leq |w'_j|$ for every $j$, then
  $w_{i_{j-1}+1} \cdots w_{i_j}$ is in
  ${(\supp {w'_j})}^{<|w'_j|+1} = \overline {w'_j}$ (by
  Lemma~\ref{lemma:indecomp:cl}); so $w = w_1 \cdots w_m$ is in
  $\overline {w'_1} \cdots \overline {w'_n} = \overline {w'}$, by (1).
\end{proof}
How does this compare to the subword ordering $\leq_*$?  By
Lemma~\ref{lemma:prod}, $w \leq_* w'$ implies $w \ls w'$.  The
converse holds on $X^* = X^{<\omega}$
\cite[Exercise~9.7.29]{JGL-topology}, and on $X^{<\omega+1}$ if $X$ is
a wqo in its Alexandroff topology
\cite[Proposition~5.16]{JGL:noethinf}.  Otherwise, the result may
fail, as the following demonstrates.
\begin{example}
  \label{ex:Xomega:ls}
  Let $X$ be $\nat$ with the cofinite topology.  Since its non-trivial
  closed subsets are finite, $X$ is Noetherian.
  In order to see that $\ls$ and $\leq_*$ differ, let
  $w \eqdef 0\;1\;2\; \cdots$.  For every non-empty suffix $v$ of $w$,
  $\Img v$ is infinite, so $\supp v = \nat$.  Hence $w$ is
  topologically indecomposable.  By Theorem~\ref{thm:spec}~(1),
  $\overline w=X^{<\omega+1}$, so $w' \ls w$ for every
  $w' \in X^{<\omega+1}$.  Hence, for
  example, $0^\omega \ls w$, but $0^\omega \not\leq_* w$.
\end{example}
For spaces $X^{<\alpha}$ with $\alpha \geq \omega^2$, we have the
following.
\begin{example}
  \label{ex:Xomega2:ls}
  Let $X$ be any Noetherian space with two elements $a$ and $b$ such
  that $a \not\leq b$ (say, $\{a, b\}$ with the discrete topology).
  We claim that $\ls$ and $\leq_*$ differ on $X^{<\alpha}$, for any
  bound $\alpha \geq \omega^2$.  Let $w' \eqdef (ab^\omega)^\omega$
  (explicitly, $w' \colon \omega^2 \to X$, $w' (\omega.m+n) \eqdef a$
  if $n=0$, $b$ otherwise); $w'$ is indecomposable, hence
  topologically indecomposable by Lemma~\ref{lemma:indecomp}, and
  $\supp {w'} = \dc \{a, b\}$.  Let $w \eqdef a^{\omega^2}$.  This is
  also indecomposable, and $\supp w = \dc a$.  Hence, by
  Theorem~\ref{thm:spec}, $w \ls w'$.  However, $w \not\leq_* w'$,
  since there is no strictly increasing map from
  $\omega^2$ 
  into $\omega$, the subset of positions of $w'$ where $a$ occurs.
\end{example}

\section{Conclusion and open problems}
\label{sec:concl-open-probl}

We have described a Noetherian topology on spaces of transfinite words
over a Noetherian space $X$, in particular on spaces $X^{<\alpha}$,
where $\alpha$ is a bound. In the latter situation, we have
characterized its irreducible closed subsets, and given upper bounds
on the stature and reduced sobrification rank of $X^{<\alpha}$. We have also
characterized the specialization preordering $\ls$ of $X^{<\alpha}$.

Although we have not stressed it, the syntax of $\alpha$-products
naturally yields an S-representation of $X^{<\alpha}$, in the sense of
\cite{FGL:compl,JGL:noethinf}, provided we restrict our bounds to lie
in some class of ordinals with a computable representation, decidable
ordering, and decidable equality.  S-representations are important in
forward analysis procedures for well-structured transition systems
\cite{FG-lmcs12}.

We finish with two questions.  First, it is frustrating that $\ls$ and
$\leq_*$ differ in general.  Is there a natural, finer
Noetherian topology on $X^{<\alpha}$ that 
would have $\leq_*$ as specialization preordering?  The specialization
preordering of a Noetherian space is necessarily well-founded.  Hence,
if the desired topology exists, $X^{<\alpha}$ is well-founded under
$\leq_*$.  By an argument similar to
Lemma~\ref{lemma:indecomp:decomp}, every non-empty word
$w \in X^{<\alpha}$ would have an indecomposable suffix, in other
words $X$ would have to be $\beta$-better-quasi-ordered in the sense
of \cite[Definition~IV-2]{Pouzet:pmo} or of \cite[Chapter~8,
5.1]{Fraisse:relations:I}, 
for every $\beta < \alpha$.

Second, what is the exact stature of $X^{<\alpha}$? its reduced
sobrification rank?

\normalsize
\bibliographystyle{abbrv}
\ifarxiv

\else%
\bibliography{noeth}
\fi%















\end{document}